\theoremstyle{plain}
\newtheorem{theorem}{Theorem}[section]
\newtheorem{lemma}[theorem]{Lemma}
\newtheorem{proposition}[theorem]{Proposition}
\newtheorem{corollary}[theorem]{Corollary}
\theoremstyle{definition}
\newtheorem{definition}[theorem]{Definition}
\newtheorem{example}[theorem]{Example}
\newtheorem{remark}[theorem]{Remark}
\newcommand{\be}{\begin{equation}}
\newcommand{\ee}{\end{equation}}
\newcommand{\bes}{\begin{equation*}}
\newcommand{\ees}{\end{equation*}}
\newcommand{\cE}{\mathcal{E}}
\newcommand{\cH}{\mathcal{H}}
\newcommand{\cK}{\mathcal{K}}
\newcommand{\cB}{\mathcal{B}}
\newcommand{\cM}{\mathcal{M}}
\newcommand{\cN}{\mathcal{N}}
\newcommand{\cA}{\mathcal{A}}
\newcommand{\cR}{\mathcal{R}}
\newcommand{\cO}{\mathcal{O}}
\newcommand{\cT}{\mathcal{T}}
\newcommand{\cC}{\mathcal{C}}
\newcommand{\mb}[1]{\mathbb{#1}}
\newcommand{\alg}{\operatorname{Alg}}
\newcommand{\Aut}{\operatorname{Aut}}
\newcommand{\Mult}{\operatorname{Mult}}
\newcommand{\spn}{\operatorname{span}}
\newcommand{\id}{\operatorname{id}}
\newcommand{\rep}{\operatorname{rep}}
\newcommand{\fA}{\mathfrak A}
\newcommand{\fs}{\mathfrak s}
\newcommand{\ft}{\mathfrak t}
\begin{document}

\title[Dynamics and automorphisms of free products]{Automorphisms of free products and their application to multivariable dynamics}
\author{Christopher Ramsey}
\address{University of Virginia, Charlottesville, VA, USA}
\email{cir6d@virginia.edu}

\thanks{The author is partially supported by NSERC, Canada.}

\begin{abstract}
We examine the completely isometric automorphisms of a free product of noncommutative disc algebras. It will be established that such an automorphism is given simply by a completely isometric automorphism of each component of the free product and a permutation of the components. This mirrors a similar fact in topology concerning biholomorphic automorphisms of product spaces with nice boundaries due to Rudin, Ligocka and Tsyganov. 

This paper is also a study of multivariable dynamical systems by their semicrossed product algebras. A new form of dynamical system conjugacy is introduced and is shown to completely characterize the semicrossed product algebra. This is proven by using the rigidity of free product automorphisms established in the first part of the paper.

Lastly, a representation theory is developed to determine when the semicrossed product algebra and the tensor algebra of a dynamical system are completely isometrically isomorphic. 
\end{abstract}

\subjclass[2010]{47A45, 47L55, 37B99, 46L40}
\keywords{semicrossed product, dynamical system, free product, automorphisms}

\maketitle

\section{Introduction}\label{sec:intro}

In this paper we wish to study the completely isometric automorphisms of a free product of noncommutative disc algebras amalgamated over the identity. In particular it will be established that such automorphisms have a rigid structure. This will tell us that one can think of such a free product as the appropriate non-commutative analogue of an algebra of holomorphic functions.

The noncommutative disc algebra, $\fA_n$, introduced by Popescu \cite{Pop0}, is the universal operator algebra generated by a row contraction,  $(\fs_1,\cdots, \fs_n)$ such that $\sum_{i=1}^n \fs_i\fs_i^* \leq I$, and the identity. $\fA_1 = A(\mb D)$, the disc algebra, and for $n\geq 2$ this should be thought of as a multivariable noncommutative generalization of the disc algebra. This algebra is also completely isometrically isomorphic to the norm closed algebra generated by the left creation operators and the identity on the full Fock space. 

Popescu generalized this from one row contraction to several which he called a sequence of contractive operators, namely $(\fs_{i,1}, \cdots, \fs_{i,n_i}), 1\leq i \leq m$ with each tuple a row contraction \cite{Pop1, Pop2}. In this paper, such an object will be called a {\bf set of row contractions} since the author believes this to be clearer. Popescu goes on to prove that the universal operator algebra generated by a set of row contractions and the identity is the free product of $\fA_{n_1}, \cdots, \fA_{n_m}$ amalgamated over the identity. This algebra is denoted $*_{i=1}^m \fA_{n_i}$ or $\fA_{n}^{*m}$ if they all have the same size.
He also shows that these free products are completely isometrically isomorphic to some universal operator algebras studied by Blecher and Paulsen \cite{BlechPaul, Blech} which they denote $OA(\Lambda, \cR)$.

It will be established that every completely isometric automorphism of such a free product of $\fA_n$ is induced by a biholomorphism of the character space. Thus, such an automorphism will be given by a completely isometric automorphism of each noncommutative disc algebra and a permutation of these algebras. This is due to Rudin, Ligocka and Tsyganov \cite{Rudin, Lig, Tsyganov}, that a biholomorphism of a product space is just a product of biholomorphisms in each space and a permutation of the spaces. This will all be shown in Section 2.

%%%%%%

The remainder of the paper concerns multivariable dynamics and operator algebras.
Specifically, a multivariable dynamical system $(X,\sigma)$ is a locally compact Hausdorff space $X$ along with $n$ proper continuous maps $\sigma = (\sigma_1,\cdots, \sigma_n)$ of $X$ into itself. To such a system one can construct two non-selfadjoint operator algebras. The first being the {\bf semicrossed product algebra} $C_0(X) \times_\sigma \mb F_n^+$ which is the universal operator algebra generated by $C_0(X)$ and $\mathfrak s_iC_0(X), 1\leq i\leq n$, where the operators $\mathfrak s_1,\cdots, \mathfrak s_n$ are contractions and satisfy the following covariance relations
\[
f \mathfrak s_i = \mathfrak s_i (f\circ\sigma_i) \ \ {\rm for } \ \ f\in C_0(X) \ \ {\rm and} \ \ 1\leq i\leq n.
\]
Similarly define the {\bf tensor algebra} to be the same as above but with the added condition that the generators are row contractive $\|[\mathfrak s_1 \cdots \mathfrak s_n]\| \leq 1$. These algebras should be thought of as encoding an action of the free semigroup algebra $\mb F_n^+$ on $C_0(X)$. If $n=1$ these algebras are the same and it has been proven that two such algebras are algebraically isomorphic if and only if their associated dynamical systems are conjugate, there exists a homeomorphism between the spaces intertwining the maps. This study was first initiated by Arveson \cite{Arv1} in 1967, later taken up in \cite{ArvJos, Peters, HadHoo} with the introduction of the semicrossed product given by Peters and finally proven in full generality by Davidson and Katsoulis \cite{DavKat1} in 2008.

For the multivariable case, Davidson and Katsoulis  in \cite{DavKat2} introduced a new notion of conjugacy. We say that two dynamical systems $(X,\sigma)$ and $(Y,\tau)$ are {\bf piecewise conjugate} if there is a homeomorphism $\gamma: X \rightarrow Y$ and an open cover $\{V_\alpha : \alpha\in S_n\}$ such that $\gamma \circ\tau_i \circ\gamma^{-1}|_{V_\alpha} = \sigma_{\alpha(i)}|_{V_\alpha}$. They went on to show that if the tensor or semicrossed product algebras of two systems are isomorphic then the dynamical systems are piecewise conjugate. 
A converse to this was then established by them for the tensor algebra case in many contexts, $X$ is totally disconnected or $n\leq 3$ for instance (the $n=4$ case was established later in \cite{Ramsey}), most of which gave that two algebras being algebraically isomorphic was enough to produce a completely isometric isomorphism between them. For the semicrossed product case it was shown that the converse fails as in the following example.

\begin{example}{\cite[Example 3.24]{DavKat2}}
Let $X = \{1,2\}$ and consider the following self maps on $X$:
\[
\sigma_1 = \id, \ \ \sigma_2(1) = 2, \sigma_2(2) = 1,  \ \ \tau_1(x) = 1, \ \ \tau_2(x) = 2.
\]
Hence, $(X,\sigma)$ and $(X,\tau)$ are piecewise conjugate. With some work they show that $C^*_e(C(X) \times_\sigma \mathbb F^2_+) = M_2(C^*(\mathbb F^3))$, while $C^*_e(C(X) \times_\tau \mathbb F^2_+) = \mathcal O_2$, the Cuntz algebra. These C$^*$-algebras are not isomorphic so the semicrossed products cannot be completely isometrically isomorphic.
\end{example}

However, this just implies that piecewise conjugacy is not strong enough to be an invariant for the semicrossed product algebras. If one broadens the theory to a C$^*$-dynamical system, which is comprised of a C$^*$-algebra and $n$ $*$-endomorphisms, then Kakariadis and Katsoulis have shown that the C$^*$-dynamical system forms a complete invariant for the semicrossed product algebra up to what they call outer conjugacy when the C$^*$-algebra has trivial center \cite{KakKats}. Additionally, much work has gone into describing the C$^*$-envelope of the tensor algebra and semicrossed product algebra of a dynamical system \cite{Duncan0, DavFulKak2}. The interested reader should look at the recent survey article by Davidson, Fuller and Kakariadis \cite{DavFulKak} which presents these universal operator algebras in the more general context of semicrossed products of operator algebras by semigroup actions. The study of these semigroup actions can be found in \cite{DavFulKak2} and \cite{DuncanPeters}.

%%%%%%%

To deal with the obstruction posed by the above example, Section 3 introduces a form of conjugacy, called partition conjugacy, that lies between piecewise conjugacy and conjugacy. Theorem \ref{Thm:conjugacy} shows that if two systems are partition conjugate then their semicrossed product algebras are completely isometrically isomorphic. Conversely, Section 4 shows that a c.i. isomorphism of semicrossed product algebras induces a c.i. isomorphism of free products of noncommutative disc algebras. The rigidity proven in Section 2 is then passed along which allows us to show that semicrossed product algebras are c.i. isomorphic if and only if their related dynamical systems are partition conjugate.

Finally, Section 5 introduces a representation theory that is a combination of the nest representation theory from \cite{DavKat2} and Duncan's edge-colored directed graph representation theory from \cite{Duncan2}. This allows us to distinguish the pre-images of the maps at a given point, which in turn proves that the tensor and semicrossed product algebras are c.i.i if and only if the ranges of the maps of the dynamical system are pairwise disjoint.

%%%%%%%%%%%%%%%%%%%%%%%%%%%%%%%%%%%%
%%%%%%%%%%%%%%%%%%%%%%%%%%%%%%%%%%%%
%%%%%%%%%%%%%%%%%%%%%%%%%%%%%%%%%%%%

\section{Automorphisms of free products}

Consider the free product of noncommutative disc algebras $*_{i=1}^m \fA_{n_i}$ with generators $(\fs_{i,1}, \cdots, \fs_{i,n_i}), 1\leq i \leq m$, a set of row contractions.
Popescu \cite[Theorem 2.5]{Pop1}, by way of a dilation result, shows that the generators of the free product can be taken to be row isometries of Cuntz type, in other words each $\fs_{i,j}$ is an isometry and 
\[
\fs_{i,1}\fs_{i,1}^* + \cdots + \fs_{i,n_i}\fs_{i,n_i}^* = I, \ \ 1\leq i\leq m,
\]
in some concrete representation, $*_{i=1}^m \fA_{n_i} \subseteq B(\cH)$.
This is quite natural as the C$^*$-envelope of $\fA_d$ is the Cuntz algebra $\cO_d$, with $\cO_1 = C(\mb T)$.

Let $\cC$ be the commutator ideal of $\fA_n$, the closed ideal generated by $\fs_i\fs_j - \fs_j\fs_i, 1\leq i,j\leq n$. Then $\cA_n = \fA_n/\cC$ is the universal operator algebra generated by a commuting row contraction and the identity. 

A reproducing kernel Hilbert space (RKHS) $\cH$ is a Hilbert space of functions on a specific topological space $X$ such that at every $\lambda\in X$ point evaluation at $\lambda$ is a continuous linear functional. This implies that there is a kernel function $k_\lambda \in \cH$ such that $\langle f, k_\lambda\rangle = f(\lambda)$, so it reproduces the value of $f$ at $\lambda$. The RKHS is uniquely determined by its kernel functions and has a natural identification  with certain holomorphic functions on $X$. Finally, the multiplier algebra, $\Mult(\cH)$, is the operator algebra of all functions on $X$ given by
\[
\Mult(\cH) = \{M_f : M_f(h) = fh \in \cH, \forall h\in \cH\}.
\]
The Drury-Arveson space, $\cH^2_n$, is the RKHS on the open unit ball in $\mb C^n$, namely $\mb B_n$, with kernel functions 
\[
k_\lambda(z) \ = \ \frac{1}{1 - \langle z,\lambda\rangle}, \ \ \lambda \in \mb B_n.
\]
Because $1\in \cH^2_n$ then for all $M_f\in \cM_n =: \Mult(\cH^2_n)$ we have that $M_f(1) = f\cdot 1 = f\in \cH^2_n$. Hence, the multipliers are naturally identified as holomorphic functions. One should note though, for $n\geq 2, \cM_n$ is contained in but not equal to $H^\infty(\mb B_n)$ since the operator norm on $\cM_n$ is not equal to the supremum norm. Arveson \cite{Arv2} developed this theory and showed by way of a dilation theory for commuting row contractions that $\cA_n = \fA_n/\cC$ is completely isometrically isomorphic to the norm closed algebra $\overline{\alg}\{1, M_{z_1}, \cdots, M_{z_n}\}$. Lastly, this implies that $\cA_n$ is naturally contained in $C(\overline{\mb B}_n) \cap H^\infty(\mb B_n)$ but not equal to when $n\geq 2$.

The development of the Drury-Arveson space was spurred on by the obstructions one faces in the polydisc $\mb D^n$. Specifically, while Ando \cite{Ando} showed that two commuting contractions dilate to two commuting unitaries, giving that the  bidisc algebra $A(\mb D^2) = C(\overline{\mb D}^2)\cap H(\mb D^2)$ is the universal operator algebra generated by two commuting contractions, Parrott and later Kajser and Varopoulos \cite{Parrott, Varopoulos} showed that there are three commuting contractions that do not dilate to three commuting isometries. Hence, the universal operator algebra generated by $n$ commuting contractions when $n\geq 3$ is a mysterious algebra that is not the polydisc algebra $A(\mb D^n)$. However, Ito \cite[Proposition I.6.2]{NagyFoias} proved that a set of commuting isometries dilates to a commuting set of unitaries and so the polydisc algebra is the universal operator algebra generated by $n$ commuting isometries.

For the context of a set of row contractions we would like to develop an operator algebra of holomorphic functions on $\times_{i=1}^m \mb B_{n_i}$, which one could call the {\bf polyball}.

%%%%%%%%
\begin{definition}
For $n_1,\cdots, n_m \in \mb N$ let $\cH^2_{n_1,\cdots, n_m}$ be the RKHS on $\times_{i=1}^m \mb B_{n_i}$ given by the kernel functions
\[
k_\lambda(z) = \prod_{i=1}^m \frac{1}{1 - \langle z_i, \lambda_i\rangle_{\mb C^{n_i}}}, \ \ \lambda \in \times_{i=1}^m \mb B_{n_i}.
\]
Thus, $\cH^2_{n_1,\cdots,n_m} = \cH^2_{n_1}\otimes \cdots\otimes \cH^2_{n_m}$ and is then naturally identified as a space of holomorphic functions on the polyball.
As well, if $M_{z_{i,j}}, 1\leq j\leq n_i, 1\leq i\leq m$ are the coordinate multipliers in $\Mult(\cH^2_{n_1,\cdots, n_m})$ then define
\[
\cA_{n_1,\cdots, n_m} \ = \ \overline{\alg}\{1, M_{z_{1,1}}, \cdots, M_{z_{m, n_m}}\}.
\]
\end{definition}

As before, since $1\in \cH^2_{n_1,\cdots, n_m}$ then $\cA_{n_1,\cdots, n_m}$ can also be naturally identified as an operator algebra of holomorphic functions. Furthermore, the generators of $\cA_{n_1,\cdots, n_m}$ form a set of row contractions and so by the universal property there exists a completely contractive homomorphism 
\[
\pi : *_{i=1}^m \fA_{n_i} \rightarrow \cA_{n_1,\cdots, n_m}
\]
mapping $\fs_{i,j}$ to $M_{z_{i,j}}$.
Except in special cases, $\ker \pi$ will not be equal to the commutator ideal $\cC$ of the free product because $*_{i=1}^m \fA_{n_i}/\cC$ is the universal operator algebra generated by a commuting set of commuting row contractions.

%%%%
Going back to the noncommutative context, recall that the character space of a Banach algebra $\cB$, denoted $M(\cB)$, is the space of all multiplicative linear functionals endowed with the weak$^*$ topology. Popescu \cite{Pop1} proved that the characters of $*_{i=1}^m \fA_{n_i}$ are exactly the point evaluations $\rho_\lambda$ where $\lambda \in \times_{i=1}^m \overline{\mb B}_{n_i}$ and $\rho_\lambda(\fs_{i,j}) = \lambda_{i,j}$. If $\varphi \in \Aut(*_{i=1}^m \fA_{n_i})$, a bounded automorphism, then it induces a homeomorphism of the character space by 
\[
\rho_\lambda \in M(*_{i=1}^m \fA_{n_i}) \ \ \mapsto \ \ \varphi^*(\rho_\lambda) := \rho_\lambda \circ \varphi.
\]
Since the boundary of the closed polyball will be mapped to itself, $\varphi^*$ is also a homeomorphism of the open polyball.
Thus, for every $\lambda\in \times_{i=1}^m \mb B_{n_i}$ there exists $\mu\in \times_{i=1}^m \mb B_{n_i}$ such that $\varphi^*(\rho_\lambda) = \rho_\mu$ and so we abuse notation and define $\varphi^*$ to be a map from $\times_{i=1}^m \mb B_{n_i}$ into itself defined by 
\[
\varphi^*(\lambda) = \mu \ \ \Leftrightarrow \ \ \varphi^*(\rho_\lambda) = \rho_\mu,
\]
which is a homeomorphism on the polyball. Notice that $M(\cA_{n_1,\cdots, n_m})$ is also homeomorphic to $\times_{i=1}^m \overline{\mb B}_{n_i}$.

%%%%%%%
\begin{proposition}\label{Prop:characterextension}
Let $\pi : *_{i=1}^m \fA_{n_i} \rightarrow \cA_{n_1,\cdots, n_m}$ be the canonical quotient map, then 
\[
\ker \pi = \bigcap_{\lambda\in \times_{i=1}^m \mb B_{n_i}} \ker \rho_\lambda.
\]
\end{proposition}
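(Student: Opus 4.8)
The plan is to prove the two inclusions separately, the key point being that each interior point-evaluation character of $*_{i=1}^m \fA_{n_i}$ factors through $\pi$. First I would record that for every $\lambda \in \times_{i=1}^m \mb B_{n_i}$ the reproducing kernel Hilbert space $\cH^2_{n_1,\cdots,n_m}$ has a nonzero kernel vector $k_\lambda$, and the standard identity $M_f^* k_\lambda = \overline{f(\lambda)}\,k_\lambda$ shows that $M_f \mapsto f(\lambda)$ is a well-defined multiplicative linear functional on $\Mult(\cH^2_{n_1,\cdots,n_m})$; it therefore restricts to a character $\widetilde{\rho}_\lambda$ of $\cA_{n_1,\cdots,n_m}$ with $\widetilde{\rho}_\lambda(M_{z_{i,j}}) = \lambda_{i,j}$. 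Then $\widetilde{\rho}_\lambda \circ \pi$ and $\rho_\lambda$ are characters of the free product that agree on the generators $\fs_{i,j}$, and since both are continuous and the $\fs_{i,j}$ together with $1$ generate $*_{i=1}^m \fA_{n_i}$, we get $\rho_\lambda = \widetilde{\rho}_\lambda \circ \pi$ for all $\lambda$ in the open polyball.

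With this in hand, the inclusion $\ker\pi \subseteq \bigcap_\lambda \ker\rho_\lambda$ is immediate: if $\pi(a) = 0$ then $\rho_\lambda(a) = \widetilde{\rho}_\lambda(\pi(a)) = 0$ for every interior $\lambda$. For the reverse inclusion, suppose $a \in \bigcap_\lambda \ker\rho_\lambda$. Since $1 \in \cH^2_{n_1,\cdots,n_m}$, the operator $\pi(a) \in \cA_{n_1,\cdots,n_m} \subseteq \Mult(\cH^2_{n_1,\cdots,n_m})$ is multiplication by the holomorphic function $g := \pi(a)\cdot 1$ on the polyball (as always, $M_{M_f 1} = M_f$). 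For each $\lambda \in \times_{i=1}^m \mb B_{n_i}$ this gives $g(\lambda) = \widetilde{\rho}_\lambda(M_g) = \widetilde{\rho}_\lambda(\pi(a)) = \rho_\lambda(a) = 0$, so $g$ is the zero function on the open polyball, hence the zero vector of $\cH^2_{n_1,\cdots,n_m}$, and therefore $\pi(a) = M_g = 0$, i.e.\ $a \in \ker\pi$.

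I expect all the content to sit in the reverse inclusion, where the engine is the rigidity of the multiplier algebra: a multiplier of $\cH^2_{n_1,\cdots,n_m}$ is completely determined by its values at the points of the polyball, so the separating family $\{\rho_\lambda : \lambda \in \times_{i=1}^m \mb B_{n_i}\}$, pulled back from the evaluation characters of $\cA_{n_1,\cdots,n_m}$, already carves out $\ker\pi$. The only thing requiring care is checking that the evaluation functionals $\widetilde{\rho}_\lambda$ are genuine characters of $\cA_{n_1,\cdots,n_m}$ and that the identification of elements of $\cA_{n_1,\cdots,n_m}$ with holomorphic functions via $M_f \mapsto M_f 1$ is compatible with these evaluations; both are consequences of the reproducing-kernel structure above, so I do not anticipate a serious obstacle.
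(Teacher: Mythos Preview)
Your proof is correct and follows essentially the same route as the paper: both arguments establish the factorization $\rho_\lambda = \widetilde{\rho}_\lambda \circ \pi$ by checking agreement on generators, use it for the easy inclusion, and then for the reverse inclusion observe that an element of $\cA_{n_1,\cdots,n_m}$ vanishing at every interior point of the polyball must be zero. The only cosmetic difference is that you justify the existence of the evaluation characters $\widetilde{\rho}_\lambda$ via the reproducing-kernel identity $M_f^* k_\lambda = \overline{f(\lambda)}\,k_\lambda$, whereas the paper simply invokes the already-stated identification of $M(\cA_{n_1,\cdots,n_m})$ with the closed polyball.
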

\begin{proof}
Let $\lambda\in \times_{i=1}^m \mb B_{n_i}$ and $\rho_\lambda^{\cA} \in M(\cA_{n_1,\cdots,n_m})$. Then $\rho_\lambda^{\cA} \circ\pi$ is a multiplicative linear functional, i.e. a character, of $*_{i=1}^m \fA_{n_i}$. Hence, there exists $\mu \in \times_{i=1}^m \mb B_{n_i}$ such that $\rho_\lambda^\cA\circ \pi = \rho_\mu$.
However, 
\[
\lambda_{i,j} = \rho_\lambda^\cA(M_{z_{i,j}}) = \rho_\lambda^\cA\circ\pi(\fs_{i,j}) = \rho_\mu(\fs_{i,j}) = \mu_{i,j}.
\]
Thus, $\lambda = \mu$ and so $\ker \pi \subset \ker \rho_\lambda$ for all $\rho_\lambda \in M(*_{i=1}^m \fA_{n_i})$.

Conversely, if $A\in *_{i=1}^m \fA_{n_i}$ such that $\rho_\mu(A) = 0$ for every $\mu$ in the polyball then $\pi(A) \in \ker \rho_\lambda^\cA$ for every $\lambda$ in the polyball. But $\pi(A) = M_f$ where $f$ is a continuous function on $\times_{i=1}^m \overline{\mb B}_{n_i}$ with 
\[
0 =\rho_\lambda^\cA(\pi(A)) = \rho_\lambda^\cA(M_f) = f(\lambda).
\]
Therefore, $f = 0$ and $\pi(A) = 0$.
\end{proof}

%%%%%%%%%
\begin{corollary}\label{Cor:holomorphicautos}
If $\varphi \in \Aut(*_{i=1}^m \fA_{n_i})$ then it induces $\tilde\varphi \in \Aut(\cA_{n_1,\cdots,n_m})$, such that $\tilde\varphi \circ\pi = \pi \circ\varphi$.
\end{corollary}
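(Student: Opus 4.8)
The plan is to show that the ideal $\ker\pi$ is invariant under $\varphi$, so that $\varphi$ descends to the quotient $\cA_{n_1,\cdots,n_m} = \bigl(*_{i=1}^m \fA_{n_i}\bigr)/\ker\pi$. The engine is Proposition \ref{Prop:characterextension} together with the observation made just before it, namely that $\varphi$ induces on characters a map $\varphi^*$ that restricts to a homeomorphism of the open polyball $\times_{i=1}^m \mb B_{n_i}$ onto itself, with $\rho_\lambda\circ\varphi = \rho_{\varphi^*(\lambda)}$.

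First I would verify $\varphi(\ker\pi)\subseteq\ker\pi$. Let $A\in\ker\pi$; by Proposition \ref{Prop:characterextension}, $\rho_\mu(A)=0$ for every $\mu\in\times_{i=1}^m\mb B_{n_i}$. For any such $\mu$, the functional $\rho_\mu\circ\varphi$ is again a character of $*_{i=1}^m\fA_{n_i}$, and since $\varphi^*$ preserves the open polyball we have $\rho_\mu\circ\varphi = \rho_{\varphi^*(\mu)}$ with $\varphi^*(\mu)$ again in the open polyball. Hence $\rho_\mu(\varphi(A)) = \rho_{\varphi^*(\mu)}(A)=0$, so $\varphi(A)\in\bigcap_{\lambda}\ker\rho_\lambda=\ker\pi$. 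Applying the same argument to $\varphi^{-1}$ gives $\varphi^{-1}(\ker\pi)\subseteq\ker\pi$, whence $\ker\pi\subseteq\varphi(\ker\pi)$ and therefore $\varphi(\ker\pi)=\ker\pi$.

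Next I would invoke the universal property of the quotient by a closed ideal: since $\varphi$ carries $\ker\pi$ onto itself, the composition $\pi\circ\varphi$ annihilates $\ker\pi$ and so factors as $\pi\circ\varphi=\tilde\varphi\circ\pi$ for a well-defined homomorphism $\tilde\varphi$ of $\cA_{n_1,\cdots,n_m}$; boundedness of $\tilde\varphi$ follows from that of $\varphi$ and the (complete) contractivity of $\pi$, with $\|\tilde\varphi\|\leq\|\varphi\|$, and likewise for the completely bounded norm. Running the construction with $\varphi^{-1}$ yields $\widetilde{\varphi^{-1}}$, and the identities $\widetilde{\varphi^{-1}}\circ\tilde\varphi\circ\pi = \pi = \tilde\varphi\circ\widetilde{\varphi^{-1}}\circ\pi$ together with the surjectivity of $\pi$ force $\widetilde{\varphi^{-1}}=\tilde\varphi^{-1}$; hence $\tilde\varphi\in\Aut(\cA_{n_1,\cdots,n_m})$ and $\tilde\varphi\circ\pi=\pi\circ\varphi$ by construction.

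There is no serious obstacle here: the only point needing care is the invariance of $\ker\pi$, and that is immediate from its description as an intersection of character kernels and the fact that $\varphi^*$ merely permutes those characters within the open polyball. Tracking norms in the same argument also shows that a completely isometric $\varphi$ induces a completely isometric $\tilde\varphi$, which is the refinement that will be used later.
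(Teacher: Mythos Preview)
Your proof is correct and follows essentially the same approach as the paper: both use Proposition~\ref{Prop:characterextension} to identify $\ker\pi$ with $\bigcap_{\lambda}\ker\rho_\lambda$ and then exploit that $\varphi^*$ is a bijection of the open polyball to conclude $\varphi(\ker\pi)=\ker\pi$. The paper compresses your two containments into the single chain $\varphi^{-1}(\ker\pi)=\bigcap_\lambda\ker\varphi^*(\rho_\lambda)=\bigcap_\lambda\ker\rho_\lambda=\ker\pi$ and then passes to the quotient without spelling out the inverse construction, but the substance is identical.
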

\begin{proof} 
By the previous proposition, since $\varphi^*$ is a bijection of $\times_{i=1}^m \mb B_{n_i}$, we have 
\[
\varphi^{-1}(\ker \pi) \ = \ \varphi^{-1}\big(\bigcap_{\lambda\in \times_{i=1}^m \mb B_{n_i}} \ker \rho_\lambda\big) 
\ = \ \bigcap_{\lambda \in \times_{i=1}^m \mb B_{n_i}} \ker \varphi^*(\rho_\lambda)
\]
\[
= \ \bigcap_{\lambda \in \times_{i=1}^m \mb B_{n_i}} \ker \rho_\lambda \ = \ \ker \pi.
\]
Therefore, $\varphi$ induces the required automorphism of the quotient by $\pi$.
\end{proof}

From this theory we can now prove that $\varphi^*$ is not just a homeomorphism but also a biholomorphism by a standard argument using the character space. 

%%%%%%%
\begin{theorem}
If $\varphi \in \Aut(*_{i=1}^m \fA_{n_i})$ then $\varphi^*$ is a biholomorphism and so is in $\Aut(\times_{i=1}^m \mb B_{n_i})$.
\end{theorem}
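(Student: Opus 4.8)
The plan is to use Corollary \ref{Cor:holomorphicautos} to transport $\varphi$ to the commutative quotient $\cA_{n_1,\cdots,n_m}$, where everything is literally a function on the polyball, and then read off holomorphy of $\varphi^*$ coordinate by coordinate from the fact that multipliers of $\cH^2_{n_1,\cdots,n_m}$ are holomorphic functions. This is the ``standard argument using the character space'' alluded to just before the statement.

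First I would identify the components of $\varphi^*$. Fix $\lambda \in \times_{i=1}^m \mb B_{n_i}$. By Proposition \ref{Prop:characterextension} we have $\ker \pi \subseteq \ker \rho_\lambda$, so $\rho_\lambda$ descends through $\pi$: there is a character $\rho_\lambda^\cA$ of $\cA_{n_1,\cdots,n_m}$ with $\rho_\lambda = \rho_\lambda^\cA \circ \pi$. Applying this to $\varphi(\fs_{i,j})$ and using $\pi\circ\varphi = \tilde\varphi\circ\pi$ gives
\[
\varphi^*(\lambda)_{i,j} \;=\; \rho_\lambda(\varphi(\fs_{i,j})) \;=\; \rho_\lambda^\cA\big(\pi(\varphi(\fs_{i,j}))\big) \;=\; \rho_\lambda^\cA\big(\tilde\varphi(M_{z_{i,j}})\big).
\]
Now $\tilde\varphi(M_{z_{i,j}}) \in \cA_{n_1,\cdots,n_m} \subseteq \Mult(\cH^2_{n_1,\cdots,n_m})$, and since $1 \in \cH^2_{n_1,\cdots,n_m}$ each multiplier $M_f$ is identified with the genuine holomorphic function $f = M_f\cdot 1$ on the polyball, for which $\rho_\lambda^\cA(M_f) = f(\lambda)$ (indeed $f$ extends continuously to $\times_{i=1}^m \overline{\mb B}_{n_i}$, as in the proof of Proposition \ref{Prop:characterextension}). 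Writing $\tilde\varphi(M_{z_{i,j}}) = M_{f_{i,j}}$, we obtain $\varphi^*(\lambda)_{i,j} = f_{i,j}(\lambda)$, so each coordinate of $\varphi^*$ is holomorphic on $\times_{i=1}^m \mb B_{n_i}$; hence $\varphi^*$ is holomorphic.

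Then I would run the same argument on $\varphi^{-1} \in \Aut(*_{i=1}^m \fA_{n_i})$: its induced map $(\varphi^{-1})^*$ is holomorphic for the same reason, and directly from $\varphi^*(\rho_\lambda) = \rho_\lambda\circ\varphi$ one checks $(\varphi^{-1})^* = (\varphi^*)^{-1}$. Since $\varphi^*$ is already a homeomorphism of the open polyball (noted just above the theorem) and both it and its inverse are holomorphic, $\varphi^*$ is a biholomorphism, i.e.\ $\varphi^* \in \Aut(\times_{i=1}^m \mb B_{n_i})$.

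The only step with real content is the identification $\rho_\lambda^\cA(M_f) = f(\lambda)$ with $f$ holomorphic, that is, recognizing that the quotient $\cA_{n_1,\cdots,n_m}$ converts the abstract automorphism $\varphi$ into an $m$-tuple of genuine holomorphic multiplier functions; the rest is bookkeeping with characters. Given Corollary \ref{Cor:holomorphicautos} and the fact that $\cA_{n_1,\cdots,n_m}$ embeds into a space of holomorphic functions on the polyball, I do not expect any genuine obstacle here.
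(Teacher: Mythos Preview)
Your proposal is correct and follows essentially the same route as the paper: pass to the commutative quotient via Corollary~\ref{Cor:holomorphicautos}, identify each coordinate $\varphi^*(\lambda)_{i,j}$ with the value at $\lambda$ of the holomorphic multiplier $\tilde\varphi(M_{z_{i,j}})$, and then apply the same reasoning to $\varphi^{-1}$. The paper packages this as a single holomorphic map $H$ and then verifies $H=\varphi^*$, but the content is identical to your coordinate-by-coordinate computation.
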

\begin{proof}
By Corollary \ref{Cor:holomorphicautos} there exists $\tilde\varphi \in \Aut(\cA_{n_1,\cdots, n_m})$ induced by $\varphi$. Define the following map $H:\times_{i=1}^m \mb B_{n_i} \rightarrow \times_{i=1}^m \mb C^{n_i}$ by 
\[
H(\lambda) = (\tilde\varphi\circ\pi(\fs_{1,1})(\lambda), \cdots, \tilde\varphi\circ\pi(\fs_{m,n_n})(\lambda)), \ \ \lambda\in \times_{i=1}^m \mb B_{n_i}.
\]
This is a holomorphic map on $\times_{i=1}^m \mb B_{n_i}$ into itself because each $\tilde\varphi\circ\pi(\fs_{i,j})$ is a holomorphic function. 
Moreover, by the proof of Proposition \ref{Prop:characterextension}
\[
H(\lambda) = (\rho_\lambda^\cA(\tilde\varphi\circ\pi(\fs_{1,1})), \cdots, \rho_\lambda^\cA(\tilde\varphi\circ\pi(\fs_{m,n_m})))
\]
\[
= (\rho_\lambda^\cA\circ\pi\circ\varphi(\fs_{1,1}), \cdots, \rho_\lambda^\cA\circ\pi\circ\varphi(\fs_{m,n_m})
\]
\[
= (\varphi^*(\rho_\lambda(\fs_{1,1})), \cdots, \varphi^*(\rho_\lambda(\fs_{m,n_m}))) = \varphi^*(\lambda) \in \times_{i=1}^m \mb B_{n_i},
\]
because $\varphi^*(\rho_\lambda)$ is completely contractive, it's a character.
Do this again for $\varphi^{-1}$ to get that $\varphi^*$ is a biholomorphism.
\end{proof}

It remains to be shown that the map $\varphi \mapsto \varphi^*$ is a bijection. To this end we first look at the structure of the automorphism group of the polyball.

It is well known that $\Aut(\mb D^n) = \Aut(\mb D)^n \rtimes S_n$, where the symmetric group has the natural permutation action, see \cite{Rudin}. A generalization of this due to Ligocka and Tsyganov \cite{Lig, Tsyganov} says that if there is biholomorphism of a product space onto another product space, with each space having a $C^2$ boundary, then there are the same number of spaces and by rearranging the order of the second product space the biholomorphism is just a product of individual biholomorphisms.
Specifically, the automorphisms of $\times_{i=1}^m \mb B_{n_i}$ are given by automorphisms in each of the spaces along with permutations among the homeomorphic spaces. Specifically, 
\[
 \Aut(\times_{i=1}^n (\mb B_i)^{k_i}) = \times_{i=1}^n (\Aut(\mb B_i)^{k_i} \rtimes S_{k_i}),
\]
where the automorphisms of the ball are fractional linear transformations \cite{Rudin2}.

In \cite{DavPitts, Pop3}, Davidson-Pitts and Popescu separately proved that the isometric automorphisms of $\fA_n$ are completely isometric and in bijective correspondence with the automorphisms of the ball, $\mb B_n$, in a natural way. Indeed, both papers show that the following construction of Voiculescu \cite{Voic} is the appropriate way to create an automorphism of $\fA_n$ from an automorphism of the ball.

Let $\phi \in \Aut(\mb B_n)$. Then there exists $X = \left[\begin{array}{cc} x_0 & \eta_1^* \\ \eta_2 & X_1 \end{array}\right] \in U(1,n)$ such that
\[
\phi(\lambda)  \ \ = \ \ \frac{X_1\lambda + \eta_2}{x_0 + \langle \lambda, \eta_1\rangle} \ \ for \ \ \lambda \in \mb B_n.
\]
Voiculescu then defines an automorphism of the Cuntz-Toeplitz algebra $\cE_n$, with generators the left creation operators, $L_1,\cdots, L_n$, on the full Fock space, by 
\[
\varphi(L_\zeta) = (\overline{x}_0 I - L_{\overline{\eta}_2})^{-1}(L_{\overline{X}_1\zeta} - \langle \zeta, \overline{\eta}_1\rangle I ),
\]
where $L_\zeta = \sum_{i=1}^n \zeta_i L_i$. One can think of the above map as a non-commutative fractional linear map. This automorphism restricts to the non-commutative disc algebra, $\fA_n$. Lastly, this is the natural way to do this because $\varphi^* = \phi$, \cite[Lemma 4.10]{DavPitts}. In what follows, note that $\Aut_{ci}(\cB)$ denotes the completely isometric automorphisms of $\cB$.

%%%%%%%%%%
\begin{lemma}\label{Lemma:surjective}
Let $\phi \in \Aut(\times_{i=1}^m \mb B_{n_i})$. There exists $\varphi \in \Aut_{ci}(*_{i=1}^m \fA_{n_i})$ such that $\varphi^* = \phi$.
\end{lemma}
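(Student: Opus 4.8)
The plan is to construct $\varphi$ directly on generators by lifting, one piece at a time, the two building blocks of an automorphism of a product of balls — an automorphism of a single ball, and a permutation of equidimensional factors. To begin I would apply the structure of $\Aut(\times_{i=1}^m \mb B_{n_i})$ recalled above: there is a permutation $\pi$ of $\{1,\dots,m\}$ with $n_{\pi(i)} = n_i$ for every $i$, together with automorphisms $\phi_i\in\Aut(\mb B_{n_i})$, such that the $i$-th ball-component of $\phi(\lambda_1,\dots,\lambda_m)$ equals $\phi_i(\lambda_{\pi(i)})$. Only homeomorphic, hence equidimensional, factors are permuted, so the relabellings used below make sense.

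For the lift, fix $i$. The Davidson--Pitts/Popescu theorem via Voiculescu's construction gives $\psi_i\in\Aut_{ci}(\fA_{n_i})$ with $\psi_i^* = \phi_i$; since $n_i = n_{\pi(i)}$ there is a completely isometric unital isomorphism $\iota_i:\fA_{n_i}\to\fA_{n_{\pi(i)}}$ sending $\fs_{i,k}\mapsto\fs_{\pi(i),k}$; and let $j_k:\fA_{n_k}\hookrightarrow *_{l=1}^m\fA_{n_l}$ be the canonical inclusion. Set $\alpha_i = j_{\pi(i)}\circ\iota_i\circ\psi_i$, a unital completely contractive homomorphism into $*_l\fA_{n_l}$. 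Then $\big( \alpha_i(\fs_{i,1}),\dots,\alpha_i(\fs_{i,n_i})\big)_{i=1}^m$ is a set of row contractions in $*_l\fA_{n_l}$, so Popescu's universal property yields a unique unital completely contractive homomorphism $\varphi:*_{i=1}^m\fA_{n_i}\to *_{i=1}^m\fA_{n_i}$ with $\varphi(\fs_{i,j}) = \alpha_i(\fs_{i,j})$. Running the construction again with $\pi^{-1}$, the $\phi_i^{-1}$ and the $\psi_i^{-1}$ produces $\varphi'$ of the same type; evaluating $\varphi'\circ\varphi$ and $\varphi\circ\varphi'$ on the generators (where $\psi_i^{-1}\circ\psi_i = \id$ and the isomorphisms $\iota_i$ and the inclusions only relabel indices) shows they agree with $\id$ on generators, hence everywhere by the uniqueness clause. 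Thus $\varphi$ is bijective with completely contractive inverse, i.e. $\varphi\in\Aut_{ci}(*_{i=1}^m\fA_{n_i})$.

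It remains to check $\varphi^* = \phi$. For $\lambda$ in the open polyball and any generator, $\varphi^*(\rho_\lambda)(\fs_{i,j}) = \rho_\lambda(\varphi(\fs_{i,j})) = \rho_\lambda(\alpha_i(\fs_{i,j}))$. The character $\rho_\lambda$ of $*_l\fA_{n_l}$ restricts on the copy $j_{\pi(i)}(\fA_{n_{\pi(i)}})$ to the point evaluation $\rho_{\lambda_{\pi(i)}}$ of $\fA_{n_{\pi(i)}}$, and $\iota_i$ merely relabels generators, so $\rho_\lambda(\alpha_i(\fs_{i,j})) = \rho_{\lambda_{\pi(i)}}(\psi_i(\fs_{i,j})) = \psi_i^*(\rho_{\lambda_{\pi(i)}})(\fs_{i,j}) = \rho_{\phi_i(\lambda_{\pi(i)})}(\fs_{i,j})$, which is the $j$-th coordinate of $\phi_i(\lambda_{\pi(i)})$. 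Comparing with the first paragraph, the $i$-th block of $\varphi^*(\lambda)$ is $\phi_i(\lambda_{\pi(i)})$, i.e. $\varphi^*(\lambda) = \phi(\lambda)$.

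The only real obstacle I anticipate is bookkeeping: aligning the permutation and the single-ball automorphisms produced by the structure theorem with the corresponding index relabellings and Voiculescu lifts at the operator-algebra level, and taking care that Popescu's universal property is invoked with unital completely contractive homomorphisms valued in $*_l\fA_{n_l}$ itself, so that the hypothesis ``set of row contractions'' is genuinely satisfied. The substantive inputs — that the Voiculescu automorphisms are completely isometric with $\psi_i^* = \phi_i$, and that the characters of the free product are exactly the block point evaluations — are quoted from results recalled earlier.
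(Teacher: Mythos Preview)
Your proposal is correct and follows essentially the same approach as the paper: decompose $\phi$ via the Ligocka--Tsyganov structure theorem into a permutation of equidimensional factors together with single-ball automorphisms, lift each piece using the universal property of the free product and the Davidson--Pitts/Popescu/Voiculescu correspondence $\Aut_{ci}(\fA_n)\simeq\Aut(\mb B_n)$, and exhibit the inverse on generators to obtain complete isometry. The only cosmetic difference is that the paper treats the permutation lift and the single-factor lifts as two separate constructions (and then implicitly composes), whereas you package them into one map $\varphi$; you are also more explicit in verifying $\varphi^*=\phi$ on characters, which the paper merely asserts.
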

\begin{proof}
As was described before any automorphism of the polyball is the composition of a permutation and automorphisms in each component. 

First, consider a permutation $\alpha \in S_{n_1\cdots n_m}$ such that $n_{\alpha(i)} = n_i$ for all $1\leq i\leq m$. Define the map $\varphi_\alpha$ on the generators of the free product by
\[
\varphi_\alpha(\fs_{i,j}) = \fs_{\alpha(i),j}, \ \ 1\leq j\leq n_i, 1\leq i\leq m.
\]
By the universal property of the free product this extends to a completely contractive endomorphism of $*_{i=1}^m \fA_{n_i}$. As well, $\varphi_{\alpha^{-1}}$ is the inverse of $\varphi_\alpha$ on the generators and so $\varphi_{\alpha^{-1}} = \varphi_\alpha^{-1}$. Thus, $\varphi_\alpha \in \Aut_{ci}(*_{i=1}^m \fA_{n_i})$ and $\varphi_{\alpha}^* = \alpha^{-1}$.

Second, for a fixed $1\leq k\leq m$ consider $\tilde\varphi \in \Aut_{ci}(\fA_{n_k})$. Then define $\varphi$ on the generators by
\[
\varphi(\fs_{i,j}) = \fs_{i,j}, 1\leq j\leq n_i, i\neq k, \ \ {\rm and} \ \ \varphi|_{\fA_{n_k}} = \tilde\varphi.
\]
Again by the universal property $\varphi$ extends to a completely contractive homomorphism of $*_{i=1}^m \fA_{n_i}$. In the same way construct a c.c. homomorphism from $\tilde\varphi^{-1}$ to see that this $\varphi\in \Aut_{ci}(*_{i=1}^m \fA_{n_i})$ with $\varphi^*|_{|\mb B_{n_i}} = \id, i\neq k$ and $\varphi^*|_{\mb B_{n_k}} = \tilde\varphi^*$. Therefore by the fact that $\Aut_{ci}(\fA_{n_k}) \simeq \Aut(\mb B_{n_k})$ the conclusion follows.
\end{proof}

%%%%%%%

Now we turn our attention to proving the injectivity of the map $\varphi \mapsto \varphi^*$. First we need a few results. Consider the map $\gamma_{\bf z}$ on $*_{i=1}^m \fA_{n_i}$, where ${\bf z} = (z_1,\cdots, z_m), z_i = (z_{i,1},\cdots, z_{i,n_i}) \in \overline{\mb D}^{n_i}$ and
\[ 
\gamma_{\bf z}(\fs_{i,j}) = z_{i,j}\fs_{i,j}, \ \ 1\leq i\leq m, 1\leq j\leq n_i.
\]
Call such a map a {\bf gauge endomorphism}. When all the $z_{i,j}$ are in $\mb T$ these are the well known {\bf gauge automorphisms}. Let $\gamma_z$ denote the gauge automorphism where every $z_{i,j} = z \in \mb T$. The following is a Fourier series theory for these algebras that is only slightly modified from the work of Davidson and Katsoulis in \cite{DavKat2}.
The map $\Phi_k(a) = \int_{\mb T} \gamma_z(a) \overline{z}^k dz$ is a completely contractive projection of $*_{i=1}^m \fA_{n_i}$ onto the span of all words in the $\fs_{i,j}, 1\leq i\leq m, 1\leq j\leq n_i$, of length $k$. From this we can define a Fourier-like series for our algebra. Define the $k$th Cesaro mean by
\[
\sum_k(a)  = \sum_{i=0}^k (1 - \frac{i}{k}) \Phi_i(a) = \int_{\mb T} \gamma_z(a)\sigma_k(z) dz,
\]
where $\sigma_k$ is the Fejer kernel. Hence, a slight modification of the Fejer Theorem in Fourier analysis gives that 
\[
a = \lim_{k\rightarrow \infty} \sum_k(a) = a, \ \ \forall a\in *_{i=1}^m \cA_{n_i}.
\]
Therefore, for $a\in *_{i=1}^m \fA_{n_i}$ we have that $\Phi_k(a) = 0, \forall k\geq 0$ if and only if $a = 0$.

%%%%%%%%
\begin{lemma}\label{Lemma:injective}
The gauge endomorphisms of $*_{i=1}^m \fA_{n_i}$ are completely contractive and injective.
\end{lemma}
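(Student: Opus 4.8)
The statement has two parts: complete contractivity and injectivity of a gauge endomorphism $\gamma_{\bf z}$ with $z_{i,j} \in \overline{\mb D}^{n_i}$. For complete contractivity, the plan is to realize $\gamma_{\bf z}$ as a point-norm limit of gauge \emph{automorphisms} (which are trivially completely isometric, being induced by the unitary rescaling of generators). Writing $z_{i,j} = r_{i,j}w_{i,j}$ with $r_{i,j}\in[0,1]$ and $w_{i,j}\in\mb T$, and using the averaging projections $\Phi_k$ already in hand, one has $\gamma_{\bf z}(a) = \sum_k \big(\prod r_{i,j}^{(\cdot)}\big)\Phi_k(\cdots)$ — more cleanly, approximate each $r_{i,j}$ from below by radii strictly less than $1$ and note that for $s\in[0,1)$ the map $\fs_{i,j}\mapsto s\,\fs_{i,j}$ extends to a completely contractive map because $s\fs_{i,j}$ is still (part of) a row contraction; the real point is that for $|z_{i,j}|\le 1$ the tuple $(z_{i,1}\fs_{i,1},\dots,z_{i,n_i}\fs_{i,n_i})$ is again a row contraction since $\sum_j |z_{i,j}|^2 \fs_{i,j}\fs_{i,j}^* \le \sum_j \fs_{i,j}\fs_{i,j}^* \le I$, and the same holds at every matrix level; hence the universal property of $*_{i=1}^m\fA_{n_i}$ yields a completely contractive homomorphism $\gamma_{\bf z}$ directly, with no limiting argument needed at all.

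For injectivity, I would use the Fourier-analytic machinery recalled just before the lemma. The key observation is that $\gamma_{\bf z}$ is \emph{graded}: it maps the degree-$k$ homogeneous part into itself, and in fact commutes appropriately with the gauge automorphisms $\gamma_z$ for $z\in\mb T$, so that $\Phi_k\circ\gamma_{\bf z} = \gamma_{\bf z}\circ\Phi_k$ for every $k\ge 0$. Thus if $\gamma_{\bf z}(a)=0$ then $\gamma_{\bf z}(\Phi_k(a)) = 0$ for all $k$, reducing the problem to showing $\gamma_{\bf z}$ is injective on each finite-dimensional-generated homogeneous piece $\operatorname{span}$ of words of length $k$. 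On a word $\fs_{i_1,j_1}\cdots\fs_{i_k,j_k}$ the map $\gamma_{\bf z}$ acts by the scalar $z_{i_1,j_1}\cdots z_{i_k,j_k}$; the obstruction is that some of these scalars may vanish (precisely when some $z_{i,j}=0$). So injectivity is genuinely \emph{false} unless all $z_{i,j}\neq 0$ — which means I expect the statement as literally quoted to require the implicit hypothesis that each $z_{i,j}$ lies in $\overline{\mb D}\setminus\{0\}$ (or the author intends "injective" only in that case, or means something weaker). Assuming each $z_{i,j}\neq 0$, define $\gamma_{{\bf z}^{-1}}$ where ${\bf z}^{-1}=(z_{i,j}^{-1})$; this is a well-defined endomorphism (the scalars $z_{i,j}^{-1}$ need not have modulus $\le 1$, so it need not be contractive, but it is still a homomorphism by the universal property applied to the Cuntz-type isometric model — or, more safely, because it is defined on the dense subalgebra of polynomials and one checks it is bounded on each graded piece). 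Then $\gamma_{{\bf z}^{-1}}\circ\gamma_{\bf z} = \operatorname{id}$ on polynomials, hence on all of $*_{i=1}^m\fA_{n_i}$, giving injectivity of $\gamma_{\bf z}$.

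The main obstacle, and the step I would think hardest about, is exactly this injectivity clause: making rigorous sense of $\gamma_{{\bf z}^{-1}}$ as a (possibly unbounded, but everywhere-defined on the completion) left inverse, since it is \emph{not} contractive and so does not come from the universal property directly. The clean route is: show $\gamma_{\bf z}$ restricted to $\operatorname{ran}\Phi_k$ is a scalar multiplication on each word and these words (with their tails) are linearly independent in a controlled way via the Cesàro/Fejér reconstruction $a = \lim_k \sum_k(a)$; then $\gamma_{\bf z}(a)=0 \Rightarrow \Phi_k(\gamma_{\bf z}(a)) = \gamma_{\bf z}(\Phi_k(a)) = 0$ for all $k$, and on $\operatorname{ran}\Phi_k$ the homomorphism $\gamma_{\bf z}$ agrees with the gauge automorphism built from $(w_{i,j})$ composed with scaling by nonzero radii, which is injective there; conclude $\Phi_k(a)=0$ for all $k$, hence $a=0$. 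I would also double-check whether the author in fact only needs the case $z_{i,j}\in\mb T$ downstream (where the endomorphism is literally a gauge automorphism and injectivity is trivial), in which case the proof collapses to a one-line remark and the general statement is stated only for emphasis; but absent that, the argument above via graded left-inverses is the one I would write out.
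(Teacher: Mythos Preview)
Your approach is essentially the paper's: complete contractivity follows immediately from the universal property (since $(z_{i,1}\fs_{i,1},\dots,z_{i,n_i}\fs_{i,n_i})$ is again a row contraction), and injectivity is obtained from the commutation $\Phi_k\circ\gamma_{\bf z}=\gamma_{\bf z}\circ\Phi_k$ together with injectivity of $\gamma_{\bf z}$ on each homogeneous piece, then concluding via the Fej\'er--Ces\`aro reconstruction $a=\lim_k\Sigma_k(a)$.

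Two remarks. First, your caveat about $z_{i,j}=0$ is well taken: the paper asserts without comment that $\gamma_{\bf z}$ is injective on the span of words of length $k$, and this is indeed false if some $z_{i,j}=0$; however, the only downstream use of the lemma (in the proof of Theorem~\ref{Thm:Autos}) has all $z_{i,j}\in\{1,\tfrac{1}{\sqrt 2}\}$, so the nonzero hypothesis is harmless in context. Second, you make this step harder than it needs to be: there is no need to build a global (unbounded) left inverse $\gamma_{{\bf z}^{-1}}$ on the whole algebra. The range of $\Phi_k$ is the finite-dimensional span of the words of length $k$, these words are linearly independent (e.g.\ in the Fock-space model they are isometries with pairwise orthogonal ranges), and $\gamma_{\bf z}$ acts diagonally on them with nonzero scalars once each $z_{i,j}\neq 0$. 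That is exactly the one-line justification the paper has in mind when it writes ``and moreover, is injective on this subspace.''
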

\begin{proof}
By universality $\gamma_{\bf z}$ is a completely contractive homomorphism. For every $k\geq 0$ we have that $\gamma_{\bf z}$ leaves the subspace spanned by words of length $k$ invariant and moreover, is injective on this subspace. Hence, $\gamma_{\bf z}\circ\Phi_k = \Phi_k\circ\gamma_{\bf z}$.

Now, if $a\in *_{i=1}^m \fA_{n_i}$ and $a\neq 0$ then there exists $k\geq 0$ such that $\Phi_k(a) \neq 0$. This implies that $\Phi_k(\gamma_{\bf z}(a)) = \gamma_{\bf z}(\Phi_k(a)) \neq 0$ and so $\gamma_{\bf z}(a) \neq 0$, either. Therefore, $\gamma_{\bf z}$ is injective.
\end{proof}

We are now in a position to prove the analogous result to that in \cite{DavPitts, Pop3}.

%%%%%%%%%%%%%%MAIN THEOREM OF SECTION%%%%%%%%%%%%%%%%%
\begin{theorem}\label{Thm:Autos}
$\Aut_{ci}(*_{i=1}^m \fA_{n_i}) \simeq \Aut(\times_{i=1}^m \mb B_{n_i})$.
\end{theorem}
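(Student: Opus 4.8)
The plan is to show that $\varphi\mapsto(\varphi^*)^{-1}$ is a group isomorphism from $\Aut_{ci}(*_{i=1}^m\fA_{n_i})$ onto $\Aut(\times_{i=1}^m\mb B_{n_i})$. It is a homomorphism because $(\psi\circ\varphi)^*=\varphi^*\circ\psi^*$; it lands in $\Aut(\times_{i=1}^m\mb B_{n_i})$ by the preceding theorem; and it is surjective by Lemma~\ref{Lemma:surjective}. So everything comes down to injectivity, and since $(\psi^{-1}\circ\varphi)^*=\varphi^*\circ(\psi^*)^{-1}$, that reduces to the single claim: if $\varphi\in\Aut_{ci}(*_{i=1}^m\fA_{n_i})$ satisfies $\varphi^*=\id$, then $\varphi=\id$. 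The whole difficulty is concentrated here, because by Proposition~\ref{Prop:characterextension} the hypothesis $\varphi^*=\id$ only says that $\varphi$ is the identity modulo $\ker\pi$, which is a large ideal; one must genuinely use complete isometry and not merely the commutative shadow $\cA_{n_1,\dots,n_m}$.

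First I would extract the low-order Fourier data of $\varphi$. Set $P^{(k)}_{i,j}=\Phi_k(\varphi(\fs_{i,j}))$. The degree-zero projection $\Phi_0$ coincides with $\rho_0$, so $P^{(0)}_{i,j}=\rho_0(\varphi(\fs_{i,j}))=\rho_0(\fs_{i,j})=0$. For the degree-one part I would feed characters through the gauge endomorphisms: $\rho_\lambda\circ\gamma_z=\rho_{z\lambda}$, so for $z\in\mb D$ and $\lambda$ in the polyball $\rho_\lambda(\gamma_z(\varphi(\fs_{i,j})))=z\lambda_{i,j}$; since $z\mapsto\gamma_z(\varphi(\fs_{i,j}))$ is a holomorphic function on $\mb D$ with $z^k$-coefficient $P^{(k)}_{i,j}$, matching Taylor coefficients gives $\rho_\lambda(P^{(1)}_{i,j})=\lambda_{i,j}$ for all $\lambda$ and $\rho_\lambda(P^{(k)}_{i,j})=0$ for $k\ge 2$. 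As $P^{(1)}_{i,j}$ lies in the finite-dimensional span of the generators, the identity $\rho_\lambda(P^{(1)}_{i,j})=\lambda_{i,j}$ forces $P^{(1)}_{i,j}=\fs_{i,j}$.

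The last step is an averaging/extreme-point argument that plays the role the noncommutative Schwarz lemma plays for $\fA_n$ in \cite{DavPitts, Pop3}. For $z\in\mb T$ put $\varphi_z:=\gamma_z\circ\varphi\circ\gamma_{\bar z}$, a completely contractive endomorphism with $\varphi_1=\varphi$; since $P^{(0)}_{i,j}=0$ one computes $\varphi_z(\fs_{i,j})=\sum_{k\ge 1}z^{k-1}P^{(k)}_{i,j}$, and integrating over the circle kills all terms except $k=1$, so $\int_{\mb T}\varphi_z(\fs_{i,j})\,dz=P^{(1)}_{i,j}=\fs_{i,j}$. Now fix $i$ and pass to a completely isometric representation $*_{i=1}^m\fA_{n_i}\subseteq B(\cH)$ with Cuntz-type generators, so that the row $[\fs_{i,1}\ \cdots\ \fs_{i,n_i}]$ is a coisometry, hence an extreme point of the closed unit ball of $M_{1,n_i}(B(\cH))$. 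The map $z\mapsto[\varphi_z(\fs_{i,1})\ \cdots\ \varphi_z(\fs_{i,n_i})]$ is norm-continuous, takes values in that unit ball, and has circle-average the extreme point $[\fs_{i,1}\ \cdots\ \fs_{i,n_i}]$; subdividing $\mb T$ into arcs and applying extremality repeatedly forces this family to be constant, so at $z=1$ we get $\varphi(\fs_{i,j})=\fs_{i,j}$ for all $i,j$, whence $\varphi=\id$.

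I expect the genuine obstacle to be exactly the passage from "$\varphi$ fixes every character" to "$\varphi$ fixes the generators" — the character space sees only the quotient of Corollary~\ref{Cor:holomorphicautos} — and the crucial extra inputs are that the degree-one part of $\varphi(\fs_{i,j})$ is forced and that a coisometry cannot be a nontrivial average of row contractions. Everything else is routine: justifying the termwise manipulation of the Fejér–Cesàro expansions, the joint continuity of $(z,a)\mapsto\gamma_z(a)$ needed to Bochner-integrate, and the elementary fact that an extreme point equal to a continuous circle-average of unit-ball elements must equal that average everywhere. (The paper's Lemma~\ref{Lemma:injective} on the injectivity of the $\overline{\mb D}$-gauge endomorphisms suggests an alternative route through the contractive endomorphisms $\gamma_r$, $0<r<1$, but the circle-averaging above already suffices.)
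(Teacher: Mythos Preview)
Your proof is correct and takes a genuinely different route from the paper's. Both arguments reduce to showing that $\varphi^*=\id$ forces $\varphi=\id$, and both ultimately exploit that in a Cuntz-type representation each row $[\fs_{i,1}\ \cdots\ \fs_{i,n_i}]$ is a coisometry. The paper writes $\varphi(\fs_{i,j})=\fs_{i,j}+C_{i,j}$ with $C_{i,j}\in\ker\pi$ and builds a specific completely contractive homomorphism $\nu:*_{i=1}^m\fA_{n_i}\to M_2(*_{i=1}^m\fA_{n_i})$ whose effect is to place the first row of $\nu(\varphi(\fs_{1,j}))$ as $[\fs_{1,j}\ \ \gamma_{\bf z}(C_{1,j})]$; row-contractivity of $[\nu(\varphi(\fs_{1,1}))\ \cdots\ \nu(\varphi(\fs_{1,n_1}))]$ together with $\sum_j\fs_{1,j}\fs_{1,j}^*=I$ then forces $\gamma_{\bf z}(C_{1,j})=0$, and Lemma~\ref{Lemma:injective} gives $C_{1,j}=0$. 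Your argument instead pins down $\Phi_1(\varphi(\fs_{i,j}))=\fs_{i,j}$ via characters and then uses that a coisometry is an extreme point of the row-contraction ball, so it cannot arise as a nontrivial circle-average of the row contractions $[\varphi_z(\fs_{i,1})\ \cdots\ \varphi_z(\fs_{i,n_i})]$. The paper's $\nu$-trick is more constructive and leans on Lemma~\ref{Lemma:injective}; your averaging/extremality approach is softer, avoids that lemma entirely, and is closer in spirit to a Schwarz-lemma argument. Both are clean; the paper's version has the mild advantage of not needing the extremality of coisometries or the Bochner-integral bookkeeping, while yours has the mild advantage of not needing the auxiliary $2\times2$ representation.
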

\begin{proof}
Let $\varphi \in \Aut_{ci}(*_{i=1}^m \fA_{n_i})$ such that $\varphi^* = \id$. This implies that $\varphi(s_{i,j}) = \fs_{i,j} + C_{i,j}$ where $C_{i,j} \in \ker \pi$ with $\pi$ the canonical quotient map of $*_{i=1}^m \fA_{n_i}$ onto $\cA_{n_1,\cdots, n_m}$. If we can show that $C_{i,j} = 0$ for $1\leq i\leq m, 1\leq j\leq n_i$ then the result is established.

First, consider the completely contractive projection onto the commutative algebra corresponding to the 1st component, $\pi_1 : *_{i=1}^m \fA_{n_i} \rightarrow \cA_{n_1}$, which is obtained by the universal property. Clearly, $\ker \pi \subset \ker \pi_1$. 
Now consider the unital map $\nu : *_{i=1}^m \fA_{n_i} \rightarrow M_2(*_{i=1}^m \fA_{n_i})$ given by
\[
\nu(\fs_{1,j}) = \left[\begin{array}{cc} \fs_{1,j} & 0 \\ 0 & \fs_{1,j}\end{array}\right], \ \ 1\leq j\leq n_1 \ \ {\rm and} 
\]
\[
\nu(\fs_{i,j}) = \left[\begin{array}{cc} 0 & \frac{1}{\sqrt{2}} \fs_{i,j} \\ 0 & \frac{1}{\sqrt{2}} \fs_{i,j} \end{array}\right],\ \  2\leq i \leq n, 1\leq j\leq n_i. 
\]
The row contractive generators are sent to row contractions so by the universal property $\nu$ is a completely contractive homomorphism. 
For example
\[
\nu(\fs_{2,1}\fs_{1,1}\fs_{2,1} + \fs_{1,1}^2) = \left[\begin{array}{ll} \fs_{1,1}^2 & \frac{1}{2}\fs_{2,1}\fs_{1,1}\fs_{2,1} \\ 0 & \frac{1}{2}\fs_{2,1}\fs_{1,1}\fs_{2,1} + \fs_{1,1}^2\end{array}\right].
\]
Note that the (1,1) entry of $\nu(a)$ is $\pi_1(a)$.
Let $z_1 = (1,  \cdots, 1) \in \overline{\mb D}^{n_1}$, $z_i = (\frac{1}{\sqrt 2}, \cdots, \frac{1}{\sqrt 2}) \in \overline{\mb D}^{n_i}$, $2\leq i\leq m$ and ${\bf z} = (z_1,\cdots, z_m)$. By the above theory, If $C\in \ker \pi$ then $\nu(C) = \left[\begin{array}{cc} 0 & \gamma_{\bf z}(C) \\ 0 & \gamma_{\bf z}(C) \end{array}\right]$. Thus,
\[
\nu(\fs_{1,j} + C_{1,j}) = \left[\begin{array}{cc} \fs_{1,j} & \gamma_{\bf z}(C_{1,j}) \\ 0 & \fs_{1,j} + \gamma_{\bf z}(C_{1,j}) \end{array}\right], 1\leq j \leq n_i.
\]
But, $[\nu(\fs_{1,1} + C_{1,1}), \cdots, \nu(\fs_{1,n_1} + C_{1,n_1})]$ is a row contraction which implies that  \newline $[\fs_{1,1}, \gamma_{\bf z}(C_{1,1}), \cdots, \fs_{1,n_1}, \gamma_{\bf z}(C_{1,n_1})]$ is also a row contraction. However, we had assumed that $\fs_{1,1}\fs_{1,1}^* + \cdots + \fs_{1,n_1}\fs_{1,n_1}^* = I$ and so $\gamma_{\bf z}(C_{1,j}) = 0, 1\leq j\leq n_1$. Finally, by Lemma \ref{Lemma:injective} $\gamma_{\bf z}$ is injective and so $C_{1,1} = \cdots C_{1,n_1} = 0$.

Repeating the above argument for $i \in \{2,\cdots, m\}$ yields that $\varphi = \id$.
\end{proof}

%%%%%%%%%
\begin{corollary}
If $\varphi : *_{i=1}^{m_1} \fA_{n_i} \rightarrow *_{j=1}^{m_2} \fA_{k_j}$ is a completely isometric isomorphism, then $m_1 = m_2$ and there exists $\alpha \in S_{m_1}$ such that $\varphi = \times_{i=1}^{m_1} \varphi_i$ where $\varphi_i$ is a completely isometric isomorphism from $\fA_{n_i}$ onto $\fA_{k_{\alpha(i)}}, 1\leq i\leq m_1$.
\end{corollary}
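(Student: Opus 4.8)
The plan is to reduce the statement to Theorem~\ref{Thm:Autos} by passing from the isomorphism $\varphi$ to the induced biholomorphism of the character spaces. First I would note that, exactly as before, a completely isometric isomorphism $\varphi:*_{i=1}^{m_1}\fA_{n_i}\to *_{j=1}^{m_2}\fA_{k_j}$ carries characters to characters contravariantly, so it induces a homeomorphism $\varphi^*$ between the character spaces. Since by Popescu's result these character spaces are $\times_{i=1}^{m_1}\overline{\mb B}_{n_i}$ and $\times_{j=1}^{m_2}\overline{\mb B}_{k_j}$, and since $\varphi^*$ must carry the Shilov boundary to the Shilov boundary (as in the discussion before Proposition~\ref{Prop:characterextension}), we get a biholomorphism $\varphi^*:\times_{i=1}^{m_1}\mb B_{n_i}\to \times_{j=1}^{m_2}\mb B_{k_j}$ — here I would invoke the same argument as in the proof of the theorem preceding Lemma~\ref{Lemma:surjective}, applied to both $\varphi$ and $\varphi^{-1}$, to get holomorphicity in both directions.

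Next I would apply the Ligocka--Tsyganov rigidity theorem quoted in the excerpt: a biholomorphism between products of balls (each ball having smooth boundary) forces $m_1=m_2$, a permutation $\alpha\in S_{m_1}$ of the factors with $n_i=k_{\alpha(i)}$ after relabelling, and $\varphi^*=\times_i\phi_i$ with $\phi_i\in\Aut(\mb B_{n_i})$. Using Lemma~\ref{Lemma:surjective} (or rather the constructions in its proof: the permutation endomorphism $\varphi_\alpha$ and the Voiculescu-type automorphisms in each component), I can build a completely isometric isomorphism $\psi:*_{i=1}^{m_1}\fA_{n_i}\to *_{j=1}^{m_2}\fA_{k_j}$ of the desired product-times-permutation form with $\psi^*=\varphi^*$. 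Then $\psi^{-1}\circ\varphi$ is a completely isometric automorphism of $*_{i=1}^{m_1}\fA_{n_i}$ whose induced map on the character space is the identity, so Theorem~\ref{Thm:Autos} (in the precise form $\varphi\mapsto\varphi^*$ is injective, established in its proof) gives $\psi^{-1}\circ\varphi=\id$, i.e. $\varphi=\psi$ has the claimed form.

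One technical point I should handle carefully is matching up the two free products combinatorially: the rigidity theorem identifies $m_1$ with $m_2$ and gives the permutation $\alpha$ only after one has grouped the ball-factors by dimension, so I would phrase the final decomposition $\varphi=\times_{i=1}^{m_1}\varphi_i$ with $\varphi_i:\fA_{n_i}\to\fA_{k_{\alpha(i)}}$ a completely isometric isomorphism, which forces $n_i=k_{\alpha(i)}$ automatically since $\fA_{n}\cong\fA_{k}$ completely isometrically only when $n=k$ (their character spaces are balls of different dimensions otherwise). I also want to make sure that ``$\varphi=\times_{i=1}^{m_1}\varphi_i$'' is interpreted as: $\varphi$ maps the copy of $\fA_{n_i}$ inside the first free product onto the copy of $\fA_{k_{\alpha(i)}}$ in the second, acting there as $\varphi_i$, and then extends by the universal property.

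The main obstacle I expect is purely bookkeeping rather than conceptual: namely verifying that the Voiculescu/permutation construction from the proof of Lemma~\ref{Lemma:surjective} genuinely produces a completely isometric \emph{isomorphism} $\psi$ with $\psi^*=\varphi^*$ between the two (a priori different) free products, so that the injectivity half of Theorem~\ref{Thm:Autos} can be applied to $\psi^{-1}\circ\varphi$. Once the map $\psi$ is in hand, everything else is a direct appeal to results already proved. The appeal to Ligocka--Tsyganov is a black box, so the only real work is assembling $\psi$ and checking $(\psi^{-1}\circ\varphi)^*=\id$.
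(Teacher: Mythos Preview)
Your proposal is correct and follows essentially the same route as the paper: pass to the induced biholomorphism of the polyballs, invoke Ligocka--Tsyganov to obtain $m_1=m_2$ and the permutation $\alpha$, then reduce to a completely isometric automorphism of a single free product and appeal to Theorem~\ref{Thm:Autos}. The only cosmetic difference is that the paper composes with just the permutation isomorphism $\varphi_\alpha$ and then invokes the full description of $\Aut_{ci}$ from Theorem~\ref{Thm:Autos}, whereas you build the entire product map $\psi$ with $\psi^*=\varphi^*$ and invoke only the injectivity of $\varphi\mapsto\varphi^*$; these amount to the same thing.
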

\begin{proof}
As before, this isomorphism induces a homeomorphism of the character spaces, $\times_{i=1}^{m_1} \overline{\mb B}_{n_i}$ and $\times_{j=1}^{m_2} \overline{\mb B}_{k_j}$, that is a biholomorphism on their interiors. By \cite{Lig, Tsyganov}, this implies that $m_1 = m_2$ and there exists a permutation $\alpha \in S_{m_1}$ such that $n_{i} = k_{\alpha(i)}, 1\leq i\leq m_1$. As in Lemma \ref{Lemma:surjective} we can define a completely isometric isomorphism $\varphi_\alpha$ that encodes this permutation. 
Therefore,  $\varphi_\alpha^{-1}\circ\varphi\in \Aut_{ci}(*_{i=1}^m \fA_{n_i})$ and the conclusion follows.
\end{proof}

%%%%%%%%%%%%%%%%%%%%%%%%%%%%%%
%%%%%%%%%%%%%%%%%%%%%%%%%%%%%%
%%%%%%%%%%%%%%%%%%%%%%%%%%%%%%
%%%%%%%%%%%%%%%%%%%%%%%%%%%%%%

\section{Partition conjugacy}

Consider the following form of conjugacy for multivariable dynamical systems.

\begin{definition}
Two dynamical systems, $(X,\sigma)$ and $(Y,\tau)$ are said to be {\bf partition conjugate} if there exists a homeomorphism $\gamma: X\rightarrow Y$ and clopen sets $V_{i,j} \subset X, 1\leq i, j \leq n$ such that 
\begin{enumerate}
\item $\cup_{i=1}^n V_{i,j} = X$ and $V_{i,j} \cap V_{i', j} = \emptyset $ when $ i\neq i'$.
\item $\cup_{j=1}^n V_{i,j} = X$ and $V_{i,j} \cap V_{i, j'} = \emptyset $ when $ j\neq j'$.
\item $\sigma_i|_{V_{i,j}} = \gamma^{-1} \circ \tau_j \circ \gamma|_{V_{i,j}}$, $1\leq i,j\leq n$.
\item $\sigma_i^{-1}(\sigma_i(V_{i,j})) = V_{i,j} = \gamma^{-1}(\tau_j^{-1}(\tau_j\circ\gamma(V_{i,j}))).$
\end{enumerate}
\end{definition}

One should note that when $X$ is connected this is just conjugacy. In the general setting this is stronger than piecewise conjugacy and weaker than conjugacy. An equivalent condition to (4) is that $\sigma_i(V_{i,j}) \cap \sigma_i(V_{i,j'}) = \emptyset$ for $j\neq j'$ since if $x$ is in this intersection then $\sigma_i^{-1}(x) \subset V_{i,j} \cap V_{i,j'}$, a contradiction.

\begin{theorem}\label{Thm:conjugacy}
If $(X,\sigma)$ and $(Y,\tau)$ are partition conjugate then $C_0(X) \times_\sigma \mb F_n^+$ and $C_0(Y)\times \mb F_n^+$ are completely isometrically isomorphic.
\end{theorem}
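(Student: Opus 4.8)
The plan is to write down directly a pair of mutually inverse completely contractive homomorphisms between $C_0(X)\times_\sigma\mb F_n^+$ and $C_0(Y)\times_\tau\mb F_n^+$, each produced from the universal property of a semicrossed product, and then to invoke the fact that a completely contractive isomorphism with completely contractive inverse is automatically completely isometric. Put $W_{i,j}=\gamma(V_{i,j})$, clopen subsets of $Y$ satisfying the obvious transported versions of (1)--(4). Fix a completely isometric Fock (orbit) representation $\rho$ of $C_0(Y)\times_\tau\mb F_n^+$ on a Hilbert space $H$, as in \cite{DavKat2}. Its restriction to $C_0(Y)$ extends to the multiplier algebra $C_b(Y)=M(C_0(Y))$, so each indicator $\chi_{W_{i,j}}$ acts as a projection on $H$ that commutes with $\rho(C_0(Y))$. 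Set $\pi'(f):=\rho(f\circ\gamma^{-1})$ for $f\in C_0(X)$, a $\ast$-representation of $C_0(X)$, and $t_i:=\sum_{j=1}^{n}\rho(\mathfrak t_j)\,\chi_{W_{i,j}}\in B(H)$.

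The first substantive step is to check that each $t_i$ is a contraction, and I expect this to be the main obstacle: one cannot argue $\|\sum_j\mathfrak t_j\chi_{W_{i,j}}\|\le1$ on formal grounds, because the indicators are multipliers, not elements of the algebra, and a sum of contractions glued by orthogonal projections need not be a contraction. Instead one works inside the concrete Fock representation. For fixed $i$ the sets $W_{i,1},\dots,W_{i,n}$ are pairwise disjoint with union $Y$ (condition (2)), so for every word $w$ the functions $\chi_{W_{i,j}}\circ\tau_w$, $1\le j\le n$, have pairwise disjoint supports and sum to $1$; since $\rho(\mathfrak t_j)$ acts as the coordinate shift carrying the $w$-level isometrically onto the $jw$-level and the levels $jw$ are disjoint, a level-by-level computation gives $\|t_i\xi\|^2=\sum_w\|\rho(\sum_j\chi_{W_{i,j}}\circ\tau_w)\xi_w\|^2\le\|\xi\|^2$, whence $\|t_i\|\le1$.

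Next one verifies the covariance relations $\pi'(f)\,t_i=t_i\,\pi'(f\circ\sigma_i)$ for $f\in C_0(X)$, noting that $f\circ\sigma_i\in C_0(X)$ because $\sigma_i$ is proper. Commuting $\rho(f\circ\gamma^{-1})$ past each $\rho(\mathfrak t_j)$ by the $\tau$-covariance rewrites the left side as $\sum_j\rho(\mathfrak t_j)\,\rho((f\circ\gamma^{-1}\circ\tau_j)\chi_{W_{i,j}})$; condition (3) says precisely that $f\circ\gamma^{-1}\circ\tau_j$ and $f\circ\sigma_i\circ\gamma^{-1}$ agree on $W_{i,j}$, so $(f\circ\gamma^{-1}\circ\tau_j)\chi_{W_{i,j}}=(f\circ\sigma_i\circ\gamma^{-1})\chi_{W_{i,j}}$ in $C_0(Y)$, and pulling this central function back out gives $t_i\,\pi'(f\circ\sigma_i)$. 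By the universal property, the covariant pair $(\pi',(t_i)_{i=1}^n)$ induces a completely contractive homomorphism $\Phi$ with $\Phi(f)=\rho(f\circ\gamma^{-1})$ and $\Phi(\mathfrak s_i f)=\sum_j\rho(\mathfrak t_j\,[(f\circ\gamma^{-1})\chi_{W_{i,j}}])$; since $(f\circ\gamma^{-1})\chi_{W_{i,j}}\in C_0(Y)$, the range of $\Phi$ lies in $\rho(C_0(Y)\times_\tau\mb F_n^+)$, so $\Phi$ is a completely contractive homomorphism from $C_0(X)\times_\sigma\mb F_n^+$ into $C_0(Y)\times_\tau\mb F_n^+$.

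Running the same construction with $(X,\sigma)$ and $(Y,\tau)$ interchanged and the indices of the $V_{i,j}$ read in the other order (partition conjugacy is symmetric) yields a completely contractive homomorphism $\Psi$ the other way, with $\Psi(g)=g\circ\gamma$ and $\Psi(\mathfrak t_j g)=\sum_i\mathfrak s_i\,[(g\circ\gamma)\chi_{V_{i,j}}]$. Finally one checks $\Psi\circ\Phi=\id$ and $\Phi\circ\Psi=\id$ on the generating sets $C_0(X)$, $\mathfrak s_i C_0(X)$ and $C_0(Y)$, $\mathfrak t_j C_0(Y)$; the one computation that is not immediate uses $\chi_{V_{k,j}}\chi_{V_{i,j}}=\delta_{ki}\chi_{V_{i,j}}$ (disjointness, condition (1)) to collapse the resulting double sum to a single term and then $\sum_j\chi_{V_{i,j}}=1$ (condition (2)) to recover $\mathfrak s_i f$. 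Thus $\Phi$ is a completely contractive isomorphism with completely contractive inverse $\Psi$, hence a complete isometry, and the two semicrossed products are completely isometrically isomorphic. (Conditions (1)--(3) drive this direction; condition (4) appears to be needed only for the converse established in Section 4.)
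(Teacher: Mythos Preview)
There is a genuine gap in your argument, and it lies exactly where you predicted the main obstacle would be: the contractivity of $t_i$.

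Your proof hinges on the existence of a completely isometric representation $\rho$ of $C_0(Y)\times_\tau\mb F_n^+$ in which the $\rho(\ft_j)$ act as shifts onto pairwise orthogonal ``levels'' $jw$. But any representation with that property makes $[\rho(\ft_1),\dots,\rho(\ft_n)]$ a row isometry; if $\rho$ is completely isometric this forces $\|[\ft_1,\dots,\ft_n]\|\le 1$ in the semicrossed product itself, i.e.\ it forces the semicrossed product to coincide with the tensor algebra. By the results of Section~5 this happens only when the ranges of the $\tau_j$ are pairwise disjoint. In general the Fock/orbit representation of \cite{DavKat2} is completely isometric for the \emph{tensor} algebra, not for the semicrossed product, so the hypothesis on $\rho$ is illegitimate and your ``level-by-level'' estimate for $\|t_i\|$ collapses. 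The same problem recurs in the construction of $\Psi$.

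Your final remark---that condition (4) is not used in this direction---is therefore incorrect, and Example~1.1 shows it concretely. With $X=\{1,2\}$, $\sigma_1=\id$, $\sigma_2$ the swap, $\tau_1\equiv 1$, $\tau_2\equiv 2$, one has clopen $V_{i,j}$ satisfying (1)--(3), yet the two semicrossed products have non-isomorphic $\ca$-envelopes. In fact, in any completely isometric representation of $C(X)\times_\sigma\mb F_2^+$ the element $\fs_1\chi_{\{1\}}+\fs_2\chi_{\{2\}}$ has norm $\sqrt 2$, so your $\Psi(\ft_1)$ is not a contraction. The paper's proof avoids this by using condition (4): it gives $\tau_j(W_{i,j})\cap\tau_{j'}(W_{i,j'})=\emptyset$ for $j\neq j'$, and then covariance turns the cross terms $\chi_{W_{i,j}}\ft_j^*\ft_{j'}\chi_{W_{i,j'}}$ into $\ft_j^*\chi_{\tau_j(W_{i,j})}\chi_{\tau_{j'}(W_{i,j'})}\ft_{j'}=0$, so that $\varphi(\fs_i)$ is an isometry in the target algebra itself rather than in an auxiliary (and non-faithful) Fock model.
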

\begin{proof}
Let $\gamma, \{V_{i,j}\}_{1\leq i,j\leq n}$ be the given partition conjugacy of the two systems.
Suppose $\mathfrak s_1, \cdots, \mathfrak s_n$ and $\mathfrak t_1, \cdots, \mathfrak t_n$ are the generators of $C_0(X) \times_\sigma \mb F_n^+$ and $C_0(Y) \times_\tau \mb F_n^+$ respectively. Define a covariant representation of $(X,\sigma)$ by
\[
\varphi(f) = f \circ \gamma^{-1}
\]
\[
\varphi(\mathfrak s_i) = \sum_{j=1}^n \mathfrak t_{j} \chi_{\gamma(V_{i,j})}.
\]
From the fact that
\[
\chi_{\tau_j\circ\gamma(V_{i,j})} \mathfrak t_j = \mathfrak t_j \chi_{\tau_j\circ\gamma(V_{i,j})} \circ \tau_j  =  \mathfrak t_j \chi_{\tau_j^{-1}\circ\tau_j\circ\gamma(V_{i,j})}  =  \mathfrak t_j \chi_{\gamma(V_{i,j})}
\]
we can get that for $j\neq j'$
\[
(\mathfrak t_j\chi_{\gamma(V_{i,j})})^* \mathfrak t_{j'} \chi_{\gamma(V_{i,j'})} \ \ = \ \ \chi_{\gamma(V_{i,j})}\mathfrak t_{j}^*\mathfrak t_{j'} \chi_{\gamma(V_{i,j'})}
\]
\[
 = \ \ \mathfrak t_j^*\chi_{\tau_j\circ\gamma(V_{i,j})} \chi_{\tau_{j'}\circ\gamma(V_{i,j'})} \mathfrak t_{j'} \ \ = \ \  
 \mathfrak t_j^*\chi_{\gamma^{-1}\circ\sigma_i(V_{i,j})} \chi_{\gamma^{-1}\circ\sigma_i(V_{i,j'})}\mathfrak t_{j'}  \ \ = \ \ 0.
\]
Thus, $\varphi(\mathfrak s_i)$ is an isometry,
\[
\varphi(\mathfrak s_i)^*\varphi(\mathfrak s_i) = {\left(\sum_{j=1}^n \mathfrak t_{j} \chi_{\gamma(V_{i,j})}\right)}^*\sum_{j=1}^n \mathfrak t_{j} \chi_{\gamma(V_{i,j})}
= \sum_{j=1}^n \chi_{\gamma(V_{i,j})}\mathfrak t_j^*\mathfrak t_j \chi_{\gamma(V_{i,j})} = \sum_{j=1}^n \chi_{\gamma(V_{i,j})} = I.
\]

Additionally, $\varphi$ satisfies the covariance relations.
\[
\varphi(f)\varphi(\mathfrak s_i) = (f\circ\gamma^{-1}) \sum_{j=1}^n \mathfrak t_j \chi_{\gamma(V_{i,j})}
\]
\[
= \sum_{j=1}^n \mathfrak t_j \chi_{\gamma(V_{i,j})} (f\circ\gamma^{-1} \circ\tau_j)
\]
\[
= \sum_{j=1}^n \mathfrak t_j \chi_{\gamma(V_{i,j})} (f\circ\sigma_i\circ\gamma^{-1}) = \varphi(\mathfrak s_i)\varphi(f\circ\sigma_i).
\]
Therefore, by the universal property $\varphi$ extends to a completely contractive homomorphism of $C_0(X) \times_\sigma \mb F_n^+$ into $C_0(Y) \times_\tau \mb F_n^+$.

Similarly, we can define another map $\theta : C_0(Y) \times_\tau \mb F_n^+ \rightarrow C_0(X) \times_\sigma \mb F_n^+$ by
\[
\theta(f) = f\circ\gamma
\]
\[
\theta(\mathfrak t_j) = \sum_{i=1}^n \mathfrak s_i \chi_{V_{i,j}}.
\]
In exactly the same way $\theta(\mathfrak t_j)$ is an isometry which satisfies the covariance relations. Hence, $\theta$ is a completely contractive homomorphism. Therefore, $\theta = \varphi^{-1}$ because it is the inverse on the generators and so $\varphi$ is a completely isometric isomorphism.
\end{proof}

For the $n=2$ discrete case we can specify exactly what this partition conjugacy looks like by giving a partition of the vertices in $X$ where partition conjugacy implies conjugacy on each equivalence class. 

Let $(X,\sigma)$ be a countable discrete dynamical system and $x\in X$. Define the equivalence class of $x$ by putting $z\in [x]$ if $\sigma_i(x) = \sigma_j(z)$, $i,j \in \{1,2\}$ and extending transitively. That is, $[x]$ is the smallest set containing $x$ that satisfies 
\[
[x] = \cup_{1\leq i, j \leq 2} \sigma_i^{-1}(\sigma_j([x])).
\]
Note that the following proposition will not be true if there are three or more maps in the dynamical system.

%%%%%%%%%%
\begin{proposition}
Let $(X,\{\sigma_1,\sigma_2\})$ and $(Y,\{\tau_1,\tau_2\})$ be partition conjugate countable discrete dynamical systems.
If $X = [x]$, a single equivalence class, then the systems are conjugate.
\end{proposition}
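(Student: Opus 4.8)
The plan is to exploit the combinatorial structure of the partition conjugacy together with the hypothesis that $X$ is a single equivalence class. Let $\gamma$ and $\{V_{i,j}\}_{1\le i,j\le 2}$ be the given partition conjugacy. Since $n=2$, conditions (1) and (2) say that for each $j$, $\{V_{1,j}, V_{2,j}\}$ is a partition of $X$, and for each $i$, $\{V_{i,1}, V_{i,2}\}$ is a partition of $X$. So the four sets $V_{1,1}, V_{1,2}, V_{2,1}, V_{2,2}$ are determined by a single clopen set: put $A = V_{1,1}$. Then $V_{2,1} = X\setminus A$, and from $V_{1,1}\cap V_{1,2} = \emptyset$ together with $V_{1,1}\cup V_{1,2} = X$ we get $V_{1,2} = X\setminus A$, hence $V_{2,2} = A$. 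So the partition is genuinely governed by one clopen set $A$, with $V_{1,1} = V_{2,2} = A$ and $V_{1,2} = V_{2,1} = X\setminus A$. Condition (3) then reads: $\sigma_1 = \gamma^{-1}\tau_1\gamma$ on $A$ and $\sigma_1 = \gamma^{-1}\tau_2\gamma$ on $X\setminus A$; symmetrically $\sigma_2 = \gamma^{-1}\tau_2\gamma$ on $A$ and $\sigma_2 = \gamma^{-1}\tau_1\gamma$ on $X\setminus A$.

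The goal is to show $A$ is either empty or all of $X$; in either case condition (3) collapses to $\sigma_i = \gamma^{-1}\tau_i\gamma$ globally (for $A=X$) or $\sigma_i = \gamma^{-1}\tau_{j}\gamma$ with the two maps swapped globally (for $A=\emptyset$), and swapping the labels on $(Y,\tau)$ or composing $\gamma$ with that swap gives an honest conjugacy. To see $A$ is clopen and $\sigma$-saturated in the right sense, I would use condition (4): $\sigma_i^{-1}(\sigma_i(V_{i,j})) = V_{i,j}$. Translating, $\sigma_1^{-1}(\sigma_1(A)) = A$ and $\sigma_2^{-1}(\sigma_2(A)) = A$ (taking $(i,j)=(1,1)$ and $(2,2)$), and likewise $\sigma_1^{-1}(\sigma_1(X\setminus A)) = X\setminus A$, $\sigma_2^{-1}(\sigma_2(X\setminus A)) = X\setminus A$. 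So $A$ is saturated under the equivalence relation generated by $x\sim z$ whenever $\sigma_i(x)=\sigma_i(z)$ for some single $i$. The subtlety is that the equivalence class $[x]$ defined in the statement is generated by the \emph{finer-looking but actually coarser} relation $\sigma_i(x)=\sigma_j(z)$ allowing $i\ne j$; I need to show $A$ is saturated under that too, i.e. that $A$ and $X\setminus A$ cannot be glued by a relation of the form $\sigma_1(x) = \sigma_2(z)$ with $x\in A$, $z\notin A$. Here is where I use $\gamma$: if $x\in A$ then $\sigma_1(x) = \gamma^{-1}\tau_1\gamma(x)$, while if $z\in X\setminus A$ then $\sigma_2(z) = \gamma^{-1}\tau_1\gamma(z)$. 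So $\sigma_1(x)=\sigma_2(z)$ forces $\tau_1\gamma(x) = \tau_1\gamma(z)$, hence $\gamma(x)\sim\gamma(z)$ under the single-index relation for $\tau_1$ on $Y$; applying condition (4) for $(Y,\tau)$ — which is symmetric, since partition conjugacy is symmetric — one sees that $\gamma(x)$ and $\gamma(z)$ lie in the same piece of $Y$'s partition, i.e. $\gamma(A)$ is $\tau_1$-saturated, hence pulling back, $x$ and $z$ cannot be forced into the same class across the $A$/$X\setminus A$ divide unless one of $A$, $X\setminus A$ is empty.

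I would organize this as follows. Step 1: reduce the four sets $V_{i,j}$ to the single clopen set $A$ and restate conditions (3) and (4) in terms of $A$. Step 2: show $A$ is invariant under the full equivalence relation generating $[x]$ — i.e., if $w\in A$ and $w' \in [w]$ then $w'\in A$ — by induction on the length of a chain witnessing $w'\in[w]$, handling at each link the cases ``same index'' (directly from condition (4)) and ``different indices'' (using the $\gamma$-intertwining as above to transfer the obstruction to $(Y,\tau)$, where the same type of condition holds). Step 3: conclude that since $X=[x]$ is a single class, the only saturated sets are $\emptyset$ and $X$, so $A\in\{\emptyset, X\}$. Step 4: in either case verify directly that $\gamma$ (possibly post-composed with the coordinate swap on $Y$) is a conjugacy of $(X,\sigma)$ with $(Y,\tau)$. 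The main obstacle is Step 2, specifically the different-index link: one must be careful that condition (4) is used in its equivalent reformulation $\sigma_i(V_{i,j})\cap\sigma_i(V_{i,j'})=\emptyset$ (noted after the definition) and that the argument genuinely needs $n=2$ — with three or more maps the partition sets are no longer pinned down by a single set and a chain can route through a third index, which is exactly why the proposition is flagged as false for $n\ge 3$.
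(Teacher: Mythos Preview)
Your proposal is correct and rests on the same mechanism as the paper's proof---using condition~(4) to propagate membership across each link of the equivalence relation---but your packaging is more structural and arguably cleaner. The paper argues by contradiction: it assumes (after taking $X=Y$, $\gamma=\id$) that there are points $x_1$ with $\sigma_i(x_1)=\tau_i(x_1)$ and $x_2$ with the labels swapped, picks an explicit chain $z_0=x_1,\dots,z_m=x_2$ witnessing $x_2\in[x_1]$, and then at each step does a two-case analysis (same index vs.\ different index) using the pre-image condition to push the property ``$\tau_i=\sigma_i$ here'' from $z_{j-1}$ to $z_j$, reaching a contradiction at $z_m=x_2$. You instead isolate the single clopen set $A=V_{1,1}=V_{2,2}$ up front (a reduction the paper never makes explicit), and then your cross-index step is particularly transparent: if $x\in A$, $z\notin A$ and $\sigma_1(x)=\sigma_2(z)$, then because $A=V_{1,1}$ and $X\setminus A=V_{2,1}$ both carry second index $1$, the relation becomes $\tau_1(\gamma x)=\tau_1(\gamma z)$, and the $\tau$-half of condition~(4) immediately forces $z\in A$. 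This is exactly what the paper's chain argument is doing at each ``different index'' link, but your formulation makes visible \emph{why} $n=2$ is essential (the complement of $A$ is again one of the $V_{i,j}$) and avoids the bookkeeping of indices $i_0,\dots,i_{2m+1}$ and the non-redundancy assumptions on the chain.
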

\begin{proof}
Without loss of generality assume that $X=Y$. 
Assume that the systems are not conjugate. Thus, there exists $x_1, x_2\in X$ such that 
\[
\sigma_i(x_1) = \tau_i(x_1), i=1,2,
\]
\[ 
\sigma_1(x_2) = \tau_2(x_2), \sigma_2(x_2) = \tau_1(x_2)
\]
\[
\sigma_1(x_i) \neq \sigma_2(x_i), i = 1,2.
\]
However, $x_1,x_2 \in [x]$ which implies that there exist $i_1,\cdots, i_{2m} \in \{1,2\}$ and points $z_0,\cdots, z_m \in [x] = X$ such that
\[
z_0 = x_1, z_m = x_2
\]
\[
\sigma_{i_{2j-1}}(z_{j-1}) = \sigma_{i_{2j}}(z_{j}), 1\leq j\leq m.
\]
We may also assume that the $z_i$ are all distinct and that the $\sigma_{i_{2j}}(z_j)$ are distinct as well so that the connection between $x_1$ and $x_2$ has no redundancy. This implies that $i_{2j} \neq i_{2j+1}, 1\leq j\leq m-1$.
For ease of notation, let $\sigma_{i_1}(z_0) = z$ and $i_0, i_{2m+1} \in \{1,2\}$ such that $i_0\neq i_1$ and $i_{2m} \neq i_{2m+1}$. Now,
\[
z_0 \in \sigma_{i_1}^{-1}(\{z\}), \ \ z_0 \notin \sigma_{i_0}^{-1}(\{z\}), \ \ z_1\in \sigma_{i_2}^{-1}(\{z\}), \ \ {\rm and} \ \ z_1 \notin \sigma_{i_3}^{-1}(\{z\}) 
\]
There are two cases. First, when $i_1 = i_2$ we have $\{z_0,z_1\} \in \sigma_{i_1}^{-1}(\{z\})$ and not in $\sigma_{i_0}^{-1}(\{z\})$ which implies that $\{z_0,z_1\} \in \tau_{i_1}^{-1}(\{z\})$ by the pre-image condition since $\tau_{i_1}(x_1) = z$. Thus, $\tau_{i_1}(z_1) = \sigma_{i_1}(z_1)$ and $\tau_{i_0}(z_1) = \sigma_{i_0}(z_1)$.

The second case is when $i_1 \neq i_2$ and so is equal to $i_3$. This implies that $z_1 \notin \sigma_{i_1}^{-1}(\{z\})$. Hence, by the pre-image condition $z_1 \notin \tau_{i_1}^{-1}(\{z\})$ since $z_0 \notin \sigma_{i_2}^{-1}(\{z\})$. Hence, $\tau_{i_1}(z_1) = \tau_{i_3}(z_1) = \sigma_{i_1}(z_1)$ and $\tau_{i_0}(z_1) = \sigma_{i_0}(z_1)$. Either case the conclusion is the same.

For the rest of the proof repeat this argument to get that $\tau_{i_1}(z_j) = \sigma_{i_1}(z_j)$ and $\tau_{i_0}(z_j) = \sigma_{i_0}(z_j)$ for $2\leq j\leq m$. This will contradict the assumption that $\sigma_1(x_2) = \tau_2(x_2), \sigma_2(x_2) = \tau_1(x_2)$ and $\sigma_1(x_2) \neq \sigma_2(x_2)$.
\end{proof}

%%%%%%%%%%%%%%%%%%%%%%%%%%%%%%%%%%%%
%%%%%%%%%%%%%%%%%%%%%%%%%%%%%%%%%%%%
%%%%%%%%%%%%%%%%%%%%%%%%%%%%%%%%%%%%

\section{The characterization theorem}

Suppose $\Theta$ is a completely isometric isomorphism of semicrossed product algebras. This will be shown to induce a c.i. isomorphism of some canonical quotient algebras which in turn induces a c.i. isomorphism of free products of noncommutative disc algebras, thus reducing to the theory of Section 2. It will be shown that this rigidity passes back through to the semicrossed product algebras.

\begin{remark}\label{iandci}
Suppose that $\Theta : C_0(X) \times_\sigma \mb F_n^+ \rightarrow \cB$ is a contractive homomorphism. But then $\Theta(C_0(X))$ and $\Theta(\fs_1),\cdots, \Theta(\fs_n)$ form a covariant representation of $(X,\sigma)$ since the generators remain contractions. Therefore, by the universal property $\Theta$ is actually completely contractive. In this and the next sections everything will be stated as being c.c or completely isometric but one should remember that it does not weaken the conclusions to say contractive or isometric.
\end{remark}

%%%%%%%
\begin{definition}
Given a multivariable dynamical system $(X,\sigma)$ and and a closed subset $X'\subseteq X$, the {\bf sub-dynamical system} $(X',\sigma)$ is the locally compact Hausdorf space $X'$ along with $n$ partially defined proper continuous functions $\sigma_1,\cdots, \sigma_n$ where $\sigma_i$ is defined at $x\in X'$ if and only if $\sigma_i(x) \in X'$.
\end{definition}

One can develop this in general but we are interested only in the case of a finite number of points, so assume $|X'| < \infty$.
For such a closed subset $X'$ of $X$ consider the closed ideal $J_{(X',\sigma)}$ of $C_0(X) \times_\sigma \mb F_n^+$ generated by $f\in C_0(X)$ such that $f(x) = 0, \forall x\in X'$.
Define 
\[
\pi_{(X',\sigma)} : C_0(X) \times_\sigma \mb F_n^+ \rightarrow (C_0(X) \times_\sigma \mb F_n^+)/J_{(X',\sigma)}
\] 
to be the canonical quotient map. This quotient algebra is generated by $C(X')$ and $\pi_{(X',\sigma)}(\fs_i), 1\leq i\leq n$ so it can be concretely represented as a subalgebra of $M_{|X'|}(B(\cH)) = B(\cH^{(|X'|)})$ where we will label $\cH^{(|X'|)} = \oplus_{x\in X'} \cH_x$. Define $P_x$ to be the projection onto $\cH_x$ for every $x\in X'$ and 
\[
\theta_{x,y} : B(\cH^{(|X'|)}) \rightarrow B(\cH_x, \cH_y) = B(\cH)
\]
to be the compression map onto the $(y,x)$ entry of each matrix. Of course, such a $\theta_{x,y}$ is linear and contractive. As well, this satisfies 
\[
\theta_{x,y}(P_y A P_x) = \theta_{x,y}(A), \ \forall A\in B(\cH^{(|X'|)}).
\]
Such a map becomes useful since $\theta_{x,y}(\pi_{(X', \sigma)}(\fs_i)) \neq 0$ if and only if $\sigma_i(x) = y$, for any $x,y\in X'$.

First, we want to show that this quotient has some form of canonicity. To achieve this we turn to operator algebras of finite directed graphs. Let $G = (V, E, r, s)$ be a finite directed graph with $V$ the vertex set, $E$ the edge set, and $r$ and $s$ the range and source maps. The {\bf Cuntz-Krieger algebra} of $G$, $\cO(G)$, is the universal C$^*$-algebra generated by Cuntz-Krieger families $\{S,P\}$ on $\cH$ where this family $\{S,P\}$ consists of a set $\{P_v :v\in V\}$ of mutually orthogonal projections on $\cH$ and a set $\{S_e : e\in E\}$ of partial isometries on $\cH$ such that
\\ \indent $\bullet$ ${S_e}^* S_e = P_{s(e)}$ for all $e\in E$,
\\ \indent $\bullet$ ${S_e}^* S_f = 0$ if $e\neq f$, and 
\\ \indent $\bullet$ $P_v = \sum_{\{e\in E: r(e) = v\}} S_e{S_e}^*$ whenever $v$ is not a source.

The norm closed subalgebra of $\cO(G)$ generated by the $\{S_e\}$ and the $\{P_v\}$ is called the {\bf tensor algebra} and denoted $\cT_+(G)$.  The tensor algebra was first introduced by Muhly and Solel in \cite{MuhlySolel} under the name {\em quiver algebra} in the context of C$^*$-correspondences and Toeplitz algebras and was shown to be equivalent to the above definition in \cite{FowlerRaeburn}. A good review of all things graph theory can be found in \cite{Raeburn}.

 For $G = (V, E, r,s)$ with $E = E_1\dot\cup \cdots \dot\cup E_n$ Duncan \cite{Duncan1} defines an edge-colored Cuntz-Krieger family $\{S,P\}$ on $\cH$ where $\{\{S_e : e\in E_i\}, P\}$ is a Cuntz-Krieger family for $(V, E_i, r,s)$ for $1\leq i\leq n$. 
 
 To every finite sub-dynamical system, $(X', \sigma)$, one can associate a graph. In particular, let $V= X'$ and whenever $x, y\in X'$ with $\sigma_i(x) = y$  define $e\in E_i$ with $s(e) = x$ and $r(e) = y$. Taking $E = \dot\cup_{i=1}^n E_i$ we see that $G = (V,E)$ is a finite directed graph with a finite number of edges. For each $1\leq i\leq n$ let $G_i = (V,E_i)$.

%%%%%%%%%%%
\begin{lemma}
For a sub-dynamical system $(X', \sigma)$ of $(X,\sigma)$ with $|X'| < \infty$, the quotient $(C_0(X) \times_\sigma \mb F_n^+)/J_{(X',\sigma)}$ is completely isometrically isomorphic to the free product $*_{i=1}^n \cT_+(G_i)$ amalgamated over $\spn\{P_v : v\in V\} \simeq C(X')$.
\end{lemma}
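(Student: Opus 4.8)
The strategy is to exhibit a pair of mutually inverse completely contractive homomorphisms between $(C_0(X)\times_\sigma\mathbb F_n^+)/J_{(X',\sigma)}$ and $*_{i=1}^n\cT_+(G_i)$ (the free product amalgamated over $C(X')$), using the universal properties of both algebras. First I would fix notation: write $A$ for the quotient algebra, with generators $\pi_{(X',\sigma)}(\fs_i)$ and the commutative subalgebra $C(X')$ sitting inside it. Since $X'$ is finite, $C(X') = \spn\{e_x : x\in X'\}$ for the minimal projections $e_x$, and the covariance relation in the quotient reads $e_y\,\pi_{(X',\sigma)}(\fs_i) = \pi_{(X',\sigma)}(\fs_i)\,e_x$ whenever $\sigma_i(x)=y$ (and the product is $0$ if $\sigma_i(x)\notin X'$ or $\sigma_i(x)\neq y$). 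On the graph side, the free product $*_{i=1}^n\cT_+(G_i)$ over $\spn\{P_v\}\simeq C(X')$ is generated by the edge partial isometries $S_e$, $e\in E_i$, with $S_e^*S_e = P_{s(e)}$, $P_v = \sum_{r(e)=v,\,e\in E_i} S_eS_e^*$ when $v$ is not a source for color $i$, and $S_e^*S_f=0$ for $e\neq f$ in the same color.

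The two maps are built as follows. Define $\Phi: A \to *_{i=1}^n\cT_+(G_i)$ on generators by $\Phi(e_x) = P_x$ and $\Phi(\pi_{(X',\sigma)}(\fs_i)) = \sum_{e\in E_i} S_e$ (where the sum is over all edges of color $i$, noting each source $x$ of $X'$ contributes at most one such edge, namely the one with $s(e)=x$ and $r(e)=\sigma_i(x)$, provided $\sigma_i(x)\in X'$). One checks that $\Phi(\pi_{(X',\sigma)}(\fs_i))$ is a partial isometry with $\Phi(\pi_{(X',\sigma)}(\fs_i))^*\Phi(\pi_{(X',\sigma)}(\fs_i)) = \sum_{e\in E_i} P_{s(e)} = $ the projection onto those $x$ where $\sigma_i$ is defined, and that the covariance relation is respected; by Remark \ref{iandci} (or rather its analogue for the quotient, which has the same universal property relative to covariant representations that annihilate $J_{(X',\sigma)}$) this extends to a completely contractive homomorphism. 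Conversely define $\Psi: *_{i=1}^n\cT_+(G_i) \to A$ by $\Psi(P_v) = e_v$ and $\Psi(S_e) = e_{r(e)}\,\pi_{(X',\sigma)}(\fs_i)\,e_{s(e)}$ for $e\in E_i$. One verifies each $\{\Psi(S_e), \Psi(P_v)\}$ is an edge-colored Cuntz-Krieger family: the relation $S_e^*S_e = P_{s(e)}$ follows because $\pi_{(X',\sigma)}(\fs_i)^*\pi_{(X',\sigma)}(\fs_i)$ acts as the identity where $\sigma_i$ is defined; the Cuntz relation $P_v = \sum_{r(e)=v} S_eS_e^*$ at non-sources follows from $\pi_{(X',\sigma)}(\fs_i)\pi_{(X',\sigma)}(\fs_i)^* = I$ compressed appropriately, using that the $\sigma_i$-preimages in $X'$ of a fixed point are disjoint across $i$ only within a color; and $S_e^*S_f=0$ for distinct same-color edges because they have distinct sources. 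By Duncan's universal property for $\cT_+(G_i)$ and the universal property of the amalgamated free product over $C(X')$, $\Psi$ extends to a completely contractive homomorphism.

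Finally I would check $\Phi\circ\Psi = \id$ and $\Psi\circ\Phi = \id$ on generators: $\Psi\Phi(e_x) = e_x$, $\Psi\Phi(\pi_{(X',\sigma)}(\fs_i)) = \sum_{e\in E_i} e_{r(e)}\pi_{(X',\sigma)}(\fs_i)e_{s(e)} = \sum_x e_{\sigma_i(x)}\pi_{(X',\sigma)}(\fs_i)e_x = \pi_{(X',\sigma)}(\fs_i)$ using the covariance relation and that $\{e_x\}$ partitions the identity, together with the fact that $J_{(X',\sigma)}$ forces $\pi_{(X',\sigma)}(\fs_i)$ to vanish on the part of $C(X')$ where $\sigma_i$ leaves $X'$; and symmetrically $\Phi\Psi(S_e) = P_{r(e)}(\sum_{f\in E_i}S_f)P_{s(e)} = S_e$ since $e$ is the unique color-$i$ edge from $s(e)$. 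Two mutually inverse completely contractive homomorphisms are completely isometric, so the claim follows.

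The main obstacle I anticipate is the careful bookkeeping in verifying the Cuntz-Krieger relations for $\Psi$ — in particular confirming that the "non-source" condition for $G_i$ matches exactly the set of points $v\in X'$ lying in the range of $\sigma_i$ restricted to $X'$, and that $\pi_{(X',\sigma)}(\fs_i)\pi_{(X',\sigma)}(\fs_i)^*$ compressed to the relevant subspace gives precisely $\sum_{r(e)=v,\,e\in E_i}S_eS_e^*$. This hinges on the fact that in the quotient $A$ the generator $\pi_{(X',\sigma)}(\fs_i)$ behaves like a partial isometry whose final projection is supported on $\sigma_i(X'\cap\operatorname{dom}\sigma_i)$, which must be extracted from the structure of $J_{(X',\sigma)}$ and the covariance relations; one should also confirm that the amalgamation over $C(X')$ on the graph side is compatible with the single shared copy of $C(X')$ inside $A$, so that no extra relations are imposed or lost. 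Everything else is a routine application of the two universal properties.
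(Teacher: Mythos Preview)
Your overall architecture --- build $\Phi$ from the universal property of the semicrossed product (factored through the quotient) and $\Psi$ from the universal property of each $\cT_+(G_i)$ together with the universal property of the amalgamated free product, then check they are mutual inverses on generators --- is sound and is genuinely different from the paper's route. The paper does not use the universal property of $\cT_+(G_i)$ at all: instead it takes the compressions $S_e = P_{r(e)}\pi_{(X',\sigma)}(\fs_i)P_{s(e)}$, observes they form a set of row contractions indexed by the non-source vertices, and invokes Popescu's dilation theorem to dilate them to Cuntz-type row isometries $V_e$ on a larger space. The algebra generated by the resulting $R_i$ and $C(X')$ is then visibly an edge-coloured Cuntz--Krieger family, and the universal property of the \emph{semicrossed product} (only) gives a c.c.\ surjection onto it that kills $J_{(X',\sigma)}$; the dilation/compression relationship supplies the inverse. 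Your approach avoids the dilation theorem entirely, at the cost of needing to know the correct universal property of the tensor algebra.

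That is precisely where your gap lies. To define $\Psi$ you plan to verify that $\{\Psi(S_e),\Psi(P_v)\}$ is a Cuntz--Krieger family, in particular that $\Psi(S_e)^*\Psi(S_e)=e_{s(e)}$ and $\sum_{r(e)=v}\Psi(S_e)\Psi(S_e)^*=e_v$, and you justify these by asserting that $\pi_{(X',\sigma)}(\fs_i)^*\pi_{(X',\sigma)}(\fs_i)$ and $\pi_{(X',\sigma)}(\fs_i)\pi_{(X',\sigma)}(\fs_i)^*$ act as the identity on the relevant subspaces. But the quotient $A$ is a non-selfadjoint operator algebra in which $\pi_{(X',\sigma)}(\fs_i)$ is a priori only a contraction; nothing in the ideal $J_{(X',\sigma)}$ or the covariance relations forces it to be a partial isometry, and indeed the paper's use of dilation is exactly what is needed to \emph{produce} a representation in which those $*$-algebraic relations hold. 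The good news is that you do not need them: the universal property of $\cT_+(G_i)$ for completely contractive representations (Muhly--Solel) only requires the bimodule relation $P_{r(e)}T_eP_{s(e)}=T_e$ and that $[T_e]_{r(e)=v}$ be a row contraction for each $v$. The latter is immediate, since that row equals $e_v\,\pi_{(X',\sigma)}(\fs_i)\,[e_{x_1},\dots,e_{x_k}]$, a product of a contraction and a row of orthogonal projections. Replace your Cuntz--Krieger verification by this row-contractivity check and your argument goes through; the ``main obstacle'' you flagged in your last paragraph then disappears.
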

\begin{proof}
Recall that we are considering this quotient as a subalgebra of $M_{|X'|}(B(\cH))$ and that $P = \{P_x : x\in V = X'\}$ is a set of mutually orthogonal projections. For each $e\in E_i$, set $S_e = P_{r(e)}\pi_{(X',\sigma)}(\fs_i)P_{s(e)}$ noting that $\theta_{s(e),r(e)}(S_e)$ is a contraction. Now, for $y\in X'$ and  $1\leq i\leq n$ let $\{e_1,\cdots, e_k\} = r^{-1}(\{y\}) \cap E_i$ we have that $P_y\pi_{(X',\sigma)}(S_i)(\sum_{j=1}^k P_{s(e_j)})$ is a contraction and thus
\[
[\theta_{s(e_1), y}(\pi_{(X',\sigma)}(\fs_i)), \  \cdots\ , \ \theta_{s(e_k), y}(\pi_{(X',\sigma)}(\fs_i))]
\]
is a row contraction. It should be noted that for $\pi_{(X',\sigma)}(\fs_i)$ there is at most one entry in each column since the $\sigma_i$ are functions.

Considering the $S_e$ as operators on $B(\cH)$ we get a set of row contractions indexed over $r(E) \subseteq X'$. As was mentioned before, Popescu \cite{Pop1} showed that this dilates to a sequence of row isometries $\{V_e : e\in E\}$ on $B(\cK)$ such that $\sum_{e \in E_i, r(e) = x} V_eV_e^* = I$ for each $x\in r(E_i), 1\leq i\leq n$.
Construct operators $R_i \in M_{|X'|}(B(\cK))$ such that the $P_y R_i P_x = V_e$ when $s(e) = x$ and $r(e) = y$ for $e\in E_i$. The operator algebra generated by $C(X')$ and the $R_i$ is generated by an edge-colored Cuntz-Krieger family, a fact that will be used later.

Now, for $1\leq i\leq n$ and $y\in X'$
\[
P_yR_i = R_i\big(\sum_{\sigma_i(x) = y} P_x\big).
\]
Thus for $f\in C_0(X)$ we have that
\[
\pi_{(X',\sigma)}(f) R_i = R_i \pi_{(X',\sigma)}(f\circ\sigma_i).
\]
Thus, $\pi_{(X',\sigma)}$ and $R_1,\cdots, R_n$ form a covariant representation of $(X,\sigma)$. Hence, by the universal property there exists $\theta : C_0(X) \times_\sigma \mb F_n^+$ onto $\overline\alg\{C(X'), R_1,\cdots, R_n\}$ such that $\theta(\fs_i) = R_i, 1\leq i\leq n$. Furthermore, for every $f\in C_0(X)$ such that $f|_{X'} = 0$, $\theta(f) = \pi_{(X',\sigma)}(f) = 0$ and so $\theta(J_{(X',\sigma)}) = 0$.
In other words, $(C_0(X) \times_\sigma \mb F_n^+)/J_{(X',\sigma)}$ is completely isometrically isomorphic to $\overline\alg\{C(X'), R_1,\cdots, R_n\}$ and the conclusion follows.
\end{proof}

%%%%%%%%%
\begin{definition}
Let $\cB \subseteq M_k(B(\cH))$ be an operator algebra. The {\bf entry algebra} of $\cB$ is the operator algebra $E(\cB) \subset B(H)$ generated by $B_{i,j}, 1\leq i,j \leq k$, where $B = [B_{i,j}]_{i,j=1}^k$ for every $B\in\cB$.
\end{definition}

Applying this to our context we recover a familiar object.

%%%%%%%
\begin{corollary}\label{Cor:freeproduct}
If $X' = \{x_1,\cdots, x_k\} \subset X$ then $E((C_0(X) \times_\sigma \mb F_n^+)/J_{(X',\sigma)})$ is equal to $*_{i=1}^m \fA_{n_i}$.
\end{corollary}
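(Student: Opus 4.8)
The plan is to take the description of the quotient from the preceding lemma, namely $(C_0(X) \times_\sigma \mb F_n^+)/J_{(X',\sigma)} \cong *_{i=1}^n \cT_+(G_i)$ amalgamated over $C(X') = \spn\{P_v : v\in V\}$, and to extract its entry algebra. The first step is to observe that when we delete the vertex projections $P_v$ and look only at the scalar entries $\theta_{x,y}$ of the matrix picture, the row-contractive/row-isometric Cuntz--Krieger relations at a fixed vertex $y\in X'$ collapse precisely to the defining relations of a noncommutative disc algebra. Concretely, recall from the lemma that each generator $\pi_{(X',\sigma)}(\fs_i)$ compresses, via $\theta_{s(e),r(e)}$, to the dilated row isometries $V_e$ with $\sum_{e\in E_i,\, r(e)=x} V_e V_e^* = I$ for each $x\in r(E_i)$; and by construction there is at most one edge of each color out of each vertex (the $\sigma_i$ are functions). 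So I would group the nonzero entries of the generators $R_1,\dots,R_n$ according to their \emph{range} vertex.

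The key combinatorial point is this: fix $y\in X'$ and let $E^y = \{e\in E : r(e)=y\}$ be the set of all edges (over all colors) pointing into $y$; say $|E^y| = n_y$. Then the family $\{V_e : e\in E^y\}$ consists of isometries satisfying $\sum_{e\in E^y} V_e V_e^* = I$ (summing the color-by-color Cuntz relations, using that the ranges for distinct colors are mutually orthogonal since they sit in the same corner $P_y(\cdot)P_y$ and the original $S_e$'s have orthogonal ranges), i.e. they form a row isometry of Cuntz type of size $n_y$. These generate a copy of $\fA_{n_y}$. Moreover edges into distinct vertices $y\neq y'$ land in the orthogonal corners $P_y B(\cK) P_{s(e)}$ versus $P_{y'} B(\cK) P_{s(e')}$, so there is no algebraic relation between entries associated to different range vertices beyond what is forced — hence the entry algebra is the \emph{free} product $*_{y\in X'} \fA_{n_y}$. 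Relabelling $X' = \{x_1,\dots,x_k\}$ and setting $m = |r(E)| = $ the number of vertices that actually receive an edge (vertices receiving no edge contribute nothing to $E(\cdot)$), and $n_i$ the in-degree of $x_i$, gives $E\big((C_0(X)\times_\sigma \mb F_n^+)/J_{(X',\sigma)}\big) = *_{i=1}^m \fA_{n_i}$.

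For rigor I would argue the two inclusions separately. For "$\subseteq$": every entry of a product of the generators is a word in the $V_e$, and by the corner orthogonality such a word is either $0$ or lies in a single $\fA_{n_y}$; taking closures, $E(\cdot)$ is contained in the free product. For "$\supseteq$": the free product $*_{y} \fA_{n_y}$ has the universal property for sets of row isometries, and the $\{V_e : e\in E^y\}_{y}$ provide exactly such a set, so the induced completely contractive homomorphism from $*_y \fA_{n_y}$ lands in $E(\cdot)$; injectivity/complete isometry follows because the original $V_e$ already generate the universal object (Popescu), or alternatively because any word that is nonzero in the free product remains nonzero as an entry. The one step I expect to require genuine care — rather than bookkeeping — is verifying that the isometries attached to a fixed range vertex $y$ really do satisfy the full Cuntz relation $\sum_{e\in E^y} V_eV_e^* = I$ rather than merely $\leq I$, i.e. that summing over \emph{all colors} at $y$ gives the identity; this uses that the dilation in the previous lemma was performed so that each color's Cuntz relation holds at $y$ and that distinct colors occupy orthogonal sub-corners, and it is exactly here that the "amalgamated over $C(X')$" structure of the lemma is doing the work.
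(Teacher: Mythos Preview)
Your grouping is wrong, and the step you flagged as ``requiring genuine care'' does in fact fail. You collect, for a fixed range vertex $y$, the isometries $\{V_e : r(e)=y\}$ over \emph{all} colors and claim they form a single Cuntz row isometry with $\sum_{e\in E^y} V_eV_e^* = I$. But the dilation in the preceding lemma gives, for \emph{each} color $i$ separately, $\sum_{e\in E_i,\, r(e)=y} V_eV_e^* = I$. If two colors $i\neq j$ both have an edge into $y$, summing over both already gives $2I$, so the ranges across colors cannot be mutually orthogonal and the combined family is not a row isometry. Your ``orthogonal sub-corners'' justification conflates the matrix corners $P_y(\cdot)P_x$ in $M_{|X'|}(B(\cK))$ with orthogonality inside the single block $B(\cK)$ where the entry algebra lives; the entries $V_e$ for different colors into $y$ all sit in the same copy of $B(\cK)$ and there is no such orthogonality.

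The correct indexing is by pairs $(y,i)$ with $y\in r(E_i)$: for each such pair the family $\{V_e : e\in E_i,\, r(e)=y\}$ is a Cuntz row isometry of size $|\sigma_i^{-1}(y)\cap X'|$, and Popescu's dilation makes these row isometries \emph{free} from one another (not orthogonal), so the entry algebra is the free product of $\fA_{|\sigma_i^{-1}(y)\cap X'|}$ over all such pairs. A quick sanity check shows the difference: take $X'=\{x\}$ with every $\sigma_i$ fixing $x$. Your argument predicts $E(\cdot)=\fA_n$, but the quotient is generated by $n$ free contractions (no row-contractive relation in the semicrossed product), dilating to $n$ free isometries, so the entry algebra is $\fA_1^{*n}=A(\mb D)^{*n}$, not $\fA_n$. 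This is exactly what the paper's proof records: the set of row isometries is indexed by $y\in X'$ \emph{and} $1\le i\le n$, and they are free from each other.
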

\begin{proof}
From the proof of the previous proposition we see that 
\[
\{\theta_{x,y}(\pi_{(X',\sigma)}(\fs_i)) : x\in X', \sigma_i(x) = y\}, \ \ y\in X', \ 1\leq i\leq n
\]
is a set of row isometries. By the proof of the previous proposition these row isometries are free from each other.
\end{proof}

In all of the following theory we take $X=Y$ and assume that any completely isometric isomorphism, $\Theta$, is the identity on $C_0(X)$. This we can do without loss of generality because $\Theta$ maps $C_0(X)$ to $C_0(Y)$ $*$-isomorphically and $(Y,\tau)$ is conjugate to $(\gamma(Y), \gamma\circ\tau\circ\gamma^{-1})$ for any homeomorphism $\gamma$ implying that the semicrossed products of those two systems are completely isometrically isomorphic.

%%%%%%%%
\begin{lemma}\label{Lemma:entryalgebra}
If $\Theta : C_0(X) \times_\sigma \mb F_n^+ \rightarrow C_0(X) \times_\tau \mb F_n^+$ is a completely isometric isomorphism and $X' \subseteq X$ with $|X'| < \infty$ then $\Theta$ induces a completely isometric isomorphism $\varphi_{X'}$ of the entry algebras $E((C_0(X)\times_\sigma \mb F_n^+)/J_{(X', \sigma)})$ and $E((C_0(X) \times_\tau \mb F_n^+)/J_{(X', \tau)})$.
\end{lemma}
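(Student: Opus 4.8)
The plan is to pass the completely isometric isomorphism $\Theta$ through the canonical quotient maps and then down to the entry algebras, using the fact that $\Theta$ fixes $C_0(X)$ and hence respects everything built out of the evaluation structure on $X'$. First I would observe that since $\Theta$ is the identity on $C_0(X)$, it carries the ideal $J_{(X',\sigma)}$ to $J_{(X',\tau)}$: indeed $J_{(X',\sigma)}$ is generated by $\{f \in C_0(X) : f|_{X'} = 0\}$, and $\Theta$ fixes each such $f$, so $\Theta(J_{(X',\sigma)}) \subseteq J_{(X',\tau)}$; applying the same argument to $\Theta^{-1}$ gives equality. Therefore $\Theta$ descends to a completely isometric isomorphism
\[
\bar\Theta : (C_0(X) \times_\sigma \mb F_n^+)/J_{(X',\sigma)} \;\longrightarrow\; (C_0(X) \times_\tau \mb F_n^+)/J_{(X',\tau)}
\]
intertwining the quotient maps $\pi_{(X',\sigma)}$ and $\pi_{(X',\tau)}$, and still equal to the identity on the common image $C(X') = \spn\{P_x : x \in X'\}$.

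Next I would use the concrete matrix pictures described in the text: both quotients are represented inside $M_{|X'|}(B(\cH))$, with the projections $\{P_x : x \in X'\}$ realized as the diagonal matrix units (these are the images of the minimal projections of $C(X')$, which $\bar\Theta$ fixes). Because $\bar\Theta$ fixes each $P_x$, we have $\bar\Theta(P_y A P_x) = P_y \bar\Theta(A) P_x$ for all $A$ and all $x,y \in X'$; that is, $\bar\Theta$ is ``$C(X')$-bimodular'' and hence acts entrywise with respect to the $|X'| \times |X'|$ block decomposition. Consequently $\bar\Theta$ restricts, for each pair $(x,y)$, to a well-defined linear map on the $(y,x)$-corners, and since $\Theta$ fixed $C_0(X)$, the compressions $\theta_{x,y}(\pi_{(X',\sigma)}(\fs_i))$ are carried to $\theta_{x,y}(\pi_{(X',\tau)}(\fs_i'))$ (possibly reindexed by which $\tau_j$ now hits $y$ from $x$). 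Assembling these entrywise maps gives a homomorphism $\varphi_{X'}$ between the entry algebras $E((C_0(X)\times_\sigma\mb F_n^+)/J_{(X',\sigma)})$ and $E((C_0(X)\times_\tau\mb F_n^+)/J_{(X',\tau)})$, with $\varphi_{X'}^{-1}$ obtained the same way from $\Theta^{-1}$, so $\varphi_{X'}$ is an isomorphism.

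The remaining point — and the one requiring the most care — is showing $\varphi_{X'}$ is completely isometric, not merely bounded. The natural route is to note that the entry algebra sits inside the quotient algebra in a matricially compatible way: amplifying, $M_\ell(E(\cB))$ embeds completely isometrically into a corner of $M_\ell(M_{|X'|}(B(\cH))) = M_{\ell|X'|}(B(\cH))$, and the entrywise map $\varphi_{X'}$ at level $\ell$ is just the restriction of $\bar\Theta^{(\ell)}$ (an $\ell$-fold amplification, hence completely isometric) followed by a compression to the relevant block pattern, with the reverse inequality coming from $\bar\Theta^{-1}$. More precisely, for a matrix $[A_{pq}] \in M_\ell(E(\cB))$ one builds from it an element of $M_\ell$ of the quotient whose entrywise data recovers $[A_{pq}]$ under the $\theta_{x,y}$'s, applies $\bar\Theta^{(\ell)}$ which preserves norm, and reads off the image's entries; the two-sided nature of the construction forces norm preservation at every matrix level. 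The potential obstacle is bookkeeping: one must check that the ``spreading out'' of an entry-algebra matrix into a quotient-algebra matrix and the subsequent ``reading off'' of entries are themselves completely contractive in both directions, so that no norm is lost or gained — this is exactly the kind of compression/inclusion argument that is routine in spirit but where the index juggling (over $X'$ and over the matrix level $\ell$ simultaneously) must be set up cleanly. I expect the author handles this by appealing to the concrete representation already fixed in the two preceding lemmas rather than by an abstract argument.
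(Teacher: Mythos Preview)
Your first paragraph (descending $\Theta$ to a completely isometric isomorphism $\bar\Theta$ on the quotients that fixes $C(X')$ and hence commutes with the $P_x$'s) is correct and matches the paper exactly. The gap is in everything after that.

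The step ``assembling these entrywise maps gives a homomorphism $\varphi_{X'}$ between the entry algebras'' does not work. Knowing that $\bar\Theta$ acts entrywise tells you where each individual entry $\theta_{x,y}(B)$ goes, but the entry algebra $E(\cB)$ is the algebra \emph{generated} by all such entries inside $B(\cH)$: a typical element is a polynomial like $\theta_{x,y}(B)\cdot\theta_{x',y'}(B')$, and such products are not themselves entries of anything in $\cB$, so there is no induced map on them from $\bar\Theta$. In fact the paper explicitly warns, in the paragraph immediately following this lemma, that passage to entry algebras is \emph{not} functorial under completely isometric isomorphism in general (two subalgebras of $M_2(B(\cH))$ can be completely isometrically isomorphic while their entry algebras are $A(\mb D)$ and $A(\mb D)*A(\mb D)$). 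For the same reason your proposed norm argument via ``spreading out an entry-algebra matrix into a quotient-algebra matrix'' cannot be made to work: there is no completely isometric embedding of $E(\cB)$ back into $\cB$ to compress through.

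What the paper actually does is invoke Corollary~\ref{Cor:freeproduct}: the entry algebra is identified as a free product $*_{i=1}^m \fA_{n_i}$, whose generators are precisely the row isometries $\big(\theta_{x,y}(\pi_{(X',\sigma)}(\fs_i))\big)_{\sigma_i(x)=y}$. One then \emph{defines} $\varphi_{X'}$ on these generators by $\theta_{x,y}(\pi_{(X',\sigma)}(\fs_i)) \mapsto \theta_{x,y}(\pi_{(X',\tau)}(\Theta(\fs_i)))$; since $\Theta(\fs_i)$ is a contraction, the images form a set of row contractions, and the universal property of the free product yields a completely contractive homomorphism. Running the same construction from $\Theta^{-1}$ gives a completely contractive inverse on generators, hence $\varphi_{X'}$ is a completely isometric isomorphism. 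The free-product structure is the essential ingredient you are missing.
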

\begin{proof}
Such a completely isometric isomorphism $\Theta$, because it fixes $C_0(X)$, takes $J_{(X,\sigma)}$ isomorphically onto $J_{(X',\tau)}$. Hence, it induces a completely isometric isomorphism,  $\Theta_{X'}$,  between the quotients such that $\pi_{(X',\tau)}\circ \Theta = \Theta_{X'} \circ \pi_{(X',\sigma)}$. This implies that  $\Theta_{X'}|_{C(X')} = \id$ and so
\[
\Theta_{X'}(P_y\pi_{(X',\sigma)}(\fs_i)P_x)  =  P_y \pi_{(X', \tau)}(\Theta(\fs_i))P_x  \in P_y M_{|X'|}(E((C_0(X)\times_\tau \mb F_n^+)/J_{(X',\tau)}))P_x
\]
provides the required map on the generators of the entry algebra. In other words, define $\varphi_{X'}$ on the generators of the entry algebra by
\[
\varphi_{X'}(\theta_{x,y}(\pi_{(X',\sigma)}(\fs_i))) \ \ = \ \ \theta_{x,y}(\pi_{(X',\tau)}(\Theta(\fs_i))).
\]
$\Theta(\fs_i)$ is a contraction so $\big(\theta_{y,x}(\pi_{(X',\tau)}(\Theta(\fs_i)))\big)_{x\in X'}$ is a row contraction.
Recall that Davidson and Katsoulis showed that completely isometrically isomorphic algebras implied piecewise conjugate dynamical systems \cite{DavKat2} which shows that $\varphi_{X'}$ is non-zero on the generators.
This map is well defined since the structure of the entry algebras was shown to be a free product in the previous corollary. By the universal property of the free product of noncommutative disc algebras $\varphi_{X'}$ is a completely contractive homomorphism.  In the same way, $\Theta_{X'}^{-1}$ induces a completely contractive homomorphism from $E((C_0(X)\times_\tau \mb F_n^+)/J_{(X',\tau)})$ to $E((C_0(X)\times_\sigma \mb F_n^+)/J_{(X',\sigma)})$ that is the inverse on the generators. Therefore, $\varphi_{X'}$ is a completely isometric isomorphism.
\end{proof}

The existence of this entry algebra isomorphism is entirely dependent on the form of these algebras. For instance, if $U_1$ and $U_2$ are the generators of $A(\mb D) * A(\mb D)$ then the algebra generated by $U_1\oplus U_1$ and the algebra generated by $U_1\oplus U_2$ are both completely isometrically isomorphic to $A(\mb D)$ but their entry algebras are $A(\mb D)$ and $A(\mb D)*A(\mb D)$ respectively, which we know to be not isometrically isomorphic. Remarkably this is even false for C$^*$-algebras, Plastiras \cite{Plastiras} produces two non-isomorphic C$^*$-algebras that are isomorphic after tensoring with $M_2$.

%%%%%%%%
\begin{proposition}\label{Prop:permutation}
For a completely isometric isomorphism $\Theta : C_0(X) \times_\sigma \mb F_n^+ \rightarrow C_0(X) \times_\tau \mb F_n^+$ and $x\in X$ there exists a unique permutation $\alpha_{\Theta, x} \in S_n$ such that for every finite $X' \subset X$ with $x\in X'$ we have
\[
\pi_{(X', \tau)}(\Theta(\fs_i))P_x \in \overline{\alg}\{C(X'), \pi_{(X',\tau)}(\ft_{\alpha_{\Theta,x}(i)})\}P_x, \ \ 1\leq i\leq n.
\]
\end{proposition}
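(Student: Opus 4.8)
The plan is to extract the permutation $\alpha_{\Theta,x}$ from the entry-algebra isomorphism $\varphi_{X'}$ of Lemma~\ref{Lemma:entryalgebra}, using the rigidity of free-product automorphisms from Section~2, and then to show that the extracted permutation does not depend on the choice of finite $X'$. First I would fix a finite $X' \subset X$ with $x\in X'$ and set $y_i = \sigma_i(x)$. The entry algebra $E((C_0(X)\times_\sigma \mb F_n^+)/J_{(X',\sigma)})$ is, by Corollary~\ref{Cor:freeproduct}, a free product $*\,\fA_{n_k}$ of noncommutative disc algebras, one component for each $y\in r(E)\subseteq X'$, and the generator $\theta_{x,y_i}(\pi_{(X',\sigma)}(\fs_i))$ lies in the component indexed by $y_i$. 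The key observation is that $\varphi_{X'}$, being a completely isometric isomorphism of two such free products, must by the Corollary following Theorem~\ref{Thm:Autos} permute the free components and act as a completely isometric isomorphism $\fA_{n_k}\to\fA_{n'_{k'}}$ on each. Applied to the generators sitting over $x$: $\varphi_{X'}(\theta_{x,y_i}(\pi_{(X',\sigma)}(\fs_i))) = \theta_{x,y_i}(\pi_{(X',\tau)}(\Theta(\fs_i)))$ must be a generator (up to an $\fA_{n_k}$-automorphism, hence up to a fractional linear change of the generators of that component) of the $\tau$-side free product, and the component it lands in is the one indexed by whichever point equals $\tau_j(x)$. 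Thus there is a well-defined index $j$ with $\theta_{x,\tau_j(x)}(\pi_{(X',\tau)}(\Theta(\fs_i)))\neq 0$ and $\theta_{x,y}(\pi_{(X',\tau)}(\Theta(\fs_i))) = 0$ for all other $y$; set $\alpha_{\Theta,x}(i)=j$. That this is a bijection of $\{1,\dots,n\}$ follows because $\Theta^{-1}$ gives the inverse entry-algebra isomorphism, so the same construction applied to $\Theta^{-1}$ yields a two-sided inverse to $i\mapsto\alpha_{\Theta,x}(i)$ (this also uses piecewise conjugacy from \cite{DavKat2} to know each $\theta_{x,y}(\pi_{(X',\tau)}(\Theta(\fs_i)))$ is nonzero for exactly one $y$, preventing collapse).

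Next I would translate the statement ``$\theta_{x,\tau_j(x)}(\pi_{(X',\tau)}(\Theta(\fs_i)))$ lies in a single free component'' back into the displayed containment. Writing $P_x$ for the rank-one-block projection at $x$, the column $\pi_{(X',\tau)}(\Theta(\fs_i))P_x$ has at most one nonzero entry after conjugating by the block decomposition at $x$ — namely the $(\tau_j(x),x)$ entry, which equals $\theta_{x,\tau_j(x)}(\pi_{(X',\tau)}(\Theta(\fs_i)))$ — because any entry in row $y$ would be $\theta_{x,y}(\pi_{(X',\tau)}(\Theta(\fs_i)))$ and we have just shown this vanishes unless $y=\tau_j(x)$. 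Since $\theta_{x,\tau_j(x)}(\pi_{(X',\tau)}(\Theta(\fs_i)))$ is, as an element of the entry algebra, a (fractional-linear-transform of a) generator of the $\fA$-component over $\tau_j(x)$, and that component is generated inside $\overline{\alg}\{C(X'),\pi_{(X',\tau)}(\ft_{\alpha_{\Theta,x}(i)})\}$ — indeed $\pi_{(X',\tau)}(\ft_{\alpha_{\Theta,x}(i)})$ restricted to the relevant block is exactly the row isometry whose entries generate that component — we conclude $\pi_{(X',\tau)}(\Theta(\fs_i))P_x \in \overline{\alg}\{C(X'),\pi_{(X',\tau)}(\ft_{\alpha_{\Theta,x}(i)})\}P_x$. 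Some care is needed because the fractional linear transformation mixes the coordinates $\ft_{\alpha_{\Theta,x}(i),1},\dots,\ft_{\alpha_{\Theta,x}(i),n_{\alpha(i)}}$ within a single row, but does not mix across different values of $\alpha_{\Theta,x}(i)$, so the containment in the closed algebra generated by $C(X')$ together with the single row isometry $\pi_{(X',\tau)}(\ft_{\alpha_{\Theta,x}(i)})$ is unaffected.

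For independence of $X'$ and uniqueness, I would argue as follows. If $X'\subseteq X''$ are two finite subsets both containing $x$, then $J_{(X'',\sigma)}\subseteq J_{(X',\sigma)}$ and there is a natural quotient map compressing the $(C_0(X)\times_\sigma\mb F_n^+)/J_{(X'',\sigma)}$-picture onto the $X'$-picture, compatible with $\Theta$ and with the block at $x$; under this compression the $(\tau_j(x),x)$-entry of $\pi_{(X'',\tau)}(\Theta(\fs_i))$ maps to that of $\pi_{(X',\tau)}(\Theta(\fs_i))$, and it is nonzero (by piecewise conjugacy, $\Theta(\fs_i)$ genuinely ``moves'' $x$ to $\tau_{\alpha(i)}(x)$ for an appropriate $\alpha$) so the index $j$ computed in $X''$ agrees with that computed in $X'$. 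Since any two finite sets containing $x$ are both contained in a common finite set containing $x$, the permutation is the same for all choices, and uniqueness is then immediate: $\alpha_{\Theta,x}(i)$ is forced to be the unique $j$ with $\theta_{x,\tau_j(x)}(\pi_{(X',\tau)}(\Theta(\fs_i)))\neq 0$ for any one (hence every) admissible $X'$. The main obstacle I anticipate is the bookkeeping in the second paragraph: making precise that a completely isometric isomorphism of the two free-product entry algebras really does send the ``column over $x$'' of generators to the ``column over $x$'' of generators-up-to-automorphism, i.e. that the permutation of free components provided by the Corollary to Theorem~\ref{Thm:Autos} is compatible with the fixed vertex $x$ in the way needed — this is where the hypothesis that $\Theta$ fixes $C_0(X)$ (so that $\Theta_{X'}|_{C(X')}=\id$, hence $\varphi_{X'}$ respects the vertex labels) does the essential work, and it must be invoked carefully.
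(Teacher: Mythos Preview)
Your proposal is correct and follows essentially the same route as the paper: extract $\alpha_{\Theta,x}$ from the entry-algebra isomorphism $\varphi_{X'}$ via the rigidity Corollary to Theorem~\ref{Thm:Autos}, translate the free-component containment into the displayed matrix containment using that $\Theta_{X'}|_{C(X')}=\id$ (so $\pi_{(X',\tau)}(\Theta(\fs_i))P_x = P_{\sigma_i(x)}\pi_{(X',\tau)}(\Theta(\fs_i))P_x$), and prove independence of $X'$ by passing to a common finite superset and compressing. One small imprecision to clean up: the free components of the entry algebra are indexed not by points $y\in r(E)$ alone but by pairs $(i,y)$ with $y\in\sigma_i(X')\cap X'$ (distinct maps hitting the same $y$ give distinct components), so the permutation $\alpha_{\Theta,x}(i)$ is read off from \emph{which} $\ft_j$ generates the target component, not merely from the row index $\tau_j(x)$ --- you do get this right later in the argument, but the first paragraph's phrasing ``one component for each $y\in r(E)$'' and ``the component indexed by $y_i$'' should be adjusted.
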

\begin{proof}
Fix $x\in X$. First consider $X' = \{x, \sigma_1(x), \cdots, \sigma_n(x)\} = \{x, \tau_1(x), \cdots, \tau_n(x)\}$, because $(X,\sigma)$ and $(X,\tau)$ are piecewise conjugate.
Lemma \ref{Lemma:entryalgebra} shows that $\Theta$ induces a completely isometric isomorphism $\varphi_{X'} : E((C_0(X)\times_\sigma \mb F_n^+)/J_{(X',\sigma)}) \rightarrow E((C_0(X)\times_\tau \mb F_n^+)/J_{(X',\tau)})$. Corollary \ref{Cor:freeproduct} gives that $\varphi_{X'}$ is a completely isometric isomorphism of free products of noncommutative disc algebras. By the corollary to Theorem \ref{Thm:Autos} this implies that both entry algebras are made up of the same noncommutative disc algebras and after rearrangement $\varphi_{X'}$ is just a product of noncommutative disc algebra completely isometric automorphisms.

Therefore, there exists a permutation $\alpha\in S_{n\times|X'|}$ such that for each $1\leq i, j\leq n$, the row contraction $\big( \theta_{y,\sigma_j(x)}(\pi_{(X',\sigma)}(\fs_i))\big)_{y\in X'}$ is mapped into the noncommutative disc algebra generated by the row contraction $\big( \theta_{y,\tau_{j'}(x)}(\pi_{(X',\tau)}(\ft_{i'}))\big)_{y\in X'}$ where $\alpha(i,\sigma_i(x)) = (i', \tau_{j'}(x))$.

Define the required permutation $\alpha_{\Theta,x} \in S_n$ by $\alpha(i, \sigma_i(x)) = (\alpha_{\Theta,x}(i), \tau_j(x)), 1\leq i\leq n$. Thus, if $E_{y,z}$ is the matrix unit in $B(\cH^{|X'|})$ taking $\cH_z$ onto $\cH_y$, then
\[
\pi_{(X',\tau)}(\Theta(\fs_i))P_x = \Theta_{X'}(\pi_{(X',\sigma}(\fs_i))P_x 
\]
\[
= \Theta_{X'}(P_{\sigma_i(x)}\pi_{(X',\sigma)}(\fs_i)P_x)
= P_{\sigma_i(x)}\pi_{(X',\tau)}(\Theta(\fs_i))P_x   
\]
\[= \theta_{x, \sigma_i(x)}(\pi_{(X',\tau)}(\Theta(\fs_i)) E_{\sigma_i(x),x}
= \varphi_{X'}(\theta_{x, \sigma_i(x)}(\pi_{(X',\sigma)}(\fs_i)))E_{\sigma_i(x),x} 
\]
\[\in \overline\alg\{C(X'), \theta_{x, \tau_j(x)}(\pi_{(X',\tau)}(\ft_{\alpha_{\Theta,x}(i)}))E_{\tau_j(x),x}, 1\leq j\leq n\}P_x
\]
\[
= \overline\alg\{C(X'), \pi_{(X',\tau)}(\ft_{\alpha_{\Theta,x}(i)})P_x.
\]

Finally, we need to establish that this permutation is unique. Let $X'' \subset X$ be any finite set such that $x\in X''$ and let $\tilde X = X'' \cup X'$.
Again, because $\Theta$ is the identity on $C_0(X)$
\[
\pi_{(\tilde X, \tau)}(\Theta(\fs_i))P_x = \theta_{x,\sigma_i(x)}(\pi_{(\tilde X, \tau)}(\Theta(\fs_i)))E_{\sigma_i(x),x}
\]
\[
= \varphi_{\tilde X}(\theta_{x,\sigma_i(x)}(\pi_{(\tilde X,\sigma)}(\fs_i)))E_{\sigma_i(x),x}
\]
\[
\in \overline\alg\{ C(\tilde X), \theta_{x,y}(\pi_{(\tilde X,\tau)}(\ft_{j}))E_{y,x},  \forall y\in \tilde X\}P_x
\]
\[
= \overline\alg\{ C(\tilde X), \pi_{(\tilde X, \tau)}(\ft_j)\}P_x
\]
for some $1\leq j\leq n$. However, $P_{X'}(\pi_{(\tilde X,\sigma)})P_{X'}$ is identical to $\pi_{(X', \sigma)}$ (which is true as well when $\sigma$ is replaced by $\tau$) and so $j$ must be equal to $\alpha_{\Theta,x}(i)$ by the first argument. Finally, compression to $\cH^{(|X''|)} \subset \cH^{(|\tilde X|)}$ gives the desired conclusion.
\end{proof}

Out of the proof of the previous proposition we get what will become the pre-image condition:

%%%%%%%%
\begin{corollary}\label{Cor:pre-image}
For $x,y\in X$ if $\sigma_i(x) = \sigma_i(y)$ then $\alpha_{\Theta,x}(i) = \alpha_{\Theta, y}(i)$  and \\ if
$\tau_j(x) = \tau_j(y)$ then $\alpha^{-1}_{\Theta,x}(j) = \alpha^{-1}_{\Theta, y}(j)$.
\end{corollary}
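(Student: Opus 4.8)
The plan is to read both assertions off the proof of Proposition~\ref{Prop:permutation}, exploiting that the permutation $\alpha_{\Theta,x}$ is canonical (independent of the finite set used to compute it), so that the permutations attached to two different points can be compared inside one common finite set. Concretely, given $x,y\in X$ I would fix any finite $X'\subseteq X$ containing $x,y$ together with all the points $\sigma_k(x),\sigma_k(y),\tau_k(x),\tau_k(y)$, $1\le k\le n$. By Lemma~\ref{Lemma:entryalgebra} and Corollary~\ref{Cor:freeproduct}, $\Theta$ induces a completely isometric isomorphism $\varphi_{X'}$ between the two entry algebras, each of which is a free product of noncommutative disc algebras whose free factors are indexed by pairs (colour, point): on the $\sigma$-side the factor labelled $(k,v)$ is generated by the row isometry $\{\theta_{x',v}(\pi_{(X',\sigma)}(\fs_k)):x'\in X',\ \sigma_k(x')=v\}$, and likewise on the $\tau$-side. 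By the corollary to Theorem~\ref{Thm:Autos}, $\varphi_{X'}$ carries each free factor onto a single free factor; write $\alpha_{X'}$ for the resulting bijection of index sets. The uniqueness (enlargement) argument in the proof of Proposition~\ref{Prop:permutation}, applied with this $X'$ in place of the minimal set $\{x,\sigma_\bullet(x)\}$, says precisely that $\alpha_{X'}$ sends the factor $(k,\sigma_k(x))$ to the factor $\bigl(\alpha_{\Theta,x}(k),\sigma_k(x)\bigr)$ (the second coordinate is unchanged because $\Theta$ fixes $C_0(X)$, so by the covariance relation $\pi_{(X',\tau)}(\Theta(\fs_k))P_x=P_{\sigma_k(x)}\pi_{(X',\tau)}(\Theta(\fs_k))P_x$; note $\sigma_k(x)=\tau_{\alpha_{\Theta,x}(k)}(x)$ by piecewise conjugacy), and the same with $x$ replaced by $y$.

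Granting this, the first statement is immediate: if $\sigma_i(x)=\sigma_i(y)=:z$, then $(i,\sigma_i(x))$ and $(i,\sigma_i(y))$ are literally the same index $(i,z)$, so $\alpha_{X'}(i,z)$ is one well-defined pair, and comparing its first coordinate as computed via $x$ and via $y$ gives $\alpha_{\Theta,x}(i)=\alpha_{\Theta,y}(i)$.

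For the second statement I would apply the first statement to $\Theta^{-1}\colon C_0(X)\times_\tau\mb F_n^+\to C_0(X)\times_\sigma\mb F_n^+$, which again fixes $C_0(X)$ and whose entry-algebra isomorphism is $\varphi_{X'}^{-1}$, with associated bijection $\alpha_{X'}^{-1}$; this yields $\alpha_{\Theta^{-1},x}(j)=\alpha_{\Theta^{-1},y}(j)$ whenever $\tau_j(x)=\tau_j(y)$. To convert this into the stated form I need $\alpha_{\Theta^{-1},x}=\alpha_{\Theta,x}^{-1}$, which drops out of the same description: since $\alpha_{X'}(i,\sigma_i(x))=\bigl(\alpha_{\Theta,x}(i),\sigma_i(x)\bigr)$ and $\sigma_i(x)=\tau_{\alpha_{\Theta,x}(i)}(x)$, inverting gives that the first coordinate of $\alpha_{X'}^{-1}\bigl(\alpha_{\Theta,x}(i),\tau_{\alpha_{\Theta,x}(i)}(x)\bigr)$ is $i$, while the analogue of the setup paragraph for $\Theta^{-1}$ says it is $\alpha_{\Theta^{-1},x}(\alpha_{\Theta,x}(i))$; hence $\alpha_{\Theta^{-1},x}\circ\alpha_{\Theta,x}=\id$. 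Combining, $\alpha_{\Theta,x}^{-1}(j)=\alpha_{\Theta,y}^{-1}(j)$.

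The genuinely substantive point is the colour–point identification of $\alpha_{X'}$ asserted in the first paragraph, and there I would simply cite the chain of identities in the proof of Proposition~\ref{Prop:permutation} rather than repeat it: the colour is forced to be $\alpha_{\Theta,x}(k)$ because $\varphi_{X'}\bigl(\theta_{x,\sigma_k(x)}(\pi_{(X',\sigma)}(\fs_k))\bigr)=\theta_{x,\sigma_k(x)}(\pi_{(X',\tau)}(\Theta(\fs_k)))$ lies, by Proposition~\ref{Prop:permutation}, in the sub-free-product of the entry algebra generated by the entries of $\pi_{(X',\tau)}(\ft_{\alpha_{\Theta,x}(k)})$, and it is non-scalar (being the image of a free generator under an isomorphism), so by freeness of the colour blocks it cannot sit in a free factor of any other colour. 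Everything else — the dictionary between the quotient, its entry algebra, and the free product, and the invocation of the rigidity theorem — is routine given Lemma~\ref{Lemma:entryalgebra}, Corollary~\ref{Cor:freeproduct} and the corollary to Theorem~\ref{Thm:Autos}; the role of the common $X'$ is only that, once the colour identification is in hand, injectivity of $\alpha_{X'}$ finishes both cases.
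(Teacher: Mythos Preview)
Your proposal is correct and follows essentially the same approach as the paper: both observe that when $\sigma_i(x)=\sigma_i(y)=z$ the two entry-algebra generators $\theta_{x,z}(\pi_{(X',\sigma)}(\fs_i))$ and $\theta_{y,z}(\pi_{(X',\sigma)}(\fs_i))$ lie in the \emph{same} free factor, so the rigidity of $\varphi_{X'}$ sends them into a single target factor whose colour, by the uniqueness in Proposition~\ref{Prop:permutation}, must equal both $\alpha_{\Theta,x}(i)$ and $\alpha_{\Theta,y}(i)$; the second assertion is then reduced to the first via $\alpha_{\Theta^{-1},x}=\alpha_{\Theta,x}^{-1}$. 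The only differences are cosmetic: the paper works with the minimal set $X'=\{x,y,z\}$ and asserts $\alpha_{\Theta^{-1},x}=\alpha_{\Theta,x}^{-1}$ without further comment, whereas you take a larger $X'$, formalize the index bijection $\alpha_{X'}$, and spell out that identity.
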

\begin{proof}
If $z= \sigma_i(x) = \sigma_i(y)$ then for $X' = \{x,y,z\} \subset X$ we have that 
\[
(\phi_{x,z}(\pi_{(X',\sigma)}(\fs_i)), \ \phi_{y,z}(\pi_{(X',\sigma)}(\fs_i))
\]
is a row contraction of elements in $E((C_0(X) \times_\sigma \mb F_n^+)/J_{(X',\sigma)})$. As such $\varphi_{X'}$ will map this into the same noncommutative disc algebra generated by the row contraction 
\[
(\phi_{x,z}(\pi_{(X',\tau)}(\ft_j)), \phi_{y,z}(\pi_{(X',\tau)}(\ft_j)), \phi_{z,z}(\pi_{(X',\tau)}(\ft_j))) 
\]
for some $1\leq j\leq n$. The previous proposition dictates that $\alpha_{\Theta,x}(i) = j = \alpha_{\Theta, y}(i)$.

The last conclusion follows from the fact that $\alpha_{\Theta^{-1},x} = \alpha_{\Theta,x}^{-1}$.
\end{proof}

We now will show that these permutations $\alpha_{\Theta,x}$ change continuously with respect to $x$.

%%%%%%%%%%%%%%%%%%%%
\begin{proposition}\label{Prop:continuous}
Let $\alpha_\Theta : X \rightarrow S_n$ be defined as $\alpha_\Theta(x) = \alpha_{\Theta,x}$.
Then $\alpha_\Theta$ is a continuous function where $S_n$ has the discrete topology.
\end{proposition}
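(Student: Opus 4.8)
The plan is to fix $x_0 \in X$ and show that $\alpha_\Theta$ is constant on a neighbourhood of $x_0$. The key point is that the permutation $\alpha_{\Theta,x}$ was computed in Proposition~\ref{Prop:permutation} using only the finite sub-dynamical system on $X' = \{x, \sigma_1(x), \ldots, \sigma_n(x)\}$, and in fact only using the single column $\pi_{(X',\tau)}(\Theta(\fs_i))P_x$; so I want to ``spread out'' that computation to nearby points. First I would pass to a small clopen (or merely open) neighbourhood $U$ of $x_0$ on which the piecewise-conjugacy combinatorics of $(X,\sigma)$ and $(X,\tau)$ are locally constant: since $(X,\sigma)$ and $(X,\tau)$ are piecewise conjugate via the identity on $X$ (Remark~\ref{iandci} reductions), there is an open cover $\{V_\beta : \beta \in S_n\}$ with $\tau_i|_{V_\beta} = \sigma_{\beta(i)}|_{V_\beta}$, and I can choose $U$ contained in one such $V_\beta$ with $x_0 \in V_\beta$. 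On $U$ the maps $\sigma_1, \ldots, \sigma_n$ and $\tau_1, \ldots, \tau_n$ are continuous with, after shrinking, $\sigma_i(U)$ pairwise disjoint or coinciding in a pattern that does not depend on the point of $U$.

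Next I would fix a point $y_0 \in U$ and compare $\alpha_{\Theta, y_0}$ with $\alpha_{\Theta, x_0}$. The permutation $\alpha_{\Theta,x_0}$ is characterized by the containment
\[
\pi_{(X', \tau)}(\Theta(\fs_i))P_{x_0} \in \overline{\alg}\{C(X'), \pi_{(X',\tau)}(\ft_{\alpha_{\Theta,x_0}(i)})\}P_{x_0}, \quad 1 \le i \le n,
\]
for $X' = \{x_0, \sigma_1(x_0), \ldots, \sigma_n(x_0)\}$, and this is detected at the level of the entry-algebra isomorphism $\varphi_{X'}$ of Lemma~\ref{Lemma:entryalgebra}, which by Corollary~\ref{Cor:freeproduct} and the corollary to Theorem~\ref{Thm:Autos} is a product of noncommutative disc algebra automorphisms composed with a permutation of the components. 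The strategy is: take the three-point set $Z = \{x_0, y_0\} \cup \sigma(\{x_0,y_0\})$ (finitely many points), apply Proposition~\ref{Prop:permutation} with $X' = Z$ to both $x_0$ and $y_0$, and observe that since $x_0, y_0$ lie in the same piece $V_\beta$, the rows $\big(\theta_{z, \sigma_i(x_0)}(\pi_{(Z,\sigma)}(\fs_i))\big)_{z}$ and $\big(\theta_{z, \sigma_i(y_0)}(\pi_{(Z,\sigma)}(\fs_i))\big)_{z}$ sit inside the same copy of the free product in a way that the single permutation $\varphi_Z$ provides; hence the component-permutation part of $\varphi_Z$ forces $\alpha_{\Theta, x_0}(i)$ and $\alpha_{\Theta, y_0}(i)$ to agree whenever $\sigma_i$ behaves the same way at $x_0$ and $y_0$. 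Because $y_0 \in U \subseteq V_\beta$ was arbitrary, $\alpha_\Theta$ is constant on $U$, giving continuity (and $S_n$ discrete makes continuity the same as locally constant).

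The main obstacle I anticipate is the bookkeeping in the middle step: one must be careful that the row-contraction indexed over the enlarged finite set $Z$ really does ``remember'' the column at $x_0$ as a compression, which is precisely the uniqueness argument at the end of Proposition~\ref{Prop:permutation} (compression $P_{X'}(\pi_{(Z,\sigma)})P_{X'} = \pi_{(X',\sigma)}$), applied now simultaneously at $x_0$ and $y_0$. The cleanest route is probably: show directly that for $y$ in a suitable neighbourhood of $x_0$, the element $\theta_{y, \sigma_i(y)}(\pi_{(Z,\tau)}(\Theta(\fs_i)))$ lies in the disc algebra generated by $\theta_{y,\sigma_i(y)}(\pi_{(Z,\tau)}(\ft_{\alpha_{\Theta,x_0}(i)}))$ — not some other $\ft_j$ — because the matrix pattern of $\pi_{(Z,\tau)}(\Theta(\fs_i))$ (which nonzero $\theta_{x,y}$ entries occur) is locally constant on $V_\beta$ and the free-product rigidity of Theorem~\ref{Thm:Autos} pins down which component maps to which. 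Everything else (continuity of the $\sigma_i, \tau_i$, local constancy of the disjointness pattern, the reduction to a single piece $V_\beta$) is routine point-set topology and the already-established piecewise conjugacy.
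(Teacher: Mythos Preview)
There is a genuine gap in your approach. Your key claim is that for $y_0$ near $x_0$, the rows
\[
\big(\theta_{z, \sigma_i(x_0)}(\pi_{(Z,\sigma)}(\fs_i))\big)_{z}\quad\text{and}\quad \big(\theta_{z, \sigma_i(y_0)}(\pi_{(Z,\sigma)}(\fs_i))\big)_{z}
\]
sit in the same component of the free product, so that the permutation part of $\varphi_Z$ must treat them the same way. But this is false in general: if $\sigma_i(x_0)\neq\sigma_i(y_0)$, these rows generate \emph{different} factors $\fA_{n_{\sigma_i(x_0),i}}$ and $\fA_{n_{\sigma_i(y_0),i}}$ of the entry algebra (Corollary~\ref{Cor:freeproduct} indexes the factors by pairs $(y,i)$ with $y\in Z$), and the rigidity theorem only tells you that $\varphi_Z$ permutes factors; it gives no relation between how it permutes the $x_0$-factor and the $y_0$-factor. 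The only situation in which your argument goes through is $\sigma_i(x_0)=\sigma_i(y_0)$, and that is precisely Corollary~\ref{Cor:pre-image}, not continuity. Being in the same piece $V_\beta$ of the piecewise-conjugacy cover, or having the same ``matrix pattern'', does not help: the free-product automorphism theory sees only the abstract free-product decomposition, not the $(x,y)$-coordinates that produced it.

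What is missing is a device that links the quotients at different points through something \emph{continuous} on $X$. The paper obtains this from the global Fourier expansion: writing $\Phi_k(\Theta(\fs_i))=\sum_{|w|=k}\ft_w f_w$ in $C_0(X)\times_\tau\mb F_n^+$ via the gauge action and Ces\`aro means, one shows (using Proposition~\ref{Prop:permutation} at a suitable finite $X'$) that $f_w(x)\neq 0$ iff $w=\alpha_{\Theta,x}(i)^k$. Since each $f_w\in C_0(X)$ is continuous, a sequence $x_k\to x$ with $\alpha_\Theta(x_k)\equiv\alpha$ would force $f_{\alpha_{\Theta,x}(i)^k}(x_m)\neq 0$ for some $m$, hence $\alpha(i)=\alpha_{\Theta,x}(i)$. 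The continuity thus comes from the coefficient functions $f_w$, not from comparing two different finite quotients; your proposal never invokes such global data, and without it the argument cannot close.
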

\begin{proof}
Let $\{x_k\} \in X$ be a sequence converging to $x\in X$ such that $\alpha_\Theta(x_k) = \alpha \in S_n, \forall k\geq 1$. Assume that $\alpha_{\Theta, x} \neq \alpha$ and so there exists $1\leq i\leq n$ such that $\alpha_{\Theta, x}(i) \neq \alpha(i)$.

In the same way as in the previous section we can define the gauge automorphisms of $C_0(X) \times_\tau \mb F_n^+$ to be $\gamma_z, z = (z_1,\cdots, z_n) \in \mb T^n$ by letting $\gamma_z(\fs_i) = z_iS_i$ and extending to a completely isometric isomorphism by the universal property.
Again, define for $z = (z,\cdots, z) \in \mb T^n$
\[
\Phi_k(a) \ \ = \ \ \int_{\mb T} \gamma_z(a) \overline{z}^k dz,
\]
the completely contractive projection onto words of length $k$ of the generators $S_1,\cdots, S_n$. Finally, the sum of these projections converges to $a$ by way of the Cesaro means, giving us that $\Phi_k(a) = 0, \forall k\geq 1$ if and only if $a = 0$.

Now, look at $\Theta(\fs_i)$ where we have $\alpha_{\Theta, x}(i) \neq \alpha(i)$. $\Theta(\fs_i) \notin C_0(X)$ since $\Theta^{-1}|_{C_0(X)} = \id$ there exists $k\geq 1$ such that 
\[
\Phi_k(\Theta(\fs_i)) = \sum_{w\in \mb F_n^+, |w| \leq k} \fs_w f_w \neq 0.
\] 
Claim: $f_w(x) \neq 0$ if and only if $w = \alpha_{\Theta, x}(i)^k$.
\vskip 6 pt

\noindent Verification: Let $X' = \{x\} \cup \{ \sigma_w(x) : w\in \mb F_n^+, |w| \leq k\} \subseteq X$. 
Hence, 
\[
\pi_{(X',\sigma)}(\fs_wf_w)P_x = \pi_{(X',\sigma)}(\fs_w)f_w(x)P_x = 0 \ \ \  \textrm{if and only if} \ \  \ f_w(x) = 0
\]
because $\pi_{(X',\sigma)}(\fs_w)P_x \neq 0$.
Now, Proposition \ref{Prop:permutation} says that $\pi_{(X', \tau)}(\Theta(\fs_i))P_x\neq 0$ is in the algebra generated by $T_{\alpha_{\Theta,x}(i)}$ times $P_x$. 

Next, we can define the projections $\Phi_k$ for the quotient as well such that $\Phi_k \circ\pi_{(X',\tau)}  = \pi_{(X',\tau)} \circ\Phi_k$. 
Moreover, if $A\in \overline\alg\{C(X'), \pi_{(X',\tau)}(T_j)\}$ then so is $\Phi_k(A)$.
Thus, 
\[
\pi_{(X',\tau)}(\Phi_k(\Theta(\fs_i)))P_x = \Phi_k(\Theta(\pi_{(X',\sigma)}(\fs_i)))P_x \in \overline\alg\{C(X'), \pi_{(X',\tau)}(\ft_{\alpha_{\Theta,x}(i)})\}P_x.
\]
The claim follows.

Lastly, the $f_w$'s are continuous functions so there is a $m\geq 1$ such that $f_w(x_m) \neq 0$ for $w = \alpha_{\Theta, x}(i)^k$ as well. The previous claim is still true when we replace $x$ with $x_m$. However, this implies that $\alpha = \alpha_{\Theta, x_m}(i) = \alpha_{\Theta, x}(i)$, a contradiction. Therefore, $\alpha_\Theta$ is a continuous function.
\end{proof}

Finally, the second main result of the paper, that the semicrossed product algebra completely characterizes multivariable dynamical systems up to partition conjugacy.

%%%%%%%%
\begin{theorem}
The dynamical systems $(X,\sigma)$ and $(Y,\tau)$ are partition conjugate if and only if $C_0(X) \times_\sigma \mb F_n^+$ and $C_0(Y) \times_\tau \mb F_n^+$ are completely isometrically isomorphic.
\end{theorem}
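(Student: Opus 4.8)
The plan is to prove both directions, one of which is already done. The forward direction, partition conjugate $\Rightarrow$ completely isometrically isomorphic, is precisely Theorem \ref{Thm:conjugacy}, so nothing remains there. The substantive content is the converse: given a completely isometric isomorphism $\Theta : C_0(X) \times_\sigma \mb F_n^+ \rightarrow C_0(Y) \times_\tau \mb F_n^+$, produce a partition conjugacy. As arranged earlier in this section, I would first reduce to the case $X = Y$ with $\Theta|_{C_0(X)} = \id$, using that $\Theta$ carries $C_0(X)$ $*$-isomorphically onto $C_0(Y)$ and that conjugating $(Y,\tau)$ by the resulting homeomorphism $\gamma$ does not change the isomorphism class of the semicrossed product. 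So $\gamma$ is the underlying homeomorphism of the partition conjugacy to be built, and it remains to construct the clopen sets $V_{i,j}$ and verify the four axioms.

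The key device is Proposition \ref{Prop:permutation}: for each $x \in X$ there is a unique permutation $\alpha_{\Theta,x} \in S_n$, and by Proposition \ref{Prop:continuous} the map $x \mapsto \alpha_{\Theta,x}$ is locally constant. Then I would define, for $1 \le i, j \le n$,
\[
V_{i,j} = \{ x \in X : \alpha_{\Theta,x}(i) = j \},
\]
which are clopen by continuity of $\alpha_\Theta$ into the discrete $S_n$. Axioms (1) and (2) are then immediate from the fact that each $\alpha_{\Theta,x}$ is a bijection: fixing $i$ and letting $j$ vary partitions $X$, and fixing $j$ and letting $i$ vary also partitions $X$. Axiom (3), that $\sigma_i|_{V_{i,j}} = \gamma^{-1}\circ\tau_j\circ\gamma|_{V_{i,j}}$ — which here with $X=Y$, $\gamma=\id$ reads $\sigma_i|_{V_{i,j}} = \tau_j|_{V_{i,j}}$ — should come out of combining the permutation data with the piecewise conjugacy already guaranteed by Davidson–Katsoulis: piecewise conjugacy says on each cell of some open cover $\gamma\tau_i\gamma^{-1}$ agrees with $\sigma_{\beta(i)}$ for some $\beta \in S_n$, and one checks that the relevant $\beta$ at $x$ is forced to be $\alpha_{\Theta,x}^{-1}$ (or its inverse, depending on convention) by examining $\pi_{(X',\tau)}(\Theta(\fs_i))P_x$ for $X'$ containing $x$ and the relevant images.

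Axiom (4), the pre-image condition $\sigma_i^{-1}(\sigma_i(V_{i,j})) = V_{i,j}$ and its $\tau$-side counterpart, is where Corollary \ref{Cor:pre-image} does the work: if $x \in V_{i,j}$, i.e. $\alpha_{\Theta,x}(i) = j$, and $\sigma_i(y) = \sigma_i(x)$, then Corollary \ref{Cor:pre-image} gives $\alpha_{\Theta,y}(i) = \alpha_{\Theta,x}(i) = j$, so $y \in V_{i,j}$; this is exactly $\sigma_i^{-1}(\sigma_i(V_{i,j})) \subseteq V_{i,j}$, and the reverse inclusion is trivial. The $\tau$-side uses the second half of Corollary \ref{Cor:pre-image} together with the identity $\alpha_{\Theta^{-1},x} = \alpha_{\Theta,x}^{-1}$ applied to $\Theta^{-1}$; here one should note that $\Theta^{-1}$ satisfies the same hypotheses, so the symmetric statement for $\tau_j$ preimages holds. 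I expect the main obstacle to be Axiom (3): pinning down that the permutation extracted abstractly from the free-product rigidity of Section 2 is literally the piecewise-conjugacy permutation at each point, i.e. that $\alpha_{\Theta,x}$ is not merely some bookkeeping device but actually records which $\tau_j$ equals which $\sigma_i$ near $x$. This requires carefully unwinding the chain $\pi_{(X',\tau)}(\Theta(\fs_i))P_x \in \overline{\alg}\{C(X'), \pi_{(X',\tau)}(\ft_{\alpha_{\Theta,x}(i)})\}P_x$ for $X' = \{x, \sigma_1(x), \dots, \sigma_n(x)\}$ and comparing, entrywise via the $\theta_{x,y}$ maps, with where $\Theta(\fs_i) = \fs_i$-type data forces $\sigma_i(x)$ to land; the nonvanishing criterion $\theta_{x,y}(\pi_{(X',\tau)}(\ft_k)) \neq 0 \iff \tau_k(x) = y$ then converts the algebraic containment into the equality $\tau_{\alpha_{\Theta,x}(i)}(x) = \sigma_i(x)$ for all $x$, which is Axiom (3).
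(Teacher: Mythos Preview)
Your proposal is correct and follows the same architecture as the paper: reduce to $X=Y$ with $\Theta|_{C_0(X)}=\id$, set $V_{i,j}=\{x:\alpha_{\Theta,x}(i)=j\}$, obtain clopenness from Proposition~\ref{Prop:continuous}, axioms (1)--(2) from bijectivity of each $\alpha_{\Theta,x}$, and axiom (4) from Corollary~\ref{Cor:pre-image}. For axiom (3) the paper does not go through Davidson--Katsoulis piecewise conjugacy at all; it simply cites the proof of Proposition~\ref{Prop:continuous} (and implicitly Proposition~\ref{Prop:permutation}) for the equality $\sigma_i(x)=\tau_{\alpha_{\Theta,x}(i)}(x)$, which is precisely the direct entry-algebra/nonvanishing argument you outline in your final paragraph --- so your detour through piecewise conjugacy is unnecessary, but your second route is the paper's route.
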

\begin{proof}
Theorem \ref{Thm:conjugacy} proves one direction. So assume that $\Theta : C_0(X) \times_\sigma \mb F_n^+ \rightarrow C_0(Y) \times_\tau \mb F_n^+$ is a completely isometric isomorphism. Again, without loss of generality we can assume that $X = Y$ and that $\Theta|_{C_0(X)} = id|_{C_0(X)}$.

Define
\[
V_{i,j} \ = \ \{x\in X: \alpha_{\Theta, x}(i) = j\}, \ 1\leq i,j\leq n,
\]
where the $\alpha_{\Theta,x}$ is the canonical permutation obtained from $\Theta$ and $x$ found in Proposition \ref{Prop:permutation}. Since this runs through all of $X$ we have that
\[
\dot\cup_{j=1}^n V_{i,j} = X, 1\leq i\leq n, \ \ {\rm and} \ \ \dot\cup_{i=1}^n V_{i,j}  = X, 1\leq j\leq n.
\]
Furthermore, $\alpha_\Theta^{-1}(\{\gamma\})$ is clopen for every $\gamma \in S_n$ by Proposition \ref{Prop:continuous} and so 
\[
V_{i,j} = \cup_{\gamma(i) = j, \gamma\in S_n} \alpha_\Theta^{-1}(\{\gamma\})
\]
is clopen as well. In the proof of the same proposition it was shown that for every $x\in V_{i,j}$ we have that $\sigma_i(x) = \tau_j(x)$.

Lastly, if $x\in \sigma_i^{-1}(\sigma_i(V_{i,j})$ then there exists $y\in V_{i,j}$ such that $\sigma_i(x) = \sigma_i(y)$. Corollary \ref{Cor:pre-image} then says that $\alpha_{\Theta,x}(i) = \alpha_{\Theta, y}(i) = j$ and so $x\in V_{i,j}$. Repeat the same argument to obtain $\tau_j^{-1}(\tau_j(V_{i,j})) = V_{i,j}$ as well. Therefore, $\{V_{i,j}\}$ is the desired partition and the two dynamical systems are partition conjugate.
\end{proof}

%%%%%%%%%%%%%%%%%%%%%%%%%%%%%%%%%%%%
%%%%%%%%%%%%%%%%%%%%%%%%%%%%%%%%%%%%
%%%%%%%%%%%%%%%%%%%%%%%%%%%%%%%%%%%%

\section{The tensor algebra}

We conclude this paper by showing when the tensor algebra and the semicrossed product algebra of a dynamical system $(X,\sigma)$ are completely isometrically isomorphic, namely if and only if the $\sigma_i$ have pairwise disjoint ranges. This is achieved by the following representation theory.

Introduced independently in \cite{KatsKribs} and \cite{Solel}, nest representations form an important class in the representation theory of non-selfadjoint operator algebras. To be precise a nest representation is a representation whose lattice of invariant subspaces forms a nest, that is, is linearly ordered. For instance, irreducible representations are nest representations 

In \cite{DavKat1} and \cite{DavKat2}, 2-dimensional nest representations with lattice of invariant subspaces equal to $\{\{0\}, \mb Ce_1, \mb C^2\}$ were used to great effect. We generalize this idea in the following manner:

\begin{definition}
Consider the following subalgebra of the upper triangular matrices, $\cT_m$
\[
\cN_m \ \ = \ \ \left\{ \left[\begin{array}{cccc} x_{1,1} & x_{1,2}& \cdots & x_{1,m} \\  & x_{2,2}   \\ &&\ddots \\ &&&x_{m,m} \end{array}\right] : x_{i,i}, x_{1,i} \in \mathbb C, 1\leq i\leq m \right\}.
\]
If $\cA$ is an operator algebra, let $\rep_{\cN_m}(\cA)$ denote the collection of completely contractive homomorphisms $\rho : \cA \rightarrow \cT$ such that compression to the $(i, j)$ entry, call it $\rho_{i,j}$, is a norm 1 linear functional, for all $1\leq i = j\leq m$ and $i=1, 2\leq j\leq m$.
\end{definition}

The avid reader will note that this is no longer a nest representation for $m\geq 3$.
Notice that compression to any of the diagonal entries is a non-trivial homomorphism into $\mb C$, that is, a character in $\cM(\cA)$. Recall that the only characters of $C_0(X)$ are point evaluations. Thus , for $\rho \in \cM(C_0(X) \times_\sigma \mb F_n^+)$, there exists $x\in X$ such that $\rho$ restricted to $C_0(X)$ is point evaluation at $x$. In \cite{DavKat2} it was shown that the set of characters corresponding to point evaluation at $x\in X$ is homeomorphic to $\overline{\mb D}^k$ where $k$ is the number of maps that fix the point $x$.
Hence, for $\rho \in \rep_{\cN_m}(C_0(X) \times_\sigma \mathbb F^n_+)$ there exist $x_1,\cdots, x_m \in X$ such that $\rho_{i,i}(f) = f(x_i), \forall f\in C_0(X)$. Note that $\rho(C_0(X))$ is diagonal because $\rho$ is completely contractive and so a $*$-homomorphism on $C_0(X)$.

For $1\leq i\leq m$ consider the map $\theta_i(A) = \left[\begin{array}{cc} \rho_{1,1}(A) & \rho_{1,i+1}(A) \\ 0 & \rho_{i+1,i+1}(A) \end{array}\right]$. $\theta$ is a nest representation and by \cite[Lemma 3.15]{DavKat2} there exists $1\leq j\leq n$ such that $\sigma_j(x_i) = x_1$. 
In light of this, for ${\bf x} = (x_1,\cdots, x_m)$ denote $\rep_{x, {\bf x}}(C_0(X) \times_\sigma \mb F_n^+)$ to be the subset of $\rep_{\cN_{m+1}}$ such that $\rho_{1,1}$ is a character corresponding to $x$ and for $1\leq i\leq m$, $\rho_{i+1,i+1}$ is a character corresponding to $x_i$. In other words, if $\rep_{x, \bf x}(C_0(X) \times_\sigma \mb F_n^+)$ is non-empty then $\{x_1,\cdots, x_m\} \subset \cup_{i=1}^n \sigma^{-1}_i(\{x\})$.

The following example was inspired by \cite{Duncan2} where they define a similar 3x3 representation to identify when two edges of an edge-colored directed graph belong to different colors.

%%%%%%%
\begin{example}\label{Example:Colours}
Let $(X,\sigma)$ be a dynamical system with $x, x_1,\cdots, x_m\in X$ for $m\leq n$ and suppose that $\sigma_i(x_i) = x$ for $1\leq i\leq m$. Define a representation $\rho$ by 
\[
\rho(f) = \left[\begin{array}{cccc} f(x) & 0 & \cdots & 0 \\  & f(x_1)  \\  &  & \ddots \\ &&& f(x_m) \end{array}\right] \ \ {\rm and} 
\ \ \rho(\mathfrak s_k) = \left[\begin{array}{cccc} 0 & \delta_{1k} & \cdots & \delta_{mk} \\  &0 \\ &&\ddots \\ &&&0 \end{array}\right]
\]
where $\delta_{ik}$ is the Kronecker delta function. Thus, for $A = \sum_{w\in \mb F_n^+} \mathfrak s_w f_w$ where all but finitely many of the $f_w \in C_0(X)$ are 0
\[
\rho(A)  = \left[\begin{array}{cccc} f_0(x) & f_1(x_1) & \cdots & f_m(x_m) \\  & f_0(x_1) \\ &&\ddots \\ &&&f_0(x_m) \end{array}\right]
\]
is a completely contractive homomorphism onto $\cN_{m+1}$ which clearly satisfies the $\rho_{i,j}$ norm 1 condition. Therefore, $\rho$ extends to a c.c. homomorphism on all of $C_0(X) \times_\sigma \mb F_n^+$ and so, for ${\bf x} = (x_1,\cdots, x_m)$, we have $\rho \in \rep_{x, \bf x}(C_0(X) \times_\sigma \mathbb F^n_+)$.
\end{example}

Although not necessary to prove the main theorem of this section we have the following converse to the previous example.

%%%%%%%%
\begin{proposition}\label{Prop:Colours}
Let $\cA = C_0(X) \times_\sigma \mb F_n^+$, $x,x_1,\cdots, x_m \in X$ and ${\bf x} = (x_1,\cdots, x_m)$. If $\rep_{x,\bf x}(\cA)$ is non-empty then there exist distinct $1\leq i_1, \cdots, i_m \leq n$ such that $\sigma_{i_1}(x_1) = \cdots = \sigma_{i_m}(x_m) = x$.
\end{proposition}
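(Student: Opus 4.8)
The plan is to reduce the statement to Hall's marriage theorem. I would fix $\rho\in\rep_{x,{\bf x}}(\cA)$, set $\beta^{(j)}_k:=\rho_{1,j+1}(\mathfrak s_k)$ for $1\le j\le m$ and $1\le k\le n$, and form the bipartite graph $G$ with left vertices $\{1,\dots,m\}$ (the points $x_1,\dots,x_m$), right vertices $\{1,\dots,n\}$ (the maps), and an edge $j\sim k$ exactly when $\beta^{(j)}_k\neq 0$. A system of distinct representatives for $G$ saturating the left side is precisely a choice of distinct $i_1,\dots,i_m$ with $\beta^{(j)}_{i_j}\neq 0$; so the proof is complete once Hall's condition is verified and one observes that $\beta^{(j)}_k\neq 0$ forces $\sigma_k(x_j)=x$. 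The latter is the covariance step: comparing the $(1,j+1)$ entries on the two sides of $\rho(f)\rho(\mathfrak s_k)=\rho(\mathfrak s_k)\rho(f\circ\sigma_k)$ and using that $\rho(f)=\diag(f(x),f(x_1),\dots,f(x_m))$ gives $f(x)\,\beta^{(j)}_k=f(\sigma_k(x_j))\,\beta^{(j)}_k$ for every $f\in C_0(X)$, whence $\beta^{(j)}_k\neq 0\Rightarrow\sigma_k(x_j)=x$.

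Hall's condition will follow by a double count from two estimates: row sums $\le 1$ and column sums $\ge 1$ for the array $(|\beta^{(j)}_k|^2)$. For the row bound I would write $\rho(\mathfrak s_k)=D_k+e_1\nu_k^{*}$ with $D_k$ diagonal and $\nu_k=\sum_{j=1}^m\overline{\beta^{(j)}_k}\,e_{j+1}\perp e_1$ (this is the general form of an element of $\cN_{m+1}$); if $\nu_k\neq 0$ then $\rho(\mathfrak s_k)\bigl(\nu_k/\|\nu_k\|\bigr)=D_k(\nu_k/\|\nu_k\|)+\|\nu_k\|\,e_1$, and since the first summand is orthogonal to $e_1$ the $e_1$-component of the image has modulus $\|\nu_k\|$; as $\rho(\mathfrak s_k)$ is a contraction, $\sum_{j=1}^m|\beta^{(j)}_k|^2=\|\nu_k\|^2\le 1$ for every $k$.

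For the column bound I claim $\sum_{k=1}^n|\beta^{(j)}_k|^2\ge 1$ for each $j$, which is the heart of the matter, and here I would use that $\rho_{1,j+1}$ is a norm-one functional on $\cA$. Expanding $a\in\cA$ in its Fourier series $a=\sum_{w\in\mb F_n^+}\mathfrak s_w a_w$ (with $a_w\in C_0(X)$, the partial Ces\`aro means converging to $a$), one has $\rho_{1,j+1}(\mathfrak s_w a_w)=(\rho(\mathfrak s_w))_{1,j+1}\,a_w(x_j)$, hence $\rho_{1,j+1}(a)=\sum_{w}(\rho(\mathfrak s_w))_{1,j+1}\,a_w(x_j)$. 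On the other hand the covariant representation of $(X,\sigma)$ on $\ell^2(\mb F_n^+)$ given by $\pi(f)e_w=f(\sigma_w(x_j))e_w$ and $V_ke_w=e_{kw}$ (Fock-type, anchored at $x_j$) shows $\|a\|\ge\bigl\|\sum_w a_w(x_j)\,e_w\bigr\|=\bigl(\sum_w|a_w(x_j)|^2\bigr)^{1/2}$; with Cauchy--Schwarz this gives $1=\|\rho_{1,j+1}\|\le\bigl(\sum_{|w|\ge 1}|(\rho(\mathfrak s_w))_{1,j+1}|^2\bigr)^{1/2}$. When no $\sigma_k$ fixes $x$ or $x_j$, the diagonal entries $\rho_{1,1}(\mathfrak s_k)$ and $\rho_{j+1,j+1}(\mathfrak s_k)$ all vanish (same covariance argument), so $(\rho(\mathfrak s_w))_{1,j+1}=0$ whenever $|w|\ge 2$ and the surviving terms are exactly $(\rho(\mathfrak s_k))_{1,j+1}=\beta^{(j)}_k$, proving the claim. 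Granting the two bounds, for any $S\subseteq\{1,\dots,m\}$ with $N(S):=\{k:\beta^{(j)}_k\neq 0\text{ for some }j\in S\}$ we get
\[
|S|\ \le\ \sum_{j\in S}\sum_{k=1}^n|\beta^{(j)}_k|^2\ =\ \sum_{k\in N(S)}\sum_{j\in S}|\beta^{(j)}_k|^2\ \le\ \sum_{k\in N(S)}1\ =\ |N(S)|,
\]
so Hall's theorem yields distinct $i_1,\dots,i_m$ with $\beta^{(j)}_{i_j}\neq 0$, and the covariance step gives $\sigma_{i_j}(x_j)=x$.

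The step I expect to be the genuine obstacle is the column bound $\sum_k|\beta^{(j)}_k|^2\ge 1$ when some $\sigma_k$ has $x$ or $x_j$ as a fixed point. Then $\rho_{1,j+1}$ really does have nonzero higher-order Fourier coefficients $(\rho(\mathfrak s_w))_{1,j+1}$ with $|w|\ge 2$ --- sums over ``chains'' passing through the diagonal entries $\rho_{1,1}(\mathfrak s_k)$ and $\rho_{j+1,j+1}(\mathfrak s_k)$ --- so the Fock estimate above, which only bounds $\|\rho_{1,j+1}\|$ by the $\ell^2$-norm of \emph{all} of its Fourier coefficients, is no longer sharp: the true semicrossed-product norm is strictly larger at fixed points because of the $\ell^1$-behaviour of a single contraction there. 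To close the gap one should either replace the Fock representation by one realizing this sharper lower bound for $\|a\|$, or compress $\rho$ to rows and columns $\{1,j+1\}$ and analyze the resulting two-dimensional nest representations in the style of \cite[Lemma 3.15]{DavKat2} together with the geometric series coming from the fixed-point loops; either route should again produce $\sum_k|\beta^{(j)}_k|^2\ge 1$, after which the rest of the argument is unchanged.
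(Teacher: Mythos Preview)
Your Hall's-theorem strategy is attractive, but the column bound $\sum_k|\beta^{(j)}_k|^2\ge 1$ that drives the double count is \emph{false} in the fixed-point case, and neither of your proposed fixes can recover it. Take $n=m=1$, $X=\{x\}$ with $\sigma_1(x)=x$, so $\cA\simeq A(\mb D)$; the representation determined by $\rho(\mathfrak s_1)=\begin{sbmatrix}1/2&3/4\\0&1/2\end{sbmatrix}$ is completely contractive (the matrix has norm exactly $1$), and since $\rho_{1,2}(f)=\tfrac34\,f'(\tfrac12)$ one gets $\|\rho_{1,2}\|=\tfrac34\cdot(1-\tfrac14)^{-1}=1$, so $\rho\in\rep_{x,(x)}(\cA)$; yet $\sum_k|\beta^{(1)}_k|^2=9/16<1$. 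Thus the gap you flagged is not a technicality to be closed by a sharper norm estimate: the inequality itself fails, and the double count cannot proceed as written.

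The paper sidesteps any such quantitative bound. For each $j$ it compresses $\rho$ to the $2\times2$ block $\theta_j$ on rows and columns $\{1,j{+}1\}$ and uses a \emph{rescaling} trick: if every generator $\mathfrak s_k$ with $\beta^{(j)}_k\neq0$ satisfied $\|\theta_j(\mathfrak s_k)\|<1$, then multiplying all the $(1,j{+}1)$ entries by a factor slightly exceeding $1$ would still yield contractions obeying the covariance relations, and universality would produce a c.c.\ homomorphism whose $(1,j{+}1)$ functional has norm strictly greater than $\|\rho_{1,j+1}\|=1$, a contradiction. Hence some $k=i_j$ has $\|\theta_j(\mathfrak s_{i_j})\|=1$ with $\beta^{(j)}_{i_j}\neq0$. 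Now contractivity of the full matrix $\rho(\mathfrak s_{i_j})\in\cN_{m+1}$, together with the fact that its $\{1,j{+}1\}$-compression already has norm $1$ and $\beta^{(j)}_{i_j}\neq0$, forces $\beta^{(j')}_{i_j}=0$ for every $j'\neq j$ (a short $2\times3$ norm computation). So the assignment $j\mapsto i_j$ is automatically injective---no Hall required---and the covariance step, which you handled correctly, finishes. If you wish to keep your bipartite picture, this rescaling argument is precisely what exhibits, for each left vertex $j$, a \emph{private} neighbour $i_j\in N(\{j\})\setminus\bigcup_{j'\neq j}N(\{j'\})$, which is of course strictly stronger than Hall's condition.
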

\begin{proof}
Let $\rho \in \rep_{x,\bf x}(\cA)$. For each $1\leq i\leq m$ consider the compression to the 1st and $i+1$st rows and columns
\[
\theta_i = \left[\begin{array}{cc} \rho_{1,1} & \rho_{1,i+1} \\ 0 & \rho_{i+1,i+1}\end{array}\right].
\]
Let $\mathfrak s_{i_1}, \cdots, \mathfrak s_{i_k}$ be those generators not in the kernel of $\rho_{1,i+1}$ and let $t = \max_{j=1}^k \|\theta_i(\mathfrak s_{i_j})\|$.
If $t < 1$ then consider the covariant representation of $(X,\sigma)$ given by $\theta_i(C_0(X))$ and 
\[
T_j = \left[\begin{array}{cc} \rho_{1,1}(\mathfrak s_j) & (2-t)\rho_{1,i+1}(\mathfrak s_j) \\ 0 & \rho_{i+1,i+1}(\mathfrak s_j)\end{array}\right].
\] 
Note that by the triangle inequality this still gives $\|T_j\| \leq 1$. Hence, by the universal property there exists a completely contractive homomorphism $\theta'_i$ of $\cA$ onto $\cT_2$, the upper triangular matrices taking $\mathfrak s_j$ to $T_j$. Moreover, we have that
\[
\theta'_i = \left[\begin{array}{cc} \rho_{1,1} & (2-t)\rho_{1,i+1} \\ 0 & \rho_{i+1,i+1}\end{array}\right].
\]
Now, contractive implies that $|(2-t)\rho_{1,i+1}| \leq 1$ but then $|\rho_{1,i+1}| \leq \frac{1}{2-t} < 1$, a contradiction since it is a norm 1 linear functional.
Hence, there must be at least one $1\leq j\leq k$ such that $\|\theta_i(\mathfrak s_{i_j})\| = 1$ and because $\rho_{1,i}(\mathfrak s_{i_j}) \neq 0$ this implies that $\rho_{1,l}(\mathfrak s_{i_j}) = 0$ for all $l \neq 1, i_j$. 
Therefore, $\sigma_{i_j}(x_i) = x$ and $\theta_k(\mathfrak s_{i_j})$ is diagonal for all $k\neq i$.
\end{proof}

%%%%%%%%%
\begin{theorem}
Let $(X,\sigma)$ be a dynamical system, then $\cA(X,\sigma)$ and $C_0(X) \times_\sigma \mathbb F^n_+$ are completely isometrically isomorphic if and only if the ranges of the $\sigma_i$ are pairwise disjoint. \end{theorem}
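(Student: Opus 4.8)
The plan is to prove the two implications separately, with the forward direction resting on the representation theory developed above.

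\emph{Pairwise disjoint ranges $\Rightarrow$ completely isometrically isomorphic.} I would first show that disjointness forces the generating tuple to be row contractive in \emph{every} covariant representation of $(X,\sigma)$, so that $C_0(X)\times_\sigma\mb F_n^+$ and $\cA(X,\sigma)$ have exactly the same completely contractive representations. Take a covariant representation, which we may assume is nondegenerate, so that $C_0(X)$ acts via a spectral measure $E$ on $X$. Since each $\sigma_i$ is proper its range $\sigma_i(X)$ is closed, and any $f\in C_0(X)$ vanishing on $\sigma_i(X)$ satisfies $f\circ\sigma_i=0$, whence $\pi(f)\mathfrak s_i=\mathfrak s_i\pi(f\circ\sigma_i)=0$; letting $f$ run through an approximate unit of the ideal $C_0(X\setminus\sigma_i(X))$ gives $E(X\setminus\sigma_i(X))\mathfrak s_i=0$, i.e. $\mathfrak s_i=E(\sigma_i(X))\mathfrak s_i$. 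Hence $\mathfrak s_i\mathfrak s_i^*\le E(\sigma_i(X))$, the projections $E(\sigma_i(X))$ are pairwise orthogonal, and therefore $\sum_i\mathfrak s_i\mathfrak s_i^*\le\sum_i E(\sigma_i(X))\le I$, so $[\mathfrak s_1\ \cdots\ \mathfrak s_n]$ is a row contraction. Consequently the canonical completely contractive quotient $q:C_0(X)\times_\sigma\mb F_n^+\to\cA(X,\sigma)$, applied through the universal property of $\cA(X,\sigma)$ to the now-row-contractive generators of the semicrossed product, has a completely contractive inverse on generators, and so is a completely isometric isomorphism.

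\emph{Overlapping ranges $\Rightarrow$ not completely isometrically isomorphic.} Argue by contraposition: suppose $\sigma_a(X)\cap\sigma_b(X)\neq\emptyset$ for some $a\neq b$, and fix $x$ in this intersection together with $p,q\in X$ (possibly equal, possibly equal to $x$) with $\sigma_a(p)=\sigma_b(q)=x$. A routine variant of Example \ref{Example:Colours} (with $m=2$, the two colours $a$ and $b$, and all remaining generators sent to $0$) produces a completely contractive $\rho\in\rep_{x,(p,q)}(C_0(X)\times_\sigma\mb F_n^+)$ with $\rho(f)=\diag(f(x),f(p),f(q))$, $\rho(\mathfrak s_a)=E_{1,2}$, $\rho(\mathfrak s_b)=E_{1,3}$ (matrix units in $M_3$) and $\rho(\mathfrak s_k)=0$ otherwise; a direct computation gives $\|[\rho(\mathfrak s_1)\ \cdots\ \rho(\mathfrak s_n)]\|=\|[E_{1,2}\ E_{1,3}]\|=\sqrt2>1$. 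Since the generating tuple of $\cA(X,\sigma)$ is a row contraction, $\rho$ cannot factor through $\cA(X,\sigma)$; more precisely I expect the tensor-algebra analogue of Proposition \ref{Prop:Colours} to hold, its proof running as before but now exploiting the row-contractivity of $[\mathfrak s_1\ \cdots\ \mathfrak s_n]$ to force $\sum_j|\rho(\mathfrak s_j)_{1,2}|^2+\sum_j|\rho(\mathfrak s_j)_{1,3}|^2\le 1$ for every $\rho:\cA(X,\sigma)\to\cN_3$ with diagonal $(x,p,q)$, so that $\rep_{x,(p,q)}(\cA(X,\sigma))=\emptyset$ while $\rep_{x,(p,q)}(C_0(X)\times_\sigma\mb F_n^+)\neq\emptyset$.

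To conclude, I would invoke the standard rigidity of this subject: a completely isometric isomorphism between the two algebras must carry the distinguished copy of $C_0(X)$ onto the distinguished copy of $C_0(X)$ (after composing with an automorphism coming from a self-homeomorphism of $X$), hence identifies the point-mass (trivial-extension) characters and induces a bijection of the classes $\rep_{\cN_m}$ matching $\rep_{x,\mathbf x}$ with $\rep_{x',\mathbf x'}$ on corresponding tuples; as $\rep_{x,(p,q)}$ is non-empty on one side and empty on the other, no such isomorphism exists. The main obstacle I anticipate is precisely this upgrade --- from ``the canonical quotient $q$ is not a complete isometry'' to ``no abstract completely isometric isomorphism exists'' --- together with proving the tensor-algebra version of Proposition \ref{Prop:Colours}, i.e. the lemma that the row-contractivity of the generating tuple of $\cA(X,\sigma)$ obstructs exactly the colour-separating representations that the semicrossed product always admits.
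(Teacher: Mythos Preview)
Your approach is essentially the paper's. For the easy direction the paper argues inside the semicrossed product rather than in an arbitrary representation: choosing $f_i\in C_0(X)$ with $f_if_j=0$ for $i\neq j$ and $f_i\equiv 1$ on $\sigma_i(X)$, one gets $\fs_j^*\fs_i=\fs_j^*\overline{f_j}f_i\fs_i=0$, hence $[\fs_1\ \cdots\ \fs_n]$ is already a row contraction and the canonical quotient onto $\cA(X,\sigma)$ has a completely contractive inverse. Your spectral--measure argument is a harmless variant of the same computation.

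For the hard direction your outline matches the paper's, but two points need attention. First, you allow $p=q=x$; when the only witnesses to overlapping range are a common fixed point, the perturbation step below fails (because $\rho_{2,2}(\fs_{i_j})$ need not vanish), and the paper disposes of this case separately by invoking \cite[Corollary~3.11]{DavKat2}: a point fixed by two maps makes the character spaces of $\cA(X,\sigma)$ and $C_0(X)\times_\sigma\mb F_n^+$ differ, so the algebras are not even algebraically isomorphic. Second, the inequality $\sum_j|\rho_{1,2}(\fs_j)|^2+\sum_j|\rho_{1,3}(\fs_j)|^2\le 1$ is true but does not by itself yield $\rep_{x,(p,q)}(\cA(X,\sigma))=\emptyset$, since $\|\rho_{1,2}\|=1$ is a condition on the whole algebra, not just on generators. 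The paper's argument (exactly the ``tensor--algebra analogue of Proposition~\ref{Prop:Colours}'' you anticipate) is: assuming $x_1\neq x$, let $i_1,\dots,i_k$ be the indices with $\rho_{1,2}(\fs_{i_j})\neq 0$, so $\rho_{2,2}(\fs_{i_j})=0$; if $t:=\|[\theta_1(\fs_{i_1})\ \cdots\ \theta_1(\fs_{i_k})]\|<1$ then the rescaled homomorphism with $(1,2)$-entry $(1+t/\sqrt{k})\rho_{1,2}$ still has row--contractive generators, hence is completely contractive by universality, forcing $\|\rho_{1,2}\|<1$, a contradiction. Thus $t=1$, and then row contractivity of $[\rho(\fs_1)\ \cdots\ \rho(\fs_n)]$ leaves no room in the first row for any $\rho_{1,3}(\fs_i)$, so $\rho_{1,3}$ vanishes on all generators and hence identically, contradicting $\|\rho_{1,3}\|=1$. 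Your ``upgrade'' from the canonical quotient to an abstract completely isometric isomorphism via the induced bijection on characters is precisely what the paper does.
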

\begin{proof}
Suppose that there are points $x, x_1, x_2\in X$ such that $\sigma_1(x_1) = \sigma_2(x_2) = x$. If $x_1 = x_2 = x$ then there is a point fixed by more than one map and Davidson and Katsoulis have shown in \cite[Corollary 3.11]{DavKat2} that $\cA(X,\sigma)$ and $C_0(X) \times_\sigma \mb F_n^+$ are not even algebraically isomorphic due to their differing character spaces.

Assume then that $x_1 \neq x$. If ${\bf x} = (x_1, x_2)$ Example \ref{Example:Colours} says that $\rep_{x, \bf x}(C_0(X) \times_\sigma \mb F_n^+)$ is non-empty. Again we turn to \cite{DavKat2} to see that a completely isometric isomorphism $\gamma$ between these algebras induces a bijection $\gamma_c$ between the character spaces that corresponds to the homeomorphism $\gamma_s$ that arises from $\gamma$ being a $*$-automorphism of $C_0(X)$. Without loss of generality we may take $\gamma_s$ to be the identity since conjugacy among dynamical systems implies completely isometrically isomorphic among the tensor algebras. Hence, there must exist a $\rho \in \rep_{x,\bf x}(\cA(X,\sigma))$ when the algebras are c.i.i.

In light of this, consider, as in the proof of Proposition \ref{Prop:Colours}, the nest representation $\theta_1$ given by the compression to the upper left 2x2 corner of $\rho$. Let $i_1,\cdots, i_k$ be the indices such that $\rho_{1,2}(\mathfrak s_{i_j}) \neq 0$, implying that $\sigma_{i_j}(x_1) = x$ and, since $x_1\neq x$, that $\rho_{2,2}(\mathfrak s_{i_j}) = 0$ (since it is a character and $\sigma_{i_j}$ does not fix $x_1$). Let $t = \|[\theta_1(\mathfrak s_{i_1}) \cdots \theta_1(\mathfrak s_{i_k})]\|$. Because the generators of $\cA(X,\sigma)$ are row contractive and $\theta_1$ is c.c. then $t\leq 1$. If $t< 1$ then consider the slight adjustment to $\theta_1$ given by
\[
\theta'_1 = \theta_1 + \left[\begin{array}{cc} 0 & (\frac{t}{\sqrt k}) \rho_{1,2} \\ 0 & 0\end{array}\right].
\]
This map, $\theta'_1$, is a homomorphism with $\|[\theta'_1(\mathfrak s_1) \cdots \theta'_1(\mathfrak s_n)]\| \leq 1$ which implies by the universal property of the tensor algebra that $\theta'_1$ is completely contractive. But then $\|(1 + \frac{t}{\sqrt k}) \rho_{1,2}\| \leq 1$ which implies $\|\rho_{1,2}\| < 1$, a contradiction. 
Assume then, that 
\[
\|[ \rho_{1,1}(\mathfrak s_{i_1}) \ \rho_{1,2}(\mathfrak s_{i_1}) \cdots   \rho_{1,1}(\mathfrak s_{i_k}) \ \rho_{1,2}(\mathfrak s_{i_k}) ] \|
=  \|[\theta_1(\mathfrak s_{i_1}) \cdots \theta_1(\mathfrak s_{i_k})]\| = 1. 
\]
However, $[\rho(\mathfrak s_1) \cdots \rho(\mathfrak s_n)]$ is a row contraction as well thus $\rho_{1,3}(\mathfrak s_i) = 0$ for $1\leq i\leq n$, contradicting the fact that $\rho_{1,3}$ is a norm 1 functional. Therefore, when the ranges of the $\sigma_i$ overlap then the tensor and semicrossed product algebras cannot be completely isometrically isomorphic.

Conversely, suppose that the ranges of the $\sigma_i$ are pairwise disjoint. Because $\sigma_i$ is proper and continuous then $\sigma_i(X)$ is closed in $X$. Hence, there exists $f_1,\cdots, f_n \in C_0(X)$ such that $f_i f_j = 0$ for $i\neq j$ and $f_i(x) = 1, \forall x\in \sigma_i(X)$.
If $\mathfrak s_1,\cdots, \mathfrak s_n$ are the generators of $C_0(X) \times_\sigma \mb F_n^+$ then for $i\neq j$
\[
\mathfrak s^*_j \mathfrak s_i \ = \ (f_j\circ \sigma_j) \mathfrak s_j^* \mathfrak s_i (f_i\circ\sigma_i)
\ = \ \mathfrak s_j^* f_j f_i \mathfrak s_i \ = \ 0.
\]
Thus, $[\mathfrak s_1 \cdots \mathfrak s_n]$ is a row contraction which implies that there exists a completely contractive homomorphism from $\cA(X,\sigma)$ onto $C_0(X) \times_\sigma \mb F_n^+$ taking generators to generators. This map is the inverse of the canonical quotient from the semicrossed product onto the Tensor algebra and therefore the two algebras are completely isometrically isomorphic.
\end{proof}

%%%%%

The following example is due to Duncan \cite{Duncan2}, it illustrates that there are topologically isomorphic tensor and semicrossed product algebras that are not completely isometrically isomorphic.

\begin{example}
Consider the four point set $X = \{1,2,3,4\}$ and the pair of maps 
\[
\sigma_1(1) = 2, \sigma_1(2) = \sigma_1(3) = \sigma_1(4) = 3 \ \  {\rm and} \ \ \sigma_2(1) = 2, \sigma_2(2) = \sigma_2(3) = \sigma_2(4) = 4.
\]
Thus, $\cA(X,\sigma)$ and $C(X) \times_\sigma \mb F_2^+$ are isomorphic as Banach algebras simply by sending the generators to the generators. Indeed, by the previous theorem we already know that for $X' = \{2,3,4\}$, $\cA(X', \sigma) = C(X') \times_\sigma \mb F_2^+$. Now if $U_1, U_2$ are the generators of $A(\mb D) * A(\mb D)$ and $L_1,L_2$ are the generators of $\cA_2$, then 
\[
\left[\begin{array}{cc}  I & \\ &0   \end{array}\right] \mapsto \left[\begin{array}{cc}  I & \\ & 0  \end{array}\right], \ \ \left[\begin{array}{cc} 0 &  \\ & I   \end{array}\right] \mapsto \left[\begin{array}{cc}  0 & \\ & I   \end{array}\right]
\]
\[
{\rm and} \ \ \left[\begin{array}{cc}  0& U_i \\ & 0  \end{array}\right] \mapsto \left[\begin{array}{cc}  0& L_i \\ & 0   \end{array}\right], i=1,2,
\]
is an isomorphism between the algebras they generate.

It should be noted that the two maps have overlapping range so the two algebras are not (completely) isometrically isomorphic.
\end{example}

%%%%%%%%%%%%%%%%%%%%%%%%%%%%%%%%%%%%
%%%%%%%%%%%%%%%%%%%%%%%%%%%%%%%%%%%%
%%%%%%%%%%%%%%%%%%%%%%%%%%%%%%%%%%%%

% The bibliography
%\begin{thebibliography}{9}

\begin{bibdiv}
\begin{biblist}

%\bibitem{Ando} \textit{T.~And\^{o}}, On a pair of commutative contractions, Acta Sci. Math. (Szeged) {\bf 24} (1963), 88--90.

\bib{Ando}{article}{
   author={And{\^o}, T.},
   title={On a pair of commutative contractions},
   journal={Acta Sci. Math. (Szeged)},
   volume={24},
   date={1963},
   pages={88--90},
   issn={0001-6969},
   %review={\MR{0155193 (27 \#5132)}},
}

%\bibitem{Arv1} \textit{W.~Arveson}, Operator algebras and measure preserving automorphisms, Acta Math. {\bf 118} (1967), 95--109.

\bib{Arv1}{article}{
   author={Arveson, W.},
   title={Operator algebras and measure preserving automorphisms},
   journal={Acta Math.},
   volume={118},
   date={1967},
   pages={95--109},
   issn={0001-5962},
   %review={\MR{0210866 (35 \#1751)}},
}

%\bibitem{Arv2} \textit{W.~Arveson}, Subalgebras of C$^*$-algebras III, Acta Math. {\bf 181} (1998), 159--228.

\bib{Arv2}{article}{
   author={Arveson, W.},
   title={Subalgebras of $C\sp *$-algebras. III. Multivariable operator
   theory},
   journal={Acta Math.},
   volume={181},
   date={1998},
   number={2},
   pages={159--228},
   issn={0001-5962},
   %review={\MR{1668582 (2000e:47013)}},
   %doi={10.1007/BF02392585},
}

%\bibitem{ArvJos} \textit{W.~Arveson} and \textit{K.~Josephson}, Operator algebras and measure preserving automorphisms II, J. Funct. Anal. {\bf 4} (1969), 100--134. 

\bib{ArvJos}{article}{
   author={Arveson, W.},
   author={Josephson, K.},
   title={Operator algebras and measure preserving automorphisms. II},
   journal={J. Functional Analysis},
   volume={4},
   date={1969},
   pages={100--134},
   %review={\MR{0250081 (40 \#3322)}},
}

%\bibitem{Blech} \textit{D.~Blecher}, Factorizations in universal operator spaces and algebras, Rocky Mountain J. Math. {\bf 27} (1997), 151--167.

\bib{Blech}{article}{
   author={Blecher, D.},
   title={Factorizations in universal operator spaces and algebras},
   journal={Rocky Mountain J. Math.},
   volume={27},
   date={1997},
   number={1},
   pages={151--167},
   issn={0035-7596},
   %review={\MR{1453096 (98d:46064)}},
   %doi={10.1216/rmjm/1181071954},
}

%\bibitem{BlechPaul} \textit{D.~Blecher} and \textit{V.~Paulsen}, Explicit construction of universal operator algebras and applications to polynomial factorization, Proc. Amer. Math. Soc. {\bf 112} (1991), 839--850.

\bib{BlechPaul}{article}{
   author={Blecher, D.},
   author={Paulsen, V.},
   title={Explicit construction of universal operator algebras and
   applications to polynomial factorization},
   journal={Proc. Amer. Math. Soc.},
   volume={112},
   date={1991},
   number={3},
   pages={839--850},
   issn={0002-9939},
   %review={\MR{1049839 (91j:46093)}},
   %doi={10.2307/2048708},
}

%\bibitem{DavFulKak} \textit{K.~Davidson}, \textit{A.~Fuller} and \textit{E.~Kakariadis}, Semicrossed products of operator algebras: a survey, New York J. of Math., to appear.

\bib{DavFulKak}{article}{
	author={Davidson, K.},
	author={Fuller, A.},
	author={Kakariadis, E.},
	title={Semicrossed products of operator algebras: a survey},
	journal={New York J. of Math.}
	status={to appear},
}

%\bibitem{DavFulKak2} \textit{K.~Davidson}, \textit{A.~Fuller} and \textit{E.~Kakariadis}, Semicrossed products of operator algebras by semigroups, preprint, arXiv:1404.1907. 

\bib{DavFulKak2}{article}{
	author={Davidson, K.},
	author={Fuller, A.},
	author={Kakariadis, E.},
	title={Semicrossed products of operator algebras by semigroups},
	journal={Mem. Amer. Math. Soc.},
	status={to appear},
}

%\bibitem{DavKat1} \textit{K.~Davidson} and \textit{E.~Katsoulis}, Isomorphisms between topological conjugacy algebras, J. Reine Angew. Math. {\bf 621} (2008), 29--51.

\bib{DavKat1}{article}{
   author={Davidson, K.},
   author={Katsoulis, E.},
   title={Isomorphisms between topological conjugacy algebras},
   journal={J. Reine Angew. Math.},
   volume={621},
   date={2008},
   pages={29--51},
   issn={0075-4102},
   %review={\MR{2431249 (2010b:47226)}},
   %doi={10.1515/CRELLE.2008.057},
}

%\bibitem{DavKat2} \textit{K.~Davidson} and \textit{E.~Katsoulis}, Operator algebras for multivariable dynamics, Mem. Amer. Math. Soc. {\bf 209} (2011), no. 982, viii+53 pp.

\bib{DavKat2}{article}{
   author={Davidson, K.},
   author={Katsoulis, E.},
   title={Operator algebras for multivariable dynamics},
   journal={Mem. Amer. Math. Soc.},
   volume={209},
   date={2011},
   number={982},
   pages={viii+53},
   issn={0065-9266},
   isbn={978-0-8218-5302-3},
   %review={\MR{2752983 (2012c:47194)}},
   %doi={10.1090/S0065-9266-10-00615-0},
}

%\bibitem{DavPitts} \textit{K.~Davidson} and \textit{D.~Pitts}, The algebraic structure of non-commutative analytic Toeplitz algebras, Math. Ann. {\bf 311} (1998), 275--303.

\bib{DavPitts}{article}{
   author={Davidson, K.},
   author={Pitts, D.},
   title={The algebraic structure of non-commutative analytic Toeplitz
   algebras},
   journal={Math. Ann.},
   volume={311},
   date={1998},
   number={2},
   pages={275--303},
   issn={0025-5831},
   %review={\MR{1625750 (2001c:47082)}},
   %doi={10.1007/s002080050188},
}

%\bibitem{Duncan0} \textit{B.~Duncan}, C$^*$-envelopes of universal free products and semicrossed products for multivariable dynamics, Indiana Univ. Math. J. {\bf 57} (2008), no. 4, 1781--1788.

\bib{Duncan0}{article}{
   author={Duncan, B.},
   title={$C\sp *$-envelopes of universal free products and semicrossed
   products for multivariable dynamics},
   journal={Indiana Univ. Math. J.},
   volume={57},
   date={2008},
   number={4},
   pages={1781--1788},
   issn={0022-2518},
   %review={\MR{2440881 (2011d:46121)}},
   %doi={10.1512/iumj.2008.57.3273},
}

%\bibitem{Duncan1} \textit{B.~Duncan}, Certain free products of graph operator algebras, J. Math. Analysis Appl. {\bf 364} (2010), 534--543.

\bib{Duncan1}{article}{
   author={Duncan, B.},
   title={Certain free products of graph operator algebras},
   journal={J. Math. Anal. Appl.},
   volume={364},
   date={2010},
   number={2},
   pages={534--543},
   issn={0022-247X},
   %review={\MR{2576204 (2011h:46089)}},
   %doi={10.1016/j.jmaa.2009.11.023},
}

%\bibitem{Duncan2} \textit{B.~Duncan}, Graph theoretic invariants for operator algebras associated to topological dynamics, to appear Rocky Mountain J. Math. (2014).

\bib{Duncan2}{article}{
	author={Duncan, B.},
	title={Graph theoretic invariants for operator algebras associated to topological dynamics},
	journal={Rocky Mountain J. Math.},
	status={to appear},
}

\bib{DuncanPeters}{article}{
	author={Duncan, B.},
	author={Peters, J.},
	title={Operator algebras and representation from commuting semigroup actions},
	status={preprint}
	eprint={arXiv:1008.2244 [math.OA]},
}

%\bibitem{FowlerRaeburn} \textit{N.~Fowler} and \textit{I.~Raeburn}, The Toeplitz algebra of a Hilbert bimodule, Indiana Univ. Math. J. {\bf 48} (1999), 155--181.

\bib{FowlerRaeburn}{article}{
   author={Fowler, N.},
   author={Raeburn, I.},
   title={The Toeplitz algebra of a Hilbert bimodule},
   journal={Indiana Univ. Math. J.},
   volume={48},
   date={1999},
   number={1},
   pages={155--181},
   issn={0022-2518},
   %review={\MR{1722197 (2001b:46093)}},
   %doi={10.1512/iumj.1999.48.1639},
}

%\bibitem{HadHoo} \textit{D.~Hadwin} and \textit{T.~Hoover}, Operator algebras and the conjugacy of transformations, J. Funct. Anal. {\bf 77} (1988), 112--122.

\bib{HadHoo}{article}{
   author={Hadwin, D.},
   author={Hoover, T.},
   title={Operator algebras and the conjugacy of transformations},
   journal={J. Funct. Anal.},
   volume={77},
   date={1988},
   number={1},
   pages={112--122},
   issn={0022-1236},
   %review={\MR{930394 (89e:47069)}},
   %doi={10.1016/0022-1236(88)90080-8},
}

%\bibitem{KakKats} \textit{E.~Kakariadis} and \textit{E.~Katsoulis}, Isomorphism invariants for multivariable C$^*$-dynamics, J. Noncommutative Geometry {\bf 8} (2014), no. 3, 771--787.

\bib{KakKats}{article}{
   author={Kakariadis, E.},
   author={Katsoulis, E.},
   title={Isomorphism invariants for multivariable ${\rm C}\sp
   \ast$-dynamics},
   journal={J. Noncommut. Geom.},
   volume={8},
   date={2014},
   number={3},
   pages={771--787},
   issn={1661-6952},
   %review={\MR{3261601}},
   %doi={10.4171/JNCG/170},
}

\bib{KatsKribs}{article}{
   author={Katsoulis, E.},
   author={Kribs, D.},
   title={Isomorphisms of algebras associated with directed graphs},
   journal={Math. Ann.},
   volume={330},
   date={2004},
   number={4},
   pages={709--728},
   issn={0025-5831},
   %review={\MR{2102309 (2005i:47114)}},
   %doi={10.1007/s00208-004-0566-6},
}

%\bibitem{Lig} \textit{E.~Ligocka}, On proper holomorphic and biholomorphic mappings between product domains, Bull. Acad. Polon. Sci., Ser. Sci. Math. {\bf 28} (1980), 319--323.

\bib{Lig}{article}{
   author={Ligocka, E.},
   title={On proper holomorphic and biholomorphic mappings between product
   domains},
   language={English, with Russian summary},
   journal={Bull. Acad. Polon. Sci. S\'er. Sci. Math.},
   volume={28},
   date={1980},
   number={7-8},
   pages={319--323 (1981)},
   issn={0137-639X},
   %review={\MR{628046 (82i:32051)}},
}

%\bibitem{MuhlySolel} \textit{P.~Muhly} and \textit{B.~Solel}, Tensor algebras over C$^*$-correspondences: representations, dilations and C$^*$-envelopes, J. Funct. Anal. {\bf 158} (1998), 389--457.

\bib{MuhlySolel}{article}{
   author={Muhly, P.},
   author={Solel, B.},
   title={Tensor algebras over $C\sp *$-correspondences: representations,
   dilations, and $C\sp *$-envelopes},
   journal={J. Funct. Anal.},
   volume={158},
   date={1998},
   number={2},
   pages={389--457},
   issn={0022-1236},
   %review={\MR{1648483 (99j:46066)}},
   %doi={10.1006/jfan.1998.3294},
}

%\bibitem{Parrott} \textit{S.~Parrott}, Unitary dilations for commuting contractions, Pacific J. Math. {\bf 34} (1970), 481--490.

\bib{Parrott}{article}{
   author={Parrott, S.},
   title={Unitary dilations for commuting contractions},
   journal={Pacific J. Math.},
   volume={34},
   date={1970},
   pages={481--490},
   issn={0030-8730},
   %review={\MR{0268710 (42 \#3607)}},
}

%\bibitem{Peters} \textit{J.~Peters}, Semicrossed products of C$^*$-algebras, J. Funct. Anal. {\bf 59} (1984), 498--534.

\bib{Peters}{article}{
   author={Peters, J.},
   title={Semicrossed products of $C\sp \ast$-algebras},
   journal={J. Funct. Anal.},
   volume={59},
   date={1984},
   number={3},
   pages={498--534},
   issn={0022-1236},
   %review={\MR{769379 (86e:46063)}},
   %doi={10.1016/0022-1236(84)90063-6},
}

%\bibitem{Plastiras} \textit{J.~Plastiras}, C$^*$-algebras isomorphic after tensoring, Proc. Amer. Math. Soc. {\bf 66} (1977), no. 2, 276--278.

\bib{Plastiras}{article}{
   author={Plastiras, J.},
   title={$C\sp*$-algebras isomorphic after tensoring},
   journal={Proc. Amer. Math. Soc.},
   volume={66},
   date={1977},
   number={2},
   pages={276--278},
   issn={0002-9939},
   %review={\MR{0461158 (57 \#1143)}},
}

%\bibitem{Pop0} \textit{G.~Popescu}, Von Neumann inequality for $(B(\cH)^n)_1$, Math. Scand. {\bf 68} (1991), 292--304.

\bib{Pop0}{article}{
   author={Popescu, G.},
   title={von Neumann inequality for $(B({\scr H})\sp n)\sb 1$},
   journal={Math. Scand.},
   volume={68},
   date={1991},
   number={2},
   pages={292--304},
   issn={0025-5521},
   %review={\MR{1129595 (92k:47073)}},
}

%\bibitem{Pop1} \textit{G.~Popescu}, Noncommutative joint dilations and free product operator algebras, Pacific J. of Math. {\bf 186} (1998), no. 1, 111--140.

\bib{Pop1}{article}{
   author={Popescu, G.},
   title={Noncommutative joint dilations and free product operator algebras},
   journal={Pacific J. Math.},
   volume={186},
   date={1998},
   number={1},
   pages={111--140},
   issn={0030-8730},
   %review={\MR{1665059 (2000k:46082)}},
   %doi={10.2140/pjm.1998.186.111},
}

%\bibitem{Pop2} \textit{G.~Popescu}, Universal operator algebras associated to contractive sequences of non-commuting operators, J. London Math. Soc. {\bf 58} (1998), no. 2, 467--479.

\bib{Pop2}{article}{
   author={Popescu, G.},
   title={Universal operator algebras associated to contractive sequences of
   non-commuting operators},
   journal={J. London Math. Soc. (2)},
   volume={58},
   date={1998},
   number={2},
   pages={469--479},
   issn={0024-6107},
   %review={\MR{1668144 (99m:47054)}},
   %doi={10.1112/S0024610798006152},
}

%\bibitem{Pop3} \textit{G.~Popescu}, Free holomorphic automorphisms of the unit ball of B(H)$^n$, J. Reine Angew. Math. {\bf 638} (2010), 119--168.

\bib{Pop3}{article}{
   author={Popescu, G.},
   title={Free holomorphic automorphisms of the unit ball of $B(\scr H)\sp
   n$},
   journal={J. Reine Angew. Math.},
   volume={638},
   date={2010},
   pages={119--168},
   issn={0075-4102},
   %review={\MR{2595338 (2012c:46161)}},
   %doi={10.1515/CRELLE.2010.005},
}

%\bibitem{Raeburn} \textit{I.~Raeburn}, Graph algebras, CBMS Regional Conf. Series in Math. 103, American Math. Soc., Providence, RI, 2005.

\bib{Raeburn}{book}{
   author={Raeburn, I.},
   title={Graph algebras},
   series={CBMS Regional Conference Series in Mathematics},
   volume={103},
   publisher={Published for the Conference Board of the Mathematical
   Sciences, Washington, DC; by the American Mathematical Society,
   Providence, RI},
   date={2005},
   pages={vi+113},
   isbn={0-8218-3660-9},
   %review={\MR{2135030 (2005k:46141)}},
}

\bib{Ramsey}{thesis}{
	author={Ramsey, C.},
	title={Algebraic characterization of multivariable dynamics},
	organization={University of Waterloo},
	type={Master's Thesis},
	date={2009},
}

%\bibitem{Rudin} \textit{W.~Rudin}, Function theory in polydiscs, W. A. Benjamin Inc., New York, NY, 1969.

\bib{Rudin}{book}{
   author={Rudin, W.},
   title={Function theory in polydiscs},
   publisher={W. A. Benjamin, Inc., New York-Amsterdam},
   date={1969},
   pages={vii+188},
   %review={\MR{0255841 (41 \#501)}},
}

%\bibitem{Rudin2} \textit{W.~Rudin}, Function theory in the unit ball of $\mb C^n$, Springer Verlag, New York, NY, 1980.

\bib{Rudin2}{book}{
   author={Rudin, W.},
   title={Function theory in the unit ball of ${\bf C}\sp{n}$},
   series={Grundlehren der Mathematischen Wissenschaften [Fundamental
   Principles of Mathematical Science]},
   volume={241},
   publisher={Springer-Verlag, New York-Berlin},
   date={1980},
   pages={xiii+436},
   isbn={0-387-90514-6},
   %review={\MR{601594 (82i:32002)}},
}

\bib{Solel}{article}{
   author={Solel, B.},
   title={You can see the arrows in a quiver operator algebra},
   journal={J. Aust. Math. Soc.},
   volume={77},
   date={2004},
   number={1},
   pages={111--122},
   issn={1446-7887},
   %review={\MR{2069028 (2005d:47115)}},
   %doi={10.1017/S1446788700010181},
}

%\bibitem{NagyFoias} \textit{B.~Sz.-Nagy}, \textit{C.~Foias}, \textit{H.~Bercovic}, and \textit{L.~K{\rm \'e}rchy}, Harmonic analysis of operators on Hilbert space, Universitext, Springer, New York, 2010.

\bib{NagyFoias}{book}{
   author={Sz.-Nagy, B.},
   author={Foias, C.},
   author={Bercovici, H.},
   author={K{\'e}rchy, L.},
   title={Harmonic analysis of operators on Hilbert space},
   series={Universitext},
   edition={2},
   edition={Revised and enlarged edition},
   publisher={Springer, New York},
   date={2010},
   pages={xiv+474},
   isbn={978-1-4419-6093-1},
   %review={\MR{2760647 (2012b:47001)}},
   %doi={10.1007/978-1-4419-6094-8},
}

%\bibitem{Tsyganov} \textit{Sh.~Tsyganov}, Biholomorphic maps of the direct product of domains, Mat. Zametki, {\bf 41} (1987), no. 6, 824--828. (Russian and English translation)

\bib{Tsyganov}{article}{
   author={Tsyganov, Sh.},
   title={Biholomorphic mappings of the direct product of domains},
   language={Russian},
   journal={Mat. Zametki},
   volume={41},
   date={1987},
   number={6},
   pages={824--828, 890},
   issn={0025-567X},
   %review={\MR{904250 (89h:32053)}},
}

%\bibitem{Varopoulos} \textit{N.~Th.~Varopoulos}, On an inequality of von Neumann and an application of the metric theory of tensor products to operators theory,  J. Funct. Anal. {\bf 16} (1974), 83--100.

\bib{Varopoulos}{article}{
   author={Varopoulos, N. Th.},
   title={On an inequality of von Neumann and an application of the metric
   theory of tensor products to operators theory},
   journal={J. Functional Analysis},
   volume={16},
   date={1974},
   pages={83--100},
   %review={\MR{0355642 (50 \#8116)}},
}

%\bibitem{Voic} \textit{D.~Voiculescu}, Symmetries of some reduced free product C$^*$-algebras, Lect. Notes Math. {\bf 1132}, 556--588, Springer Verlag, New York, 1985.

\bib{Voic}{article}{
   author={Voiculescu, D.},
   title={Symmetries of some reduced free product $C\sp \ast$-algebras},
   conference={
      title={Operator algebras and their connections with topology and
      ergodic theory},
      address={Bu\c steni},
      date={1983},
   },
   book={
      series={Lecture Notes in Math.},
      volume={1132},
      publisher={Springer, Berlin},
   },
   date={1985},
   pages={556--588},
   %review={\MR{799593 (87d:46075)}},
   %doi={10.1007/BFb0074909},
}

%\end{thebibliography}
\end{biblist}
\end{bibdiv}

\end{document}